\newtheorem{theorem}{Theorem}[section]
\newtheorem{corollary}[theorem]{Corollary}
\newtheorem{lemma}[theorem]{Lemma}
\newtheorem{definition}[theorem]{Definition}
\begin{document}
\author[Korobenko]{Lyudmila Korobenko}
\address{University of Calgary\\
Calgary, Alberta\\
lkoroben@ucalgary.ca}
\author[Rios]{Cristian Rios}
\address{University of Calgary\\
Calgary, Alberta\\
crios@ucalgary.ca}
\title[Hypoellipticity of Infinitely Degenerate Operators]{Hypoellipticity
of a Class of Infinitely Degenerate Second Order Operators and Systems}
\keywords{hypoellipticity, subellipticity, infinite vanishing, loss of
derivatives}
\subjclass[2010]{35H10, 35H20, 35S05, 35G05, 35B65, 35A18}
\thanks{The second author is supported by the Natural Sciences and
Engineering Research Council of Canada.}

\begin{abstract}
In this paper we establish a hypoellipticity result for second order linear
operators comprised by a linear combination, with infinite vanishing
coefficients, of subelliptic operators in separate spaces. This generalizes
previous known results.
\end{abstract}

\maketitle

\section{Introduction}

An operator $L$ acting on $\mathcal{D}^{\prime }\left( \mathbb{R}^{n}\right) 
$, the set of distributions, is said to be hypoelliptic if whenever $u\in 
\mathcal{D}^{\prime }\left( \mathbb{R}^{n}\right) $ and $Lu\in C^{\infty
}\left( \mathbb{R}^{n}\right) $ then $u\in C^{\infty }\left( \mathbb{R}%
^{n}\right) $. A sufficient condition for an operator to be hypoelliptic is
subellipticity: $L$ is subelliptic if there exists some $\varepsilon ,C>0$
such \ that 
\begin{equation}
||u||_{\varepsilon }^{2}\leq C\left( \left\vert \left( Lu,u\right)
\right\vert +\left\Vert u\right\Vert ^{2}\right) \qquad \text{for all }u\in
C_{0}^{\infty }\left( \mathbb{R}^{n}\right) ;  \label{subel}
\end{equation}%
$\left\Vert \cdot \right\Vert _{s}$ denotes the Sobolev norm of order $s\in 
\mathbb{R}$ (see Definition \ref{def-Sobolev-norm} below), and $\left\Vert
\cdot \right\Vert =\left\Vert \cdot \right\Vert _{0}$ is the $L^{2}$ norm in 
$\mathbb{R}^{n}$. Some necessary and sufficient conditions for
subellipticity have been established in terms of associated vector fields by
H\"{o}rmander in his pivotal paper \cite{Hor}; and in terms of subunit
metric balls by Fefferman and Phong \cite{Fef}. Subelliptic operators may
have ellipticity vanishing locally to at most a finite order.

An operator with infinitely vanishing\ ellipticity is not subelliptic, such
operators do not satisfy the H\"{o}rmander condition. The first known
hypoellipticity results for infinitely degenerate operators are due to Fedi%
\u{\i} \cite{Fedii}, where the simplest example is $P=\partial
_{x}^{2}+k\left( x\right) \partial _{y}^{2}$ with $k\left( x\right) >0$ for $%
x\neq 0$, $\sqrt{k}$ is smooth and it is allowed to vanish to any order at \
the origin. A different criterion for hypoellipticity was developed by
Morimoto in Section 2 of \cite{Mor}, where he generalizes the seminal
techniques from \cite{Fedii}. Other sufficient conditions for
hypoellipticity where obtained by the same author in \cite{Mor1}, where the
left hand side on the subellipticity condition (\ref{subel}) is replaced by
logarithmic Sobolev norms.

The hypoellipticity of semilinear operators with principal part satisfying
the H\"{o}rmander condition was established in \cite{Tri}. Certain
quasilinear operators with infinitely vanishing ellipticity have been
studied in two dimensions by Sawyer and Wheeden motivated by applications to
Monge-Amp\`{e}re equations \cite{Saw0, Saw}; Rios et al extended these
results to a wider class of infinitely vanishing quasilinear equations in
higher dimensions \cite{Rios2, Rios}. However, hypoellipticity was only
obtained for continuous solutions. Previous nonlinear hypoellipticity
results had also required extra hypothesis on solutions: in \cite{Xu}
quasilinear subelliptic systems are considered, and hypoellipticity is
obtained for continuous solutions; in \cite{Mor3, Mor4} hypoellipticity is
obtained for bounded solutions of certain infinitely degenerate quasilinear
equations.

Returning to the linear case, Kusuoka and Stroock extended Fedi\u{\i}'s two
dimensional result to the case when only $k$ is required to be smooth and it
may vanish at any order at the origin \cite{Kus}. However, in \cite{Kus} the
authors also showed that in higher dimensions hypoellipticity may fail for
certain linear operators depending on the vanishing ellipticity order; they
in fact obtained a quite spectacular characterization of hypoellipticity for 
$Q=\partial _{x}^{2}+k\left( x\right) \partial _{y}^{2}+\partial _{z}^{2}$: $%
Q$ is hypoelliptic if and only if $\lim_{x\rightarrow 0}x\log k\left(
x\right) =0$. Their proofs rely on the Malliavin Calculus. In \cite{Mor},
Morimoto, using non-probabilistic methods, extended Kusuoka and Strook's
result to pseudodifferential operators of the form $R=a\left(
x,y,D_{x}\right) +g\left( x^{\prime }\right) b\left( x,y,D_{y}\right) $ in $%
\mathbb{R}^{n}=\mathbb{R}_{x}^{n_{1}}\times \mathbb{R}_{y}^{n_{2}}$ , where $%
a$ and $b$ are strongly elliptic pseudodifferential operators, $x=\left(
x^{\prime },x^{\prime \prime }\right) \in \mathbb{R}^{n_{1}}=\mathbb{R}%
^{d_{1}}\times \mathbb{R}^{d_{2}}$, $g$ is smooth, $g\left( x^{\prime
}\right) >0$ for $x^{\prime }\neq 0$ and $\lim_{x^{\prime }\rightarrow
0}\left\vert x^{\prime }\right\vert \left\vert \log g\left( x^{\prime
}\right) \right\vert =0$.

In fact, Fedi\u{\i}'s two dimensional result does extend to operators in
higher dimensions regardless of the order of vanishing if their structure is
similar that of the two dimensional operator $P$. Indeed, $P$ may be written
in the form $P=L_{1}+k\left( x\right) L_{2}$ in $\mathbb{R}\times \mathbb{R}$%
, where $L_{1}=\partial _{x}^{2}$ and $L_{2}=\partial _{y}^{2}$ are one
dimensional elliptic operators (notice that the coefficient $k$ does not
depend on the second variable). With this perspective, Morimoto generalized
Fedi\u{\i}'s result to pseudodifferential operators of the from $R=a\left(
x,y,D_{x}\right) +g\left( x\right) b\left( x,y,D_{y}\right) $ in $\mathbb{R}%
^{n}=\mathbb{R}_{x}^{n_{1}}\times \mathbb{R}_{y}^{n_{2}}$ , where $a$ and $b$
are strongly elliptic pseudodifferential operators, $g\left( x\right) >0$
for $x\neq 0$, $g$ is smooth and it can vanish at any order at the origin 
\cite{Mor0}. Over a decade later Kohn \cite{Kohn} proved the hypoellipticity
of $R$ in the case that $a\left( x,D_{x}\right) =L_{1}$ and $b\left(
y,D_{y}\right) =L_{2}$ are only assumed to be differential operators 
\begin{equation}
L_{k}=-\sum\limits_{i,j=1}^{n_{k}}a_{ij}^{k}\left( x^{k}\right) \frac{%
\partial ^{2}}{\partial x_{i}^{k}\partial x_{j}^{k}}+\sum%
\limits_{i=1}^{n_{k}}b_{i}^{k}\left( x^{k}\right) \frac{\partial }{\partial
x_{i}^{k}}+c^{k}\left( x^{k}\right) ,  \label{fam}
\end{equation}%
which are subelliptic in $\mathbb{R}^{n_{k}}$, $k=1,2$, respectively.

The purpose of this paper is to generalize Kohn's result to an arbitrary
finite number of subelliptic operators in separate variables, extending the
Fedi\u{\i}'s type structure modeled in \cite{Mor0, Kohn}. We also obtain
hypoellipticity for systems of linear operators with a similar infinite
degeneracy.

\begin{definition}[Subelliptic operator]
Let $L$ be an operator defined by 
\begin{equation}
L=-\sum\limits_{1}^{n}a_{ij}(x)\frac{\partial ^{2}}{\partial x_{i}\partial
x_{j}}+\sum\limits_{1}^{n}b_{i}(x)\frac{\partial }{\partial x_{i}}+c(x)
\label{subop}
\end{equation}%
where $a_{ij},\;b_{i},\;c\in C^{\infty }(U)$ and $\left( a_{ij}\right)
_{i,j=1}^{n}\geq 0$. Then $L$ is \emph{subelliptic at $x_{0}\in \mathbb{R}%
^{n}$} if there exists a neighborhood $U$ of $x_{0}$ and positive constants $%
\varepsilon $ and $C$ such that (\ref{subel}) holds for all $u\in
C_{0}^{\infty }(\mathbb{R}^{n})$. $L$ is called \emph{subelliptic} if it is
subelliptic at each point of $\mathbb{R}^{n}$.
\end{definition}

\begin{definition}[Hypoellipticity without loss of derivatives]
A linear operator $L$ acting on distributions in $\mathbb{R}^{n}$ is \emph{%
hypoelliptic} if and only if whenever $Lu\in C^{\infty }\left( \mathbb{R}%
^{n}\right) $ for some distribution $u$, then $u\in C^{\infty }\left( 
\mathbb{R}^{n}\right) $. \newline
$L$ is said to be \emph{hypoelliptic without loss of derivatives} if for
given any open set $U\subset \mathbb{R}^{n}$, then if $\zeta Lu\in
H^{s}\left( \mathbb{R}^{n}\right) $ for all $\zeta \in C_{0}^{\infty }\left(
U\right) $ then $\zeta u\in H^{s}\left( \mathbb{R}^{n}\right) $ for all $%
\zeta \in C_{0}^{\infty }\left( U\right) $.
\end{definition}

For fixed positive integers $n_{k}$, $k=1,\cdots ,m$, we denote $x\in
\prod_{k=1}^{m}\mathbb{R}^{n_{k}}$ as $x=\left( x^{1},\dots
,x^{n_{m}}\right) \in \mathbb{R}^{n}$, with 
\begin{equation*}
x^{k}=\left( x_{1}^{k},\dots ,x_{n_{k}}^{k}\right) \in \mathbb{R}^{n_{k}}%
\text{,\quad }k=1,\cdots ,m\text{,\quad }n=\sum_{k=1}^{m}n_{k},
\end{equation*}%
and we let $\overline{x^{k}}$ be the vector obtained from $x$ by omitting $%
x^{k}$, i.e.%
\begin{equation*}
\overline{x^{k}}=\left( x^{1},\;\ldots \;,x^{k-1},x^{k+1},\;\ldots
\;x^{m}\right) .
\end{equation*}%
In the scalar case, our main result is the following:

\begin{theorem}
\label{main} Suppose that $L_{k}$ as in (\ref{fam}) are subelliptic,and $%
\lambda _{k}=\lambda _{k}(\overline{x^{k}})\geq 0$, $k=1,\cdots ,m$, are
smooth functions. Assume $\lambda _{1}\equiv 1$, and that for $2\leq k\leq m$%
, $\lambda _{k}\left( \overline{x^{k}}\right) >0$ for $\overline{x^{k}}\neq
0 $. Then the operator $L$ defined by 
\begin{equation}
L=\sum\limits_{k=1}^{m}\lambda _{k}L_{k}  \label{operator}
\end{equation}%
is hypoelliptic without loss of derivatives in $\mathbb{R}^{n}$.
\end{theorem}

The important cases of the above result are when some of the coefficients $%
\lambda _{k}$ have a zero of infinite order at the $n_{k}$-dimensional
subspaces $\overline{x^{k}}=0$ in $\mathbb{R}^{n}$. Because of the local
nature of the theorem, our results easily generalize to the case when $%
\sum_{k=1}^{m}\lambda _{k}>0$, and $\lambda _{k}$ has isolated zeroes in $%
\prod_{j\neq k}\mathbb{R}^{n_{j}}$, $k=1,\dots ,m$.

Note that in the case $m=2$ considered by Kohn \cite{Kohn} the coefficient $%
\lambda =\lambda _{2}$ was allowed to have zeroes of finite order outside $%
\overline{x_{2}}=0$. In this case, the operator $L\left( x^{1},x^{2}\right)
=L_{1}\left( x^{2}\right) +\lambda \left( x^{1}\right) L_{2}\left(
x^{2}\right) $ is subelliptic whenever $\lambda $ has a zero of finite order
and $L_{1}$, $L_{2}$ are subelliptic. However, when $m\geq 3$ this result is
not true. Indeed, the operators $L_{1}=-\partial _{x}^{2}-x^{2}\partial
_{y}^{2}-y^{2}\partial z^{2}$ and $L_{2}=-\partial _{x}^{2}-z^{2}\partial
_{y}^{2}-y^{2}\partial z^{2}$ are not subelliptic in $\mathbb{R}^{3}$ since
they are sum of the squares of analytic vector fields which do not satisfy
the H\"{o}rmander condition. Now, $L_{1}$ is hypoelliptic while $L_{2}$ it
is not. See Theorem 1 in \cite{Morio} to check the first assertion. The
proof in \cite{Morio} relies on the special structure of $L_{1}$, in which
the vanishing order of the coefficients is restricted. We consider a
different structure, where the degeneracy is localized in space but there is
no restrictions to the order of vanishing. On the other hand, to check that $%
L_{2}$ is not hypoelliptic, it is enough to note its action on the
distribution $u=\delta _{yz}$, where $\delta _{yz}$ is the Dirac delta
function at the origin in $\mathbb{R}^{2}$. Since $L_{2}$ is self-adjoint,
for any test function ${\varphi \in C}_{0}^{\infty }\left( \mathbb{R}%
^{3}\right) $ we have 
\begin{equation*}
\left\langle L_{2}u,{\varphi }\right\rangle =-\left\langle \delta _{yz},{%
\varphi }_{xx}+z^{2}{\varphi }_{yy}+y^{2}{\varphi }_{zz}\right\rangle
=-\int_{\mathbb{R}}{\varphi }_{xx}\left( x,0,0\right) ~dx=0.
\end{equation*}%
These examples illustrate one of the difficulties in generalizing Kohn's
result to the structure (\ref{operator}) including more than two summands.

Our hypoellipticity result extends to linear systems of equations. Our
interest in systems primarily arises from a study of an $n$-dimensional
Monge-Amp\`{e}re problem. Application of a partial Legendre transformation
leads to a system of quasilinear equations. Some results on the regularity
of solutions to the quasilinear system associated to an $n$-dimensional
Monge-Amp\`{e}re equation were obtained in \cite{Rios1}. In the present
paper we consider a general system of second order linear equations. We do
not assume any control on the vanishing of the operators' coefficients, so
in general vanishing can be infinite. Linear systems have been studied by
many authors and there is a more or less established elliptic theory \cite%
{Lad, Horm}. However, when ellipticity fails much less is known.

We now introduce some notation pertinent to dealing with systems of
equations. We let $\mathbf{u}\left( x\right) =\left( u_{1}\left( x\right)
,\dots ,u_{N}\left( x\right) \right) ^{t}$ be a (column) vector function in $%
\mathbb{R}^{n}$. Given the grouped variables $x^{k}=\left( x_{1}^{k},\dots
,x_{n_{k}}^{k}\right) \in \mathbb{R}^{n_{k}}$ as before, $1\leq k\leq m$, we
denote by $\nabla _{k}\mathbf{u}$ the $N\cdot n_{k}$ column vector%
\begin{equation*}
\nabla _{k}\mathbf{u}=\left( \nabla _{k}u_{p}\right) _{p=1}^{N}\in \mathbb{R}%
\left( N\otimes n_{k}\right) .
\end{equation*}%
To make clear the structure of such vectors, we say that $\nabla _{k}\mathbf{%
u}\in \mathbb{R}\left( N\otimes n_{k}\right) $ Let $\mathbf{A}^{k}$ be an $%
N\times N$ matrix with $n_{k}\times n_{k}$ matrices as its elements, we
write $\mathbf{A}^{k}\in \mathbb{R}\left( N\times N\otimes n_{k}\times
n_{k}\right) $, i.e. 
\begin{equation}
\mathbf{A}^{k}=\left( A_{pq}^{k}\right) _{p,q=1}^{N};\;\;A_{pq}^{k}=\left(
a_{pqij}^{k}\right) _{i,j=1}^{n_{k}}\in \mathbb{R}\left( n_{k}\times
n_{k}\right) ,  \label{matrix1}
\end{equation}%
similarly, let $\mathbf{b}^{k}$ be an $N\times N$ matrix with $n_{k}$%
-vectors as its elements, in this case, $\mathbf{b}^{k}\in \mathbb{R}\left(
N\times N\otimes n_{k}\right) :$%
\begin{equation*}
\mathbf{b}^{k}=\left( \vec{b}_{pq}^{k}\right)
_{p,q=1}^{N};\;\;b_{pq}^{k}=\left( b_{pqi}^{k}\right) _{i=1}^{n_{k}}\in 
\mathbb{R}^{n_{k}},
\end{equation*}%
and let $\mathbf{c}^{k}$ be an $N\times N$ matrix $\mathbf{c}^{k}=\left(
c_{pq}^{k}\right) _{1\leq p,q\leq N}\in \mathbb{R}\left( N\times N\right) $.
We adopt the following multiplication conventions. Whenever $\mathbf{A}\in 
\mathbb{R}\left( N\times N\otimes n_{k}\times n_{k}\right) $ and $\mathbf{v}%
\in \mathbb{R}\left( N\otimes n_{k}\right) $, then $\mathbf{Av}\in \mathbb{R}%
\left( N\otimes n_{k}\right) $, $\mathbf{bv}\in \mathbb{R}\left( N\right) $,
and they are given by%
\begin{equation*}
\mathbf{Av}=\left( \sum_{q=1}^{N}A_{pq}v_{q}\right) _{p=1}^{N},\qquad 
\mathbf{bv}=\left( \sum_{q=1}^{N}b_{pq}v_{q}\right) _{p=1}^{N},
\end{equation*}%
where $A_{ij}\in \mathbb{R}\left( n_{k}\times n_{k}\right) $, $%
b_{ij},v_{j}\in \mathbb{R}\left( n_{k}\right) $, $i,j=1,\cdots ,N$, Given a
vector function $\mathbf{v}\left( x\right) \in \mathbb{R}\left( N\otimes
n_{k}\right) $, we define the divergence operator $\mathbf{div}_{k}\mathbf{v}%
\in \mathbb{R}\left( N\right) $ as%
\begin{equation*}
\mathbf{div}_{k}\mathbf{v}=\left( \mathrm
{div}_{k}v_{p}\right) _{p=1}^{N}.
\end{equation*}%
With these conventions, we define the systems of linear operators%
\begin{equation*}
\mathbf{L}^{k}\mathbf{u}=-\mathbf{div}_{k}\mathbf{A}^{k}\nabla _{k}\mathbf{u}%
+\mathbf{b}^{k}\nabla _{k}\mathbf{u}+\mathbf{c}^{k}\mathbf{u}.
\end{equation*}%
Notice that $\mathbf{L}^{k}\mathbf{u}\in \mathbb{R}^{N}$, and the principal
part of $\mathbf{L}^{k}$ is%
\begin{equation*}
-\mathbf{div}_{k}\mathbf{A}^{k}\nabla _{k}\mathbf{u}=-\left( \mathrm
{div}%
_{k}\left( \sum_{q=1}^{N}A_{pq}^{k}\nabla _{k}u_{q}\right) \right)
_{p=1}^{N}.
\end{equation*}%
The system $\mathbf{L}^{k}$ may be expressed in terms of the scalar
operators $L_{pq}^{k}$ 
\begin{equation}
L_{pq}^{k}=-\mathrm
{div}_{k}A_{pq}^{k}(x)\nabla _{k}+b_{pq}^{k}(x)\nabla
_{k}+c_{pq}^{k}(x).  \label{fam_s}
\end{equation}%
Indeed, we have that the $p^{\text{th}}$-component of $\mathbf{L}^{k}\mathbf{%
u}$ is $\left( \mathbf{L}^{k}\mathbf{u}\right)
_{p}=\sum_{q=1}^{N}L_{pq}^{k}u_{q}$.

We will assume that each system of operators $\mathbf{L}^{k}$, $1\leq k\leq
m $ is subelliptic in $\mathbb{R}^{n_{k}}$, in the following sense:

\begin{definition}[Subelliptic system]
\label{subel_def_s} Let $\mathbf{L}$ be a linear system given by 
\begin{equation}
\mathbf{Lu}=-\mathbf{divA}\nabla \mathbf{u}+\mathbf{b}\nabla \mathbf{u}+%
\mathbf{cu}  \label{subop_s}
\end{equation}%
where 
\begin{equation*}
\mathbf{A}=\left( A_{pq}\right) _{1\leq p,q\leq N}=\left( a_{pqij}\right) 
_{\substack{ 1\leq p,q\leq N  \\ 1\leq i,j\leq n}}\in \mathbb{R}\left(
N\times N\otimes n\times n\right) ,
\end{equation*}%
$\mathbf{b}\in \mathbb{R}\left( N\times N\otimes n\right) $, and $\mathbf{c}%
\in \mathbb{R}\left( N\times N\right) $. Then $\mathbf{L}$ is \emph{%
subelliptic at $x^{0}\in \mathbb{R}^{n}$} if there exists a neighborhood $U$
of $x^{0}$ and positive constants $\varepsilon $ and $C$ such that 
\begin{equation}
||\mathbf{u}||_{\varepsilon }^{2}\leq C\left\{ \left\vert \left( \mathbf{Lu},%
\mathbf{u}\right) \right\vert +\left\Vert \mathbf{u}\right\Vert ^{2}\right\}
\label{subel_s}
\end{equation}%
for all $\mathbf{u}=(u_{1},u_{2},\ldots ,u_{N})$ such that $u_{i}\in
C_{0}^{\infty }(\mathbb{R}^{n})$, $i=1,\cdots ,N$. $L$ is called \emph{%
subelliptic} if it is subelliptic at each point of $\mathbb{R}^{n}$.
\end{definition}

The main result for systems of equations is the following:

\begin{theorem}
\label{main_s} Let $\lambda _{k}\in C^{\infty }\left( \mathbb{R}^{n}\right) $%
, $1\leq k\leq m$ be such that $\lambda _{1}\equiv 1$ and $\lambda _{k}(%
\overline{x^{k}})>0$ if $\overline{x^{k}}\neq 0$ for$\;k=2,\cdots ,m$. Let
the matrices $\mathbf{A}^{k}$ be symmetric, namely $%
a_{pqij}^{k}=a_{qpij}^{k} $, and assume that for each $1\leq k\leq m$ the
systems $\mathbf{L}^{k}$ are subelliptic in $\mathbb{R}^{n_{k}}$. Then the
operator $\mathbf{L}$ defined by 
\begin{equation}
\mathbf{L}=\sum\limits_{k=1}^{m}\lambda _{k}\mathbf{L}_{k}
\label{operator_s}
\end{equation}%
is hypoelliptic in $\mathbb{R}^{n}$. More precisely, if $\mathbf{u}$ is a
vector of distributions on $\mathbb{R}^{n}$ such that $\zeta L\mathbf{u}\in
\prod_{k=1}^{N}H^{s}(\mathbb{R}^{n})$ for all $\zeta \in C_{0}^{\infty }(U)$
where $U$ is an open set in $\mathbb{R}^{n}$, then $\zeta \mathbf{u}\in
\prod_{k=1}^{N}H^{s}(\mathbb{R}^{n})$ for all $\zeta \in C_{0}^{\infty }(U)$%
. That is, $\mathbf{L}$ is hypoelliptic without loss of derivatives.
\end{theorem}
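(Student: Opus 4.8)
The plan is to reduce Theorem \ref{main_s} to its scalar counterpart Theorem \ref{main} (applied componentwise) combined with a Fediĭ-type iteration adapted to the block structure. The basic mechanism behind all Fediĭ--Kohn arguments is the following: to prove $\zeta\mathbf{u}\in\prod H^{s}$, one assumes inductively that $\zeta\mathbf{u}\in\prod H^{t}$ for some $t<s$ and all $\zeta\in C_{0}^{\infty}(U)$, and upgrades by $\varepsilon$ (the subellipticity gain of the $\mathbf{L}^{k}$) at each step, using a suitable family of smoothing operators $\Lambda_{\delta}^{t}$ (Friedrichs mollifiers, or the Bessel-potential operators truncated at frequency $\delta^{-1}$) and commuting them through $\mathbf{L}$. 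The key point that makes the infinite degeneracy tractable is that $\lambda_{k}$ depends only on $\overline{x^{k}}$, so $\lambda_{k}$ commutes with every $x^{k}$-derivative; hence $\lambda_{k}\mathbf{L}^{k}$ interacts with the mollifier in the $x^{k}$-directions exactly as an honest subelliptic system would, with the (possibly infinitely small) factor $\lambda_{k}$ passing harmlessly outside.

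First I would record the vector Gårding/subellipticity inequality for each block: from Definition \ref{subel_def_s}, for every $x^{0}$ there is a neighborhood and constants with $\|\mathbf{v}\|_{\varepsilon,k}^{2}\le C(|(\mathbf{L}^{k}\mathbf{v},\mathbf{v})_{k}|+\|\mathbf{v}\|^{2})$ where $\|\cdot\|_{\varepsilon,k}$ is the Sobolev norm in the $x^{k}$ variables only; the symmetry hypothesis $a_{pqij}^{k}=a_{qpij}^{k}$ guarantees that the quadratic form $(\mathbf{L}^{k}\mathbf{v},\mathbf{v})$ has a genuine real principal part, so that integration by parts produces $\int \langle A^{k}\nabla_{k}\mathbf{v},\nabla_{k}\mathbf{v}\rangle$ plus lower-order terms — this is what lets us absorb the first-order systems $\mathbf{b}^{k}$ and the zeroth-order $\mathbf{c}^{k}$ into the error. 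Next, multiplying by $\lambda_{k}\ge0$ and summing over $k$ gives the basic a priori estimate
\begin{equation*}
\sum_{k=1}^{m}\int \lambda_{k}\,\langle A^{k}\nabla_{k}\mathbf{u},\nabla_{k}\mathbf{u}\rangle
\;\le\; C\big(|(\mathbf{L}\mathbf{u},\mathbf{u})|+\|\mathbf{u}\|^{2}\big),
\end{equation*}
valid on compactly supported pieces; since $\lambda_{1}\equiv1$ and $\mathbf{L}^{1}$ is subelliptic, this already controls $\|\mathbf{u}\|_{\varepsilon,1}$ of localized pieces in terms of $\mathbf{L}\mathbf{u}$.

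The heart of the argument is the induction on Sobolev regularity, and here I would follow Kohn's two-variable scheme but iterate over the $m$ groups in a fixed order $k=1,2,\dots,m$. Having gained regularity in the $x^{1}$-directions, one freezes $x^{1}$ and exploits the second summand: on the region where $\overline{x^{2}}\ne0$ (equivalently $\lambda_{2}>0$) the operator is locally subelliptic, so no new ideas are needed there; the only delicate locus is the subspace $\overline{x^{2}}=0$, where $\lambda_{2}$ may vanish to infinite order. There, following Fediĭ--Morimoto--Kohn, one uses that regularity already established in the $x^{1}$-variables (which are among the variables $\overline{x^{2}}$) together with a logarithmic/Fediĭ cutoff in $\overline{x^{2}}$ to trade a small loss in the degenerate directions against the gain $\varepsilon$ in the $x^{2}$-directions supplied by the subellipticity of $\mathbf{L}^{2}$; the factor $\lambda_{2}$, being a function of $\overline{x^{2}}$ alone, commutes with the $x^{2}$-mollifier and so never obstructs the commutator estimates. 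Repeating this for $k=3,\dots,m$ — at stage $k$ one has already controlled all $x^{j}$-derivatives for $j<k$, and those variables sit inside $\overline{x^{k}}$, which is exactly what is needed to run the Fediĭ trick in the $x^{k}$-block — one eventually bootstraps $\zeta\mathbf{u}$ into $\prod H^{t+\varepsilon}$, and iterating the whole scheme finitely many times reaches any prescribed $s$.

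The main obstacle, and the place requiring the most care, is precisely the order of the induction and the bookkeeping of which variables are already "good" when one attacks the $k$-th degenerate subspace: unlike the case $m=2$, the degeneracy sets $\{\overline{x^{k}}=0\}$ are different $n_{k}$-codimension-wise and can intersect, and the examples with $L_{2}=-\partial_{x}^{2}-z^{2}\partial_{y}^{2}-y^{2}\partial_{z}^{2}$ in the introduction show that naive interlacing fails. The resolution is that our structure is $\mathbf{L}=\sum_{k}\lambda_{k}(\overline{x^{k}})\mathbf{L}^{k}(x^{k},D_{x^{k}})$ with each $\mathbf{L}^{k}$ acting and subelliptic purely in its own block and each $\lambda_{k}$ depending only on the complementary block — this separation is exactly what prevents the pathology, and the proof must make visible that at every stage the variables needed to drive the Fediĭ cutoff have already been regularized by the earlier stages (crucially, $\lambda_{1}\equiv1$ seeds the whole process). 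Once that inductive invariant is set up correctly, each individual step is the standard commutator-with-mollifier computation, and the passage from the a priori estimate to genuine regularity of the distributional solution $\mathbf{u}$ is the usual difference-quotient / Friedrichs-mollifier argument, with the vector-valued norms handled componentwise via the representation $(\mathbf{L}\mathbf{u})_{p}=\sum_{q,k}\lambda_{k}L_{pq}^{k}u_{q}$ from \eqref{fam_s}.
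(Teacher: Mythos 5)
Your outline reproduces the general flavour of the Fedi\u{\i}--Kohn scheme (mollifier/commutator bootstrap gaining $\varepsilon$ per step, the role of $\lambda_k=\lambda_k(\overline{x^k})$, the seeding by $\lambda_1\equiv1$), but it is missing the one idea that actually makes the case $m\geq 3$ work, and it replaces it with an unjustified sequential induction. You propose to regularize block by block: first the $x^1$-directions, then attack the locus $\overline{x^2}=0$ using ``regularity already established in the $x^1$-variables'' plus a Fedi\u{\i}-type cutoff, and so on, claiming that at stage $k$ control of the $x^j$-derivatives for $j<k$ is ``exactly what is needed.'' That claim is precisely what needs proof and is nowhere substantiated: the degenerate set $\overline{x^k}=0$ also involves the not-yet-treated variables $x^{k+1},\dots,x^m$; the weighted estimate coming from subellipticity of $\mathbf{L}^k$ only controls $\Vert\sqrt{\lambda_k}\,\Lambda_{x^k}^{\varepsilon}(\zeta\mathbf{u})\Vert$, which says nothing about $x^k$-regularity where $\lambda_k$ vanishes to infinite order; and no logarithmic-cutoff computation is actually carried out. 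The paper does not argue directionally at all: it proves a single localized a priori estimate (Lemma \ref{main_apriori}), essentially $\Vert\zeta\mathbf{u}\Vert_s\lesssim\Vert\zeta'\mathbf{Lu}\Vert_s+\Vert\zeta''\mathbf{u}\Vert_{s-\varepsilon}$, and iterates only in $s$. The multi-summand difficulty is resolved at one precise point, Lemma \ref{rem_term2}: the commutator errors produced by derivatives of the cutoff in the $x^k$-variables carry the weight $\lambda_k$ but are supported where $x^k\neq 0$, hence where \emph{every other} $\lambda_\ell$ is bounded below, giving $\lambda_k\zeta_0^k\leq C\lambda_\ell\zeta_0^k$ (inequality (\ref{ineq-lambdas})); this lets those errors be fed back through the weighted estimate of Corollary \ref{cor_hard} at the lower level $s-\varepsilon$. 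Nothing in your proposal plays this role, so the terms $\Vert\lambda_k\zeta_0^kS_\delta\zeta_0^k\mathbf{u}\Vert_s$ arising in Lemma \ref{hard} remain uncontrolled. A second missing device: since only $\lambda_1\equiv1$ yields an unweighted gain, and only in the $x^1$-directions, the main term cannot be absorbed without something extra; the paper uses the Poincar\'e-type inequality of Lemma \ref{lemma-Poinc}, trading a small support diameter $d_0$ in $x^1$ for the small factor $d_0^{\alpha(\varepsilon)}$. Your sketch offers no substitute for this absorption step.

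A secondary but genuine flaw is the opening claim that the system case reduces to Theorem \ref{main} ``applied componentwise.'' The equations $(\mathbf{L}\mathbf{u})_p=\sum_{q}\sum_{k}\lambda_kL^k_{pq}u_q$ from (\ref{fam_s}) couple the components, so smoothness of $(\mathbf{L}\mathbf{u})_p$ does not produce a scalar equation for $u_p$ to which the scalar theorem could be applied; the subellipticity hypothesis is on the coupled system (Definition \ref{subel_def_s}), not on the diagonal entries. The paper's treatment is different: the scalar proof is carried out and observed to go through verbatim for systems, the symmetry $a^k_{pqij}=a^k_{qpij}$ entering in the integration by parts of Lemmas \ref{subel_lem} and \ref{inter_lem}. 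Finally, the passage from a priori estimates on smooth functions to the distributional conclusion requires the specific smoothing family $S_\delta$ of (\ref{smooth_oper_S}) (properties (\ref{3_S})--(\ref{4_S}) of Lemma \ref{properties_S}) together with the finite iteration from the a priori Sobolev exponent $s_0$ of the compactly supported distribution up to $s$; your appeal to ``the usual Friedrichs-mollifier argument'' leaves this, and the uniformity in $\delta$ of all commutator estimates, unaddressed.
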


In this work we broadly follow the line of the proof established by Kohn 
\cite{Kohn}, the presence of more than one function $\lambda _{i}$ prevents
however of a straightforward adaptation of proofs and requires a more
delicate analysis. The paper is organized as follows. First, in Section \ref%
{prelim} we give some preliminary lemmas that are used further in Section %
\ref{Apriori estimates} to prove the main a-priori estimate, Lemma \ref%
{main_apriori}. The main result is proved in Section \ref{hypoellipticity}
using families of smoothing operators.

\section{Preliminaries\label{prelim}}

In this section we give basic definitions and establish some preliminary
results which will be used in our proofs.

The Fourier transform of an integrable function $u$ is defined by 
\begin{equation*}
\hat{u}(\xi )=\int\limits_{\mathbb{R}^{n}}e^{-ix\cdot \xi }u(x)~dx.
\end{equation*}%
The inverse Fourier transform is given by%
\begin{equation*}
f^{\vee }\left( x\right) =\int_{\mathbb{R}^{n}}e^{ix\cdot \xi }f(\xi )~d%
\overline{\xi },
\end{equation*}%
where $d\overline{\xi }=\left( 2\pi \right) ^{-n}d\xi $. Note that $f^{\vee
}\left( x\right) =\left( 2\pi \right) ^{-n}\widehat{\widetilde{f}}\left(
x\right) $ where $\widetilde{f}\left( \xi \right) =f\left( -\xi \right) $.

\begin{definition}
\label{def-Sobolev-norm}For any $s\in \mathbb{R}$ we define an operator $%
\Lambda ^{s}$ by the identity 
\begin{equation}
\widehat{\Lambda ^{s}u}(\xi )=\left( 1+\left\vert \xi \right\vert
^{2}\right) ^{s/2}\hat{u}(\xi )  \label{lambda}
\end{equation}%
and the norm $||\cdot ||_{s}$ by 
\begin{equation}
||u||_{s}=||\Lambda ^{s}u||_{L^{2}(\mathbb{R}^{n})}  \label{norm}
\end{equation}%
For any vector function $\mathbf{u}=(u_{1},\ldots ,u_{N})$ we define $%
\Lambda ^{s}\mathbf{u}$ by the identity $\widehat{\Lambda ^{s}\mathbf{u}}%
=\left( \widehat{\Lambda ^{s}u_{1}},\ldots ,\widehat{\Lambda ^{s}u_{N}}%
\right) $, with the norm%
\begin{equation*}
||\mathbf{u}||_{s}=\left( \sum_{p=1}^{N}||\Lambda ^{s}u_{p}||_{L^{2}(\mathbb{%
R}^{n})}^{2}\right) ^{1/2}.
\end{equation*}
\end{definition}

We recall that, more generally, a pseudodifferential operator $P$ with
symbol $p\left( x,\xi \right) $ is given by%
\begin{equation*}
Pf\left( x\right) =\int_{\mathbb{R}^{n}}e^{ix\cdot \xi }p\left( x,\xi
\right) \widehat{u}(\xi )~d\overline{\xi }.
\end{equation*}%
Note that if $p\left( x,\xi \right) =i\xi _{j}$, then $P=\dfrac{\partial }{%
\partial x_{j}}$.

\begin{definition}
Given $u\in C_{0}^{\infty }(\mathbb{R}^{n})$ define the \emph{partial
Fourier transform} $\mathfrak{F}_{x^{k}}u(\overline{x^{k}},\xi ^{k})$ by 
\begin{equation*}
\mathfrak{F}_{x^{k}}u(\overline{x^{k}},\xi ^{k})=\int\limits_{\mathbb{R}%
_{x^{k}}^{n_{k}}}e^{-ix^{k}\cdot \xi ^{k}}u(x)dx^{k}
\end{equation*}%
For vector functions $\mathbf{u}=(u_{1},\ldots ,u_{N}),\;u_{i}\in
C_{0}^{\infty }(\mathbb{R}^{n})$ we set 
\begin{equation*}
\mathfrak{F}_{x^{k}}\mathbf{u}\left( \overline{x^{k}},\xi ^{k}\right)
=\left( \mathfrak{F}_{x^{k}}u_{1}\left( \overline{x^{k}},\xi ^{k}\right)
,\ldots ,\mathfrak{F}_{x^{k}}u_{N}\left( \overline{x^{k}},\xi ^{k}\right)
\right) .
\end{equation*}
\end{definition}

\begin{definition}
For $s\in \mathbb{R}$ define the partial operators $\Lambda _{x^{k}}^{s}$ by 
\begin{equation*}
\mathfrak{F}_{x^{k}}(\Lambda _{x^{k}}^{s}u)(\overline{x^{k}},\xi
^{k})=(1+|\xi ^{k}|^{2})^{s/2}\mathfrak{F}_{x^{k}}u(\overline{x^{k}},\xi
^{k})
\end{equation*}%
Similarly, for vector functions $\mathbf{u}$, we set $\mathfrak{F}%
_{x^{k}}(\Lambda _{x^{k}}^{s}\mathbf{u})(\overline{x^{k}},\xi ^{k})=(1+|\xi
^{k}|)^{s/2}\mathfrak{F}_{x^{k}}\mathbf{u}(\overline{x^{k}},\xi ^{k})$.
\end{definition}

The next lemma is the classical result on a composition of
pseudodifferential operators (see for example \cite{Tay}). In what follows $%
S^{m}$ denotes the usual classes $S_{1,0}^{m}$, of symbols $p\left( x,\xi
\right) $ satisfying 
\begin{equation}
\left\vert \partial _{x}^{\alpha }\partial _{\xi }^{\beta }p\left( x,\xi
\right) \right\vert \leq B_{\alpha ,\beta }\left( 1+\left\vert \xi
\right\vert ^{2}\right) ^{\frac{1}{2}\left( m-\left\vert \beta \right\vert
\right) },\qquad \text{for all }x,\xi ,\text{ }\alpha ,\beta .
\label{symbols}
\end{equation}

\begin{lemma}
\label{composition}Let $p\left( x,\xi \right) \in S^{m}$ and $q\left( x,\xi
\right) \in S^{k}$ then 
\begin{equation*}
p\left( x,D\right) q\left( x,D\right) =r\left( x,D\right) \in S^{m+k}
\end{equation*}%
with 
\begin{equation*}
r\left( x,\xi \right) \sim \sum\limits_{\alpha \geq 0}\frac{i^{|\alpha |}}{%
\alpha !}D_{\xi }^{\alpha }p\left( x,\xi \right) D_{x}^{\alpha }q\left(
x,\xi \right) ,
\end{equation*}%
in the sense that 
\begin{equation*}
\left( p\left( x,\xi \right) -\sum_{\left\vert \alpha \right\vert <N}D_{\xi
}^{\alpha }p\left( x,\xi \right) D_{x}^{\alpha }q\left( x,\xi \right)
\right) \in S^{m+k-N},\text{ for all }N\geq 0.
\end{equation*}
\end{lemma}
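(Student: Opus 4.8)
This is a classical theorem whose proof via oscillatory integrals is carried out in full in \cite{Tay}; here I outline the argument. The plan is first to produce the symbol of the composition as an oscillatory integral, then to read off both the membership $r\in S^{m+k}$ and the asymptotic expansion by means of a Taylor expansion in one of the variables combined with integration by parts.

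First I would compute the symbol. For $u\in C_{0}^{\infty}(\mathbb{R}^{n})$, unwinding the definitions of $q(x,D)$, of the Fourier transform, and of $p(x,D)$, and interchanging integrals (legitimate because $\widehat{u}$ is Schwartz), one obtains after the change of variables $z=y-x$, $\zeta=\eta-\xi$,
\[
p(x,D)q(x,D)u(x)=\int_{\mathbb{R}^{n}}e^{ix\cdot\xi}\,r(x,\xi)\,\widehat{u}(\xi)\,d\overline{\xi},\qquad r(x,\xi)=\int\!\!\int e^{-iz\cdot\zeta}\,p(x,\xi+\zeta)\,q(x+z,\xi)\,dz\,d\overline{\zeta},
\]
the last integral being understood as an oscillatory integral. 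To make sense of it and to show $r\in S^{m+k}$, one inserts a cutoff $\chi(\varepsilon z,\varepsilon\zeta)$ with $\chi\in C_{0}^{\infty}$, $\chi(0,0)=1$, and integrates by parts using $\langle z\rangle^{-2M}(1-\Delta_{\zeta})^{M}e^{-iz\cdot\zeta}=e^{-iz\cdot\zeta}$ and the analogous identity with $z$ and $\zeta$ exchanged; the symbol bounds \eqref{symbols} together with the Peetre inequality $\langle\xi+\zeta\rangle^{\mu}\le C_{\mu}\langle\xi\rangle^{\mu}\langle\zeta\rangle^{|\mu|}$ render the integrand absolutely convergent for $M$ large, the limit $\varepsilon\to0$ exists independently of $\chi$, and differentiating under the integral sign gives $|\partial_{x}^{\alpha}\partial_{\xi}^{\beta}r(x,\xi)|\le C_{\alpha\beta}\langle\xi\rangle^{m+k-|\beta|}$.

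Next, for the expansion, I would Taylor-expand $q(x+z,\xi)$ in $z$ about $z=0$,
\[
q(x+z,\xi)=\sum_{|\alpha|<N}\frac{z^{\alpha}}{\alpha!}\,\partial_{x}^{\alpha}q(x,\xi)+N\sum_{|\alpha|=N}\frac{z^{\alpha}}{\alpha!}\int_{0}^{1}(1-t)^{N-1}(\partial_{x}^{\alpha}q)(x+tz,\xi)\,dt.
\]
In each polynomial term one replaces $z^{\alpha}e^{-iz\cdot\zeta}=i^{|\alpha|}\partial_{\zeta}^{\alpha}e^{-iz\cdot\zeta}$, integrates by parts in $\zeta$ so that the derivative falls on $p(x,\xi+\zeta)$, and performs the remaining integrations via $\int\!\int e^{-iz\cdot\zeta}f(\zeta)\,dz\,d\overline{\zeta}=f(0)$; this yields precisely $\sum_{|\alpha|<N}\frac{(-i)^{|\alpha|}}{\alpha!}\partial_{\xi}^{\alpha}p\,\partial_{x}^{\alpha}q=\sum_{|\alpha|<N}\frac{i^{|\alpha|}}{\alpha!}D_{\xi}^{\alpha}p\,D_{x}^{\alpha}q$, the claimed terms of the asymptotic series.

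The remaining work — and the step I expect to be the main obstacle — is to show that the Taylor remainder contributes a symbol $r_{N}\in S^{m+k-N}$. Converting the $z^{\alpha}$ with $|\alpha|=N$ into $\partial_{\zeta}^{\alpha}$ and integrating by parts produces a gain of $N$ powers of $\langle\zeta\rangle^{-1}$ on the $p$-factor (hence a factor $\langle\xi+\zeta\rangle^{m-N}$), while $|\partial_{x}^{\alpha}q(x+tz,\xi)|\le C\langle\xi\rangle^{k}$ uniformly in $t$; the same cutoff-and-integration-by-parts scheme in $z$ and $\zeta$ as above, with the number of integrations by parts chosen large relative to $N$, $n$ and the orders of the $x$- and $\xi$-derivatives being estimated, then gives $|\partial_{x}^{\alpha}\partial_{\xi}^{\beta}r_{N}(x,\xi)|\le C\langle\xi\rangle^{m+k-N-|\beta|}$. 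The difficulty lies purely in the bookkeeping of which factors carry the $\langle\zeta\rangle$-powers and in verifying absolute convergence at every stage; this is routine but lengthy, and the details may simply be cited from \cite{Tay}.
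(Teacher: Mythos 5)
Your sketch is the standard oscillatory-integral proof (composition symbol, Taylor expansion in $z$, integration by parts, remainder estimate), which is exactly the argument in the cited source; the paper itself gives no proof of this lemma and simply invokes it as the classical result from \cite{Tay}. So your proposal is correct and consistent with the paper's treatment.
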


We now give two general lemmas concerning pseudodifferential and subelliptic
operators. The following lemma \cite{Kohn} is a main tool for dealing with
the inner products involving pseudodifferential operators and ordinary
derivatives. Roughly speaking, it allows to lower the order of
differentiation in an inner product using integration by parts and standard
pseudodifferential calculus.

We will localize our estimates by multiplication with suitable cutoff
functions. The following concepts will be useful in our microlocal analysis.

\begin{definition}[Cutoff functons, supporting relation]
We say that ${\varphi }$ is a cutoff function in $\mathbb{R}^{n}$ if ${%
\varphi \in }C_{0}^{\infty }\left( \mathbb{R}^{n}\right) $ and $0\leq {%
\varphi \leq 1}$. Given two measurable functions ${\varphi ,\psi }$ we
introduce the notation ${\varphi }\prec \psi $, and we say that $\psi $ 
\emph{supports} ${\varphi }$ if $\psi $ is a cutoff function and $\psi
\equiv 1$ in a neighbourhood of $\mathrm
{support}{\varphi }$.
\end{definition}

\begin{lemma}
\label{general} Let $P$ and $Q$ be pseudodifferential operators of orders $p$
and $q$, respectively. Assume that $P-P^{\ast }$ and $Q-Q^{\ast }$ are of
orders (at most) $p-1$ and $q-1$, respectively. Let $\zeta ,\eta \in
C_{0}^{\infty }(\mathbb{R}^{n})$, such that $\zeta _{x_{i}}\prec \eta $.
Then there exists $C>0$ such that for all\textbf{\ }$\mathbf{u}\in
\prod_{k=1}^{N}C^{\infty }(\mathbb{R}^{n})$ 
\begin{equation}  \label{p_q_est}
\left\vert \left( P\zeta \mathbf{u}_{x_{i}},Q\zeta \mathbf{u}\right)
\right\vert \leq C\left( \left\Vert \zeta \mathbf{u}\right\Vert
_{(p+q)/2}^{2}+\left\Vert \eta \mathbf{u}\right\Vert _{(p+q)/2}^{2}\right) .
\end{equation}
Moreover, if $\mathbf{u}\in \prod_{k=1}^{N}H^{r }(\mathbb{R}^{n})$ with $%
r=\max\{p+2,q+1\}$, then the same estimate holds.
\end{lemma}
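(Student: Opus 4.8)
The plan is to put the cutoff on both arguments, transfer the derivative $\partial_{x_i}$ by integration by parts, and control all symbol orders via the composition calculus of Lemma \ref{composition}. By the product rule $\zeta\mathbf u_{x_i}=(\zeta\mathbf u)_{x_i}-\zeta_{x_i}\mathbf u$, so
\[
\bigl(P\zeta\mathbf u_{x_i},Q\zeta\mathbf u\bigr)=\bigl(P\partial_{x_i}(\zeta\mathbf u),Q\zeta\mathbf u\bigr)-\bigl(P\zeta_{x_i}\mathbf u,Q\zeta\mathbf u\bigr).
\]
Since $\zeta_{x_i}\prec\eta$, in the last term we may replace $\zeta_{x_i}\mathbf u$ by $\zeta_{x_i}\eta\mathbf u$ and rewrite the pairing as $\bigl(\Lambda^{(q-p)/2}P\zeta_{x_i}\,\eta\mathbf u,\ \Lambda^{(p-q)/2}Q\zeta\mathbf u\bigr)$; as $\Lambda^{(q-p)/2}P\zeta_{x_i}$ and $\Lambda^{(p-q)/2}Q$ are pseudodifferential operators of order $(p+q)/2$, the Cauchy--Schwarz inequality and $L^2$-boundedness of order-zero operators bound it by $C\bigl(\|\eta\mathbf u\|_{(p+q)/2}^2+\|\zeta\mathbf u\|_{(p+q)/2}^2\bigr)$. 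This ``$\Lambda^{\pm a}$-balancing'' device also bounds $|(Aw,w)|$ by $C\|w\|_{(p+q)/2}^2$ whenever $A$ has order $p+q$, and I will invoke it repeatedly.

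It remains to estimate the principal term $\bigl(P\partial_{x_i}w,Qw\bigr)$ with $w=\zeta\mathbf u$. Integrating by parts in $x_i$ and commuting $\partial_{x_i}$ past $P$ and $Q$ gives
\[
\bigl(P\partial_{x_i}w,Qw\bigr)=-\bigl(Pw,Q\partial_{x_i}w\bigr)-\bigl([\partial_{x_i},P]w,Qw\bigr)-\bigl(Pw,[\partial_{x_i},Q]w\bigr),
\]
and by Lemma \ref{composition} the commutators $[\partial_{x_i},P]$ and $[\partial_{x_i},Q]$ have orders $p$ and $q$ (commuting with $\partial_{x_i}$, whose symbol is independent of $x$, costs a derivative in $x$ but none in $\xi$), so the last two terms are $O(\|w\|_{(p+q)/2}^2)$ by the balancing device. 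For a second relation, move $P$ and $Q$ to the opposite argument of the pairing: $\bigl(P\partial_{x_i}w,Qw\bigr)=\bigl(\partial_{x_i}w,P^\ast Qw\bigr)$ and $\bigl(Pw,Q\partial_{x_i}w\bigr)=\bigl(Q^\ast Pw,\partial_{x_i}w\bigr)$; since $P-P^\ast\in S^{p-1}$ and $Q-Q^\ast\in S^{q-1}$, Lemma \ref{composition} gives that $P^\ast Q-Q^\ast P$ has order $\le p+q-1$, and one more integration by parts shows that $\bigl(P\partial_{x_i}w,Qw\bigr)$ and $\bigl(Pw,Q\partial_{x_i}w\bigr)$ agree modulo $O(\|w\|_{(p+q)/2}^2)$. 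Combined with the first identity, which expresses their sum as $O(\|w\|_{(p+q)/2}^2)$, we obtain $\bigl|\bigl(P\partial_{x_i}w,Qw\bigr)\bigr|\le C\|w\|_{(p+q)/2}^2$, which together with the reduction above proves \eqref{p_q_est} for $\mathbf u\in\prod_{k=1}^{N}C^\infty(\mathbb R^n)$.

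The crux, and the only genuine obstacle, is this cancellation: a naive application of Cauchy--Schwarz to $\bigl(P\partial_{x_i}w,Qw\bigr)$ yields $\|w\|_{p+1}\|w\|_q$, one derivative more than permitted, and only the interplay of integration by parts with the near-self-adjointness of $P$ and $Q$ — which makes the top-order, skew-symmetric part of the form vanish — recovers that derivative; the delicate point is the bookkeeping of which commutator lands in which class $S^m$, all furnished by Lemma \ref{composition}. Finally, the case $\mathbf u\in\prod_{k=1}^{N}H^r(\mathbb R^n)$ with $r=\max\{p+2,q+1\}$ follows by density: for such $r$, $P\zeta\partial_{x_i}\mathbf u$ and $Q\zeta\mathbf u$ lie in $H^1_{\mathrm{loc}}$ and $\zeta\mathbf u,\eta\mathbf u\in H^{(p+q)/2}$, so that all manipulations above are legitimate and both sides of \eqref{p_q_est} depend continuously on $\mathbf u$ in the $H^r$ topology; one then passes to the limit from $C_0^\infty$ approximants.
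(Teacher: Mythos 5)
Your proposal is correct in substance but takes a genuinely different route from the paper. The paper does not reprove the smooth-case estimate at all: it cites Kohn for $u\in C^{\infty}$ and spends its proof on the extension to $H^{r}$, approximating by $\widehat{u_{n}}(\xi )=e^{-|\xi |^{2}/n^{2}}\widehat{u}(\xi )$ and passing to the limit (invoking Arzel\`a--Ascoli to get $\Vert P\zeta \partial _{x_{i}}(u_{n}-u)\Vert \rightarrow 0$ and $\Vert Q\zeta (u_{n}-u)\Vert \rightarrow 0$). You instead supply the core argument itself: the splitting $\zeta \mathbf{u}_{x_{i}}=(\zeta \mathbf{u})_{x_{i}}-\zeta _{x_{i}}\mathbf{u}$ with $\zeta _{x_{i}}\mathbf{u}=\zeta _{x_{i}}\eta \mathbf{u}$, the $\Lambda ^{\pm (p-q)/2}$ balancing, and the two identities --- integration by parts giving $\left( P\partial _{x_{i}}w,Qw\right) +\left( Pw,Q\partial _{x_{i}}w\right) =O(\Vert w\Vert _{(p+q)/2}^{2})$, and the comparison of the two terms via $P^{\ast }Q-Q^{\ast }P\in S^{p+q-1}$ --- which together yield the cancellation of the top-order skew part; your commutator bookkeeping ($[\partial _{x_{i}},P]$ of order $p$, etc.) is right, and this is essentially the content of the Kohn lemma the paper quotes. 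Your treatment of the $H^{r}$ case is also sound and in fact cleaner than the paper's: since $r=\max \{p+2,q+1\}\geq (p+q)/2$, both sides of (\ref{p_q_est}) are continuous in the $H^{r}$ topology, so plain density suffices and no compactness argument is needed. One caveat: your cancellation step (and the lemma as literally stated) requires the $L^{2}$ pairing to be treated as real, i.e.\ real-valued $\mathbf{u}$ or real parts; the passage from $\left( Pw,Q\partial _{x_{i}}w\right) =\left( Q^{\ast }Pw,\partial _{x_{i}}w\right) $ to a comparison with $\left( \partial _{x_{i}}w,P^{\ast }Qw\right) $ hides a complex conjugation, and for genuinely complex data the estimate fails (take $P=Q=\mathrm{Id}$ and $u=e^{i\lambda x_{1}}$: the left side of (\ref{p_q_est}) grows like $\lambda $ while the right side stays bounded). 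Since the paper only applies the lemma to real-valued quantities, this is a caveat on the statement rather than a defect of your argument, but it should be made explicit.
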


\begin{proof}
For the simplicity of the argument let us consider the scalar case. The
desired estimate has been already shown for $u\in C^{\infty }(\mathbb{R}%
^{n}) $ \cite{Kohn}. In case $u\in H^{r}(\mathbb{R}^{n})$ we find an
approximating sequence $\{u_{n}\}_{n=1}^{\infty }\subset C^{\infty }$ such
that $\lim_{n\rightarrow \infty }||u_{n}-u||_{r}=0$. One can check that $%
u_{n}$ defined by $\widehat{u_{n}}(\xi )=\exp (-|\xi |^{2}/n^{2})\widehat{u}%
(\xi )$ satisfies the desired properties for all $p,\;q\;\in \mathbb{R}$. By
the definition of $r$ it follows that $\lim_{n\rightarrow \infty
}||u_{n}-u||_{(p+q)/2}=0$ and, moreover, by Arzela-Ascoli theorem (replacing 
$\left\{ u_{n}\right\} $ by an appropriate subsequence, which we dub again $%
\left\{ u_{n}\right\} $) $||P\zeta \partial _{x_{i}}(u_{n}-u)||\rightarrow 0$
and $||Q\zeta (u_{n}-u)||\rightarrow 0$ as $n\rightarrow \infty $.
Therefore, applying (\ref{p_q_est}) to $u_{n}$ and taking the limit as $%
n\rightarrow \infty $ we obtain the desired result.
\end{proof}

We will henceforth use special families of cutoff functions satisfying the
following properties.

\begin{itemize}
\item We let $\sigma _{k},\;\tilde{\sigma}_{k},\;\sigma _{k}^{\prime
},\;\sigma _{k}^{\prime \prime }\in C_{0}^{\infty }(\mathbb{R}%
^{n_{k}}),\;k=1,\cdots ,m$ be cutoff functions such that $\sigma _{k}=1$ in
a neighbourhood of $0\in \mathbb{R}^{n_{k}}$, and ${\sigma }_{k}\prec \tilde{%
\sigma}_{k}\prec \sigma _{k}^{\prime }\prec \sigma _{k}^{\prime \prime }$.
Let $\zeta (x)=\prod\limits_{k=1}^{m}\sigma _{k}(x^{k})$, with $\tilde{\zeta}%
(x)$, $\zeta ^{\prime }(x)$, and $\zeta ^{\prime \prime }(x)$ are similarly
defined.

\item We fix $U_{0}^{k}$ and $U^{k}$ to be neighborhoods of the origin in $%
\mathbb{R}^{n_{k}}$ such that $\overline{U}_{0}^{k}\subset U^{k}$ and $%
\sigma _{k}=1$ on $U^{k}$. Let $\sigma _{0}^{k},\;\tilde{\sigma}_{0}^{k}$ be
cutoff functions in $\mathbb{R}^{n_{k}}$ with $\mathrm
{support}(\sigma
_{0}^{k})\cap U_{0}^{k}=\emptyset $, and $\left\vert \nabla _{x^{k}}\sigma
_{k}\right\vert \prec \sigma _{0}^{k}\prec \tilde{\sigma}_{0}^{k}$. Set $%
\zeta _{0}^{k}=\sigma _{0}^{k}\prod\limits_{l=1,l\neq k}^{m}\tilde{\sigma}%
_{l}(x^{l})$, and $\tilde{\zeta}_{0}^{k}=\tilde{\sigma}_{0}^{k}\prod%
\limits_{l=1,l\neq k}^{m}\sigma _{l}^{\prime }(x^{l})$. Note that $\zeta
_{0}^{k}\zeta _{x_{i}^{k}}=\zeta _{x_{i}^{k}}$.

\item We choose the cutoffs functions\ so that they also satisfy $\sigma
_{k}^{0}\prec \tilde{\sigma}_{k}$, $\tilde{\sigma}_{k}^{0}\prec \sigma
_{k}^{\prime }$. Hence $\zeta _{0}^{k}\prec \tilde{\zeta}$ and $\tilde{\zeta}%
_{0}^{k}\prec \zeta ^{\prime }$.

\item In the case $k=1$ we write $\zeta _{0}$ for $\zeta _{0}^{1}$.
\end{itemize}

The next lemma is the classical result on a composition of
pseudodifferential operators (see for example \cite{Tay}). In what follows $%
S^{m}$ denotes the usual classes $S_{1,0}^{m}$, of symbols $p\left( x,\xi
\right) $ satisfying%
\begin{equation*}
\left\vert \partial _{x}^{\xi }\partial _{\xi }^{\beta }p\left( x,\xi
\right) \right\vert \leq B_{\alpha ,\beta }\left( 1+\left\vert \xi
\right\vert ^{2}\right) ^{\frac{1}{2}\left( m-\left\vert \beta \right\vert
\right) },\qquad \text{for all }x,\xi ,\text{ }\alpha ,\beta .
\end{equation*}%
To carry out an approximation scheme we will define a family of smoothing
pseudodifferential operators \cite{Kohn}.

\begin{definition}
For $\delta >0$ we define $S_{\delta }$ by%
\begin{equation}
\widehat{S_{\delta }u}\left( \xi \right) =\frac{1}{\left( 1+\delta
^{2}\left\vert \xi \right\vert ^{2}\right) ^{3/2}} \widehat{u}\left( \xi
\right) =s_{\delta }\left( \xi \right) \widehat{u}\left( \xi \right) .
\label{smooth_oper_S}
\end{equation}
\end{definition}

The operator $S_{\delta }$ is partially smoothing; in particular, if $u\in
H^s(\mathbb{R}^n)$, then $S_{\delta } u\in H^{s+3}\left( \mathbb{R}%
^{n}\right) $. We also have:

\begin{lemma}
\label{properties_S}The operator $S_{\delta }$ has the following properties:%
\renewcommand{\theenumi}{\roman{enumi}}%

\begin{enumerate}
\item \label{0_S}$S_{\delta }\in S^{0}$ uniformly in $\delta $ for $0\leq
\delta \leq 1$, where $S^{0}$ is the symbol class defined by (\ref{symbols})
with $m=0$. More precisely, for any $s\in \mathbb{R}$ 
\begin{equation*}
\sup_{0<\delta \leq 1}\left\Vert S_{\delta }u\right\Vert _{s}=\left\Vert
u\right\Vert _{s}.
\end{equation*}

\item \label{1_S}$S_{\delta }:H^{s}\mapsto H^{s}$ is a bounded operator,
with bounds independent of $\delta $.

\item \label{2_S}If $u\in H^{s_{0}}$ for some $s_{0}\in \mathbb{R}$ then $%
S_{\delta }u\in H^{s_0+3}$.

\item \label{3_S}If for any $s\in \mathbb{R}$, $u\in H^{s-3}$ and $%
\lim_{\delta \rightarrow 0^{+}}\left\Vert S_{\delta }u\right\Vert _{s}\leq C$%
, then $u\in H^{s}$.

\item \label{4_S}If $a\in C_{0}^{\infty }(\mathbb{R}^{n})$, then 
\begin{equation*}
\begin{array}{rcl}
\lbrack a,S_{\delta }] & = & \sum_{k=1}^{m}\sum_{j=1}^{n_{k}}\left(
a_{x_{j}^{k}}\frac{\partial }{\partial x_{j}^{k}}\right) R_{\delta
}^{-2}S_{\delta }+Q_{\delta }^{-2}S_{\delta }%
\end{array}%
\end{equation*}%
where $R_{\delta }^{-2}$ and $Q_{\delta }^{-2}$ are families of
pseudodifferential operators of order $-2$ uniformly in $\delta $ for $0\leq
\delta \leq 1$.
\end{enumerate}
\end{lemma}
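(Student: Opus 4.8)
The plan is to handle the five properties separately; items (i)--(iv) will follow from elementary estimates on the Fourier multiplier $s_\delta(\xi)=(1+\delta^2|\xi|^2)^{-3/2}$ together with Plancherel's theorem, while (v) is the only item requiring the pseudodifferential composition calculus of Lemma \ref{composition}. The basic inequalities I will use throughout are, for $0\le\delta\le1$, the pointwise bounds $(1+|\xi|^2)^{-3/2}\le s_\delta(\xi)\le1$ and $\delta^2(1+|\xi|^2)\le1+\delta^2|\xi|^2$; the latter gives in particular $(1+|\xi|^2)^{3/2}s_\delta(\xi)\le\delta^{-3}$ as well as $s_\delta(\xi)^{-1}(1+|\xi|^2)^{-3/2}\le1$.

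For (i) I would differentiate $s_\delta$ repeatedly in $\xi$, obtaining a finite sum of terms of the form $c\,\delta^{2\ell}\xi^{\gamma}(1+\delta^2|\xi|^2)^{-3/2-\ell}$ with $|\gamma|\le\ell\le|\beta|$, and then check, writing $u=\delta|\xi|$ and using $\delta^2(1+|\xi|^2)\le1+u^2$, that $|\partial_\xi^\beta s_\delta(\xi)|\le B_\beta(1+|\xi|^2)^{-|\beta|/2}$ with $B_\beta$ independent of $\delta\in[0,1]$; hence $s_\delta\in S^0$ uniformly. The displayed identity follows from $\|S_\delta u\|_s^2=\int(1+|\xi|^2)^s s_\delta(\xi)^2|\widehat u(\xi)|^2\,d\xi$, which is $\le\|u\|_s^2$ and increases to $\|u\|_s^2$ as $\delta\downarrow0$ by monotone convergence. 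Property (ii) is this same $L^2$ bound. For (iii) I use $(1+|\xi|^2)^{(s_0+3)/2}s_\delta(\xi)\le\delta^{-3}(1+|\xi|^2)^{s_0/2}$ to get $\|S_\delta u\|_{s_0+3}\le\delta^{-3}\|u\|_{s_0}$. For (iv), since $u\in H^{s-3}$ its Fourier transform is a locally square-integrable function, and $(1+|\xi|^2)^s s_\delta(\xi)^2|\widehat u(\xi)|^2$ increases pointwise to $(1+|\xi|^2)^s|\widehat u(\xi)|^2$ as $\delta\downarrow0$; by the monotone convergence theorem $\|u\|_s^2=\lim_{\delta\to0^+}\|S_\delta u\|_s^2\le C^2<\infty$, so $u\in H^s$.

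The substance is in (v). Since $S_\delta$ is a Fourier multiplier, $aS_\delta$ has symbol exactly $a(x)s_\delta(\xi)$, while by Lemma \ref{composition} applied to $s_\delta(D)\circ(\text{multiplication by }a)$ one has, for every $N$, that $S_\delta a=\mathrm{op}\!\big(\sum_{|\alpha|<N}\tfrac{i^{-|\alpha|}}{\alpha!}\partial_\xi^\alpha s_\delta(\xi)\,\partial_x^\alpha a(x)\big)+\mathrm{op}(r_N)$ with $r_N\in S^{-N}$ uniformly in $\delta$ (uniformity because the symbol seminorms of $s_\delta$ are $\delta$-independent by (i)). The $\alpha=0$ terms cancel against $aS_\delta$ in the commutator. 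The key algebraic observation is the factorization $\partial_\xi^\alpha s_\delta(\xi)=\widetilde q_{\delta,\alpha}(\xi)\,s_\delta(\xi)$ with $\widetilde q_{\delta,\alpha}=\partial_\xi^\alpha s_\delta/s_\delta\in S^{-|\alpha|}$ uniformly in $\delta$ (the same computation as in (i)), and for $|\alpha|=1$, $\widetilde q_{\delta,e_j}(\xi)=(i\xi_j)\cdot c\,\delta^2(1+\delta^2|\xi|^2)^{-1}$ for an absolute constant $c$. Using that composing an operator whose symbol factors as $b(x)\,i\xi_j$ (that is, $b(x)\partial_{x_j}$) with the $\xi$-only multipliers $\mathrm{op}(c\,\delta^2(1+\delta^2|\xi|^2)^{-1})$ and $S_\delta$ produces no remainder, the $|\alpha|=1$ part of the commutator is precisely $\sum_{k,j}a_{x_j^k}\partial_{x_j^k}R_\delta^{-2}S_\delta$ with $R_\delta^{-2}=\mathrm{op}(c\,\delta^2(1+\delta^2|\xi|^2)^{-1})\in S^{-2}$ uniformly (this membership being one of the elementary estimates above). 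For $2\le|\alpha|<N$ each term equals $\mathrm{op}\!\big(\tfrac{i^{-|\alpha|}}{\alpha!}\widetilde q_{\delta,\alpha}(\xi)\partial_x^\alpha a(x)\big)S_\delta$ with left factor in $S^{-|\alpha|}\subset S^{-2}$ uniformly, and the remainder can be written $\mathrm{op}(r_N)=\mathrm{op}\!\big(r_N(x,\xi)(1+\delta^2|\xi|^2)^{3/2}\big)S_\delta$, whose left factor lies in $S^{-N+3}$ uniformly (using $(1+\delta^2|\xi|^2)^{3/2}\le(1+|\xi|^2)^{3/2}$); choosing $N\ge5$ puts this in $S^{-2}$. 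Summing the finitely many resulting operators gives the single family $Q_\delta^{-2}\in S^{-2}$ uniformly, which proves (v); the vector-valued assertions follow by applying the scalar statements componentwise. I expect the main obstacle to be the bookkeeping in (v): proving the uniform-in-$\delta$ symbol estimates for $\partial_\xi^\alpha s_\delta$ and $\widetilde q_{\delta,\alpha}$, and keeping straight which compositions are exact (those with a $\xi$-only factor on the right) and which generate remainders that must be absorbed into $Q_\delta^{-2}S_\delta$.
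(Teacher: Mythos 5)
Your proposal is correct and follows essentially the same route as the paper: items (i)--(iv) are the same direct Fourier-multiplier estimates (with your monotone-convergence step making the limit/sup assertions explicit), and item (v) rests on the same symbol computation via Lemma \ref{composition}, with the principal symbol $\partial_{\xi_j}s_\delta = -3\xi_j\,\delta^2(1+\delta^2|\xi|^2)^{-1}s_\delta$ factored against $S_\delta$. The only difference is that the paper delegates this computation to Kohn's Lemma 3.3 and a brief remark on the lower-order terms, whereas you carry out the expansion, the uniform-in-$\delta$ symbol bounds, and the exact right-composition with the multiplier $S_\delta$ in full detail.
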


\begin{proof}
Since $\exp \frac{1}{\left( 1+\delta ^{2}\left\vert \xi \right\vert
^{2}\right) ^{3/2}}\leq 1$, we have 
\begin{eqnarray*}
\left\Vert S_{\delta }u\right\Vert _{s}^{2} &=&\int_{\mathbb{R}^{n}}\left(
1+\left\vert \xi \right\vert ^{2}\right) ^{s}\left( \frac{1}{\left( 1+\delta
^{2}\left\vert \xi \right\vert ^{2}\right) ^{3/2}}\right) ^{2}\left\vert 
\widehat{u}\right\vert ^{2}~d\xi  \\
&\leq &\int_{\mathbb{R}^{n}}\left( 1+\left\vert \xi \right\vert ^{2}\right)
^{s}\left\vert \widehat{u}\right\vert ^{2}~d\xi =\left\Vert u\right\Vert
_{s}^{2}
\end{eqnarray*}%
This proves (\ref{0_S}) and (\ref{1_S}). Property (\ref{2_S}) follows easily
from the definition of $S_{\delta }$. 

On the other hand, if $u\in H^{s-3}$ and $\lim_{\delta \rightarrow
0^{+}}\left\Vert S_{\delta }u\right\Vert _{s}\leq C$ then%
\begin{eqnarray*}
\left\Vert u\right\Vert _{s} &=&\int_{\mathbb{R}^{n}}\left( 1+\left\vert \xi
\right\vert ^{2}\right) ^{s}\left\vert \widehat{u}\right\vert ^{2}~d\xi  \\
&=&\lim_{\delta \rightarrow 0^{+}}\int_{\mathbb{R}^{n}}\left( 1+\left\vert
\xi \right\vert ^{2}\right) ^{s}\left( \frac{1}{\left( 1+\delta
^{2}\left\vert \xi \right\vert ^{2}\right) ^{3/2}}\right) ^{2}\left\vert 
\widehat{u}\right\vert ^{2}~d\xi  \\
&=&\lim_{\delta \rightarrow 0^{+}}\left\Vert \Lambda _{s}S_{\delta
}u\right\Vert ^{2}\leq C^{2}.
\end{eqnarray*}
So $||u||_{s}<C,$ which shows (\ref{3_S}). To prove property (\ref{4_S}) we
first note that has been shown in \cite[Lemma 3.3]{Kohn} using Lemma \ref%
{composition} that the principal symbol of $[a,S_{\delta }]$ has the form 
\begin{equation*}
\lbrack a,S_{\delta }]\sim
\sum_{k=1}^{m}\sum_{j=1}^{n_{k}}-2ia_{x_{j}^{k}}\xi _{j}^{k}\frac{3/2\delta
^{2}}{1+\delta ^{2}|\xi |^{2}}S_{\delta }
\end{equation*}%
By differentiating $\xi _{j}^{k}\frac{3/2\delta ^{2}}{1+\delta ^{2}|\xi |^{2}%
}S_{\delta }$ with respect to $\xi _{r}^{l}$ one can check that the lower
order terms have the form $Q_{\delta }^{-2}S_{\delta }$.
\end{proof}

\section{Apriori estimates\label{Apriori estimates}}

In what follows we establish estimates both for scalar functions $u$ and
vector functions $\mathbf{u}$, as well as scalar operators $L$ (\ref%
{operator}) and linear systems of operators $\mathbf{L}$ (\ref{operator_s}).
We will state the results for systems of equations, with the understanding
that linear equations correspond to the case $N=1$. The proofs for the
scalar or systems cases do not differ in any substantial way, so, for
simplicity, we only include the proof for the scalar case.

The next lemma gives a useful estimate for subelliptic operators. It follows
directly from the definition of subellipticity (\ref{subel}) with the help
of Lemma \ref{general}.

\begin{lemma}
\label{subel_lem} Suppose that $L$, $\mathbf{L}$, given by (\ref{subop}), (%
\ref{subop_s}) respectively, are subelliptic at $x_{0}$ and that $\zeta ,%
\tilde{\zeta}\in C_{0}^{\infty }\left( U\right) $, with $U$ a neighbourhood
of $x_{0}$ as in (\ref{subel}), and $\zeta \prec \tilde{\zeta}$. Then, for
all $\mathbf{u}\in C^{\infty }\left( \mathbb{R}^{n}\right) $,%
\begin{eqnarray}
\left\Vert \zeta \mathbf{u}\right\Vert _{\varepsilon }^{2} &\leq &C\left\{
\left\vert \sum_{p,q=1}^{N}\sum_{i,j=1}^{n}\left( a_{pqij}\zeta \left(
u_{q}\right) _{x_{i}},\zeta \left( u_{p}\right) _{x_{j}}\right) \right\vert
+\left\Vert \tilde{\zeta}\mathbf{u}\right\Vert ^{2}\right\}  \label{epsnorm}
\\
&\leq &C\left\{ \left\vert \left( \zeta \mathbf{Lu},\zeta \mathbf{u}\right)
\right\vert +\left\Vert \tilde{\zeta}\mathbf{u}\right\Vert ^{2}\right\} . 
\notag
\end{eqnarray}
\end{lemma}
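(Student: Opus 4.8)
The estimate (\ref{epsnorm}) is essentially a localized reformulation of the definition of subellipticity (\ref{subel}), so the plan is to apply (\ref{subel}) to the compactly supported test function $\zeta\mathbf{u}$ and then reorganize the resulting inner product. First I would note that $\zeta\mathbf{u}\in\prod_{k=1}^N C_0^\infty(\mathbb{R}^n)$ has support contained in the neighbourhood $U$ where subellipticity holds, so (\ref{subel_s}) gives
\begin{equation*}
\|\zeta\mathbf{u}\|_\varepsilon^2\leq C\left(\left|(\mathbf{L}(\zeta\mathbf{u}),\zeta\mathbf{u})\right|+\|\zeta\mathbf{u}\|^2\right).
\end{equation*}
Since $\|\zeta\mathbf{u}\|\leq\|\tilde\zeta\mathbf{u}\|$ (because $0\le\zeta\le\tilde\zeta\le 1$ on $\mathrm{support}\,\zeta$, using $\zeta\prec\tilde\zeta$), the zeroth-order term is already of the desired form, so the work is entirely in the principal inner product.

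Next I would integrate the second-order principal part of $\mathbf{L}(\zeta\mathbf{u})$ by parts once to convert it into the Dirichlet-type form appearing on the right side of (\ref{epsnorm}): for the scalar model, $(-\mathrm{div}(A\nabla(\zeta u)),\zeta u)=\sum_{i,j}(a_{ij}(\zeta u)_{x_i},(\zeta u)_{x_j})$, and for the system one uses the symmetry hypothesis $a_{pqij}=a_{qpij}$ to write the analogous identity as $\sum_{p,q,i,j}(a_{pqij}\zeta(u_q)_{x_i},\zeta(u_p)_{x_j})$ up to terms where the derivative falls on $\zeta$ rather than on $u$. Those commutator-type error terms, together with the first-order term $\mathbf{b}\nabla(\zeta\mathbf{u})$ and the zeroth-order term $\mathbf{c}\zeta\mathbf{u}$, all have the shape $(P\zeta\mathbf{u}_{x_i},Q\zeta\mathbf{u})$ or $(P\zeta\mathbf{u},Q\zeta\mathbf{u})$ with $P,Q$ pseudodifferential of orders $\le 0$; every such term is controlled by $\|\zeta\mathbf{u}\|^2+\|\tilde\zeta\mathbf{u}\|^2\leq C\|\tilde\zeta\mathbf{u}\|^2$ via Lemma \ref{general} (taking $\eta=\tilde\zeta$, which is legitimate since $\zeta_{x_i}\prec\tilde\zeta$). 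This yields the first inequality in (\ref{epsnorm}).

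For the second inequality I would reverse the integration by parts: the Dirichlet form $\sum(a_{pqij}\zeta(u_q)_{x_i},\zeta(u_p)_{x_j})$ differs from $(\zeta\mathbf{L}\mathbf{u},\zeta\mathbf{u})$ by, first, the lower-order terms $\mathbf{b}\nabla\mathbf{u}$ and $\mathbf{c}\mathbf{u}$ paired against $\zeta^2\mathbf{u}$ — again of the form covered by Lemma \ref{general} and hence bounded by $C\|\tilde\zeta\mathbf{u}\|^2$ — and second, by commutator terms arising from moving $\zeta$ past $\mathrm{div}_k$ and $\nabla_k$, which are once more of the type $(P\zeta\mathbf{u}_{x_i},Q\zeta\mathbf{u})$ with $P,Q$ of order zero and thus absorbed into $C\|\tilde\zeta\mathbf{u}\|^2$ by the same lemma. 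Combining the two passes gives $|(\zeta\mathbf{L}\mathbf{u},\zeta\mathbf{u})|\le C(|\sum(a_{pqij}\zeta(u_q)_{x_i},\zeta(u_p)_{x_j})|+\|\tilde\zeta\mathbf{u}\|^2)$ and, read the other way, completes the chain.

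\textbf{Main obstacle.} The only delicate point is the bookkeeping of the commutator terms generated when $\zeta$ is pushed through the derivatives in the principal part, and — in the system case — the careful use of the symmetry $a_{pqij}=a_{qpij}$ to ensure the cross terms in $p,q$ assemble into a genuine quadratic form rather than an indefinite expression; everything else is a routine application of Lemma \ref{general} with the fixed choice $\eta=\tilde\zeta$. I expect no real difficulty beyond organizing these terms, which is why the paper states only the scalar proof.
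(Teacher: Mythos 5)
Your proposal is correct and takes essentially the same route as the paper: apply the subellipticity inequality to $\zeta \mathbf{u}$, integrate by parts to pass between $\left( \mathbf{L}\left( \zeta \mathbf{u}\right) ,\zeta \mathbf{u}\right) $, the Dirichlet form $\sum \left( a_{pqij}\zeta \left( u_{q}\right) _{x_{i}},\zeta \left( u_{p}\right) _{x_{j}}\right) $, and $\left( \zeta \mathbf{Lu},\zeta \mathbf{u}\right) $, and absorb all commutator and lower-order terms into $C\left\Vert \tilde{\zeta}\mathbf{u}\right\Vert ^{2}$ via Lemma \ref{general} with $\eta =\tilde{\zeta}$ (legitimate since $\zeta _{x_{i}}\prec \tilde{\zeta}$). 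The paper's proof is exactly this two-pass integration-by-parts argument, also carried out only in the scalar case.
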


\begin{proof}
We consider only the scalar case $N=1$. From (\ref{subel}) we have 
\begin{equation}
\left\Vert \zeta u\right\Vert _{\varepsilon }^{2}\leq C\{\left\vert \left(
L\zeta u,\zeta u\right) \right\vert +\left\Vert \zeta u\right\Vert ^{2}\}.
\label{ineq-subellem-00}
\end{equation}%
Next,%
\begin{equation}
\left( L\zeta u,\zeta u\right) =-\sum\limits_{1}^{n}\left( a_{ij}(\zeta
u)_{x_{i}x_{j}},\zeta u\right) +\sum\limits_{1}^{n}\left( b^{i}(\zeta
u)_{x_{i}},\zeta u\right) +\left( c\zeta u,\zeta u\right) .
\label{ineq-subellem-01}
\end{equation}%
Integrating by parts the first term on the right, we have that%
\begin{eqnarray*}
-\left( a_{ij}(\zeta u)_{x_{i}x_{j}},\zeta u\right)
&=&\sum\limits_{1}^{n}\left( \left( a_{ij}\right) _{x_{j}}(\zeta
u)_{x_{i}},\zeta u\right) +\sum\limits_{1}^{n}\left( a_{ij}(\zeta
u)_{x_{i}},\left( \zeta u\right) _{x_{j}}\right) \\
&=&\sum\limits_{1}^{n}\left( \left( a_{ij}\right) _{x_{j}}(\zeta
u)_{x_{i}},\zeta u\right) +2\sum\limits_{1}^{n}\left( a_{ij}\zeta
u_{x_{j}},\zeta _{x_{i}}u\right) \\
&&+\sum\limits_{1}^{n}\left( a_{ij}\zeta _{x_{i}}u,\zeta _{x_{j}}u\right)
+\sum\limits_{1}^{n}\left( a_{ij}\zeta u_{x_{i}},\zeta u_{x_{j}}\right) .
\end{eqnarray*}%
By Lemma \ref{general} the first and second terms on the right are bounded
by $C||\tilde{\zeta}u||^{2}$, while it is clear that the third term on the
right also satisfies the same bounds. The same applies to the last two terms
on the right of (\ref{ineq-subellem-01}), This and (\ref{ineq-subellem-00})
yield 
\begin{equation*}
\left\Vert \zeta u\right\Vert _{\varepsilon }^{2}\leq C\{\left\vert
\sum\limits_{i,j=1}^{n}\left( a_{ij}\zeta u_{x_{i}},\zeta u_{x_{j}}\right)
\right\vert +\left\Vert \zeta u\right\Vert ^{2}\}.
\end{equation*}%
prove the first inequality in (\ref{epsnorm}).

To prove the second inequality in (\ref{epsnorm}), integrating by parts we
write%
\begin{eqnarray*}
\sum \left( {a}_{ij}{\zeta u_{x_{i}},\zeta u_{x_{j}}}\right) &=&-\sum \left( 
{a}_{ij}{\zeta u_{x_{i}x_{j}},\zeta u}\right) -\sum \left( \left( {a}_{ij}{%
\zeta }^{2}\right) _{x_{j}}\tilde{\zeta}{u_{x_{i}},}\tilde{\zeta}{u}\right)
\\
&=&\left( \zeta Lu,\zeta u\right) -\left( c\zeta u,\zeta u\right)
-\sum\limits_{1}^{n}\left( b^{i}\zeta u_{x_{i}},\zeta u\right) \\
&&-\sum \left( \left( {a}_{ij}{\zeta }^{2}\right) _{x_{j}}\tilde{\zeta}{%
u_{x_{i}},}\tilde{\zeta}{u}\right) ,
\end{eqnarray*}%
and apply Lemma \ref{general} the the last two terms on the right.
\end{proof}

Next, we will establish a number of auxiliary results which will be used to
prove the main a priori estimate and the main theorem.

The following lemma is a generalization of Lemma \ref{subel_lem} for the
operators and systems of operators of the form (\ref{operator}), (\ref%
{operator_s}).

\begin{lemma}
\label{inter_lem}Let $\mathbf{L}$ be defined by (\ref{operator_s}) (or by (%
\ref{operator}) when $N=1$), with $\mathbf{L}_{k}$ subelliptic at $%
x_{0}^{k}\in \mathbb{R}^{n_{k}}$ and $0\leq \lambda _{k}\in C^{\infty
}\left( \mathbb{R}^{n_{k}}\right) $, $k=1,\cdots ,m$. Let $U_{k}\subset 
\mathbb{R}^{n_{k}}$ be neighbourhoods of $x_{0}^{k}$ such that (\ref{subel_s}%
) holds with $\varepsilon _{k}$ for $\mathbf{L}_{k}$ in $U_{k}$ (resp. (\ref%
{subel}) holds with $\varepsilon _{k}$ for $L_{k}$ in $U_{k}$). Then if $%
\zeta ,\tilde{\zeta}\in C_{0}^{\infty }\left( \prod_{k=1}^{m}U_{k}\right) $,
with $\zeta \prec \tilde{\zeta}$, for $\varepsilon =\min_{k}\varepsilon _{k}$
we have%
\begin{equation}
\begin{tabular}{rcl}
$\sum\limits_{k=1}^{m}\left\Vert \sqrt{\lambda _{k}}\Lambda
_{x^{k}}^{\varepsilon }\left( \zeta \mathbf{u}\right) \right\Vert ^{2}$ & $%
\leq $ & $C\sum\limits_{k=1}^{m}\sum\limits_{i,j=1}^{n_{k}}\sum%
\limits_{p,q=1}^{N}\left( \zeta \lambda _{k}a_{pqij}^{k}\left( u_{q}\right)
_{x_{i}^{k}},\zeta \left( u_{p}\right) _{x_{j}^{k}}\right) $ \\ 
&  & +$C\left\Vert \tilde{\zeta}\mathbf{u}\right\Vert ^{2}$ \\ 
& $\leq $ & $C\left\{ \left\vert \left( \zeta \mathbf{Lu},\zeta \mathbf{u}%
\right) \right\vert +\left\Vert \tilde{\zeta}\mathbf{u}\right\Vert
^{2}\right\} .$%
\end{tabular}
\label{est1}
\end{equation}%
Moreover, the same estimate holds when $\mathbf{u}\in \prod_{k=1}^{N}H^{2}(%
\mathbb{R}^{n})$.
\end{lemma}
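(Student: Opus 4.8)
The plan is to reduce the estimate \eqref{est1} to $m$ separate applications of Lemma~\ref{subel_lem}, one for each subelliptic operator $\mathbf{L}_k$, and then to reassemble these using the divergence structure of $\mathbf{L}=\sum_k \lambda_k\mathbf{L}_k$. First I would fix $k$ and apply Lemma~\ref{subel_lem} to $\mathbf{L}_k$ in the variables $x^k$, treating $\overline{x^k}$ as a parameter: since $\mathbf{L}_k$ is subelliptic at $x_0^k$ in $\mathbb{R}^{n_k}$ with parameter $\varepsilon_k$, the first inequality of \eqref{epsnorm} applied in the $x^k$-slice gives, after integrating in $\overline{x^k}$,
\begin{equation*}
\left\Vert \Lambda_{x^k}^{\varepsilon_k}(\zeta\mathbf{u})\right\Vert^2 \le C\left\{ \left\vert \sum_{i,j=1}^{n_k}\sum_{p,q=1}^{N}\left(a_{pqij}^k\zeta(u_q)_{x_i^k},\zeta(u_p)_{x_j^k}\right)\right\vert + \left\Vert\tilde\zeta\mathbf{u}\right\Vert^2\right\}.
\end{equation*}
The cross terms produced by moving $\zeta$ past the derivatives are handled exactly as in Lemma~\ref{subel_lem} by Lemma~\ref{general}, absorbing into $C\Vert\tilde\zeta\mathbf{u}\Vert^2$; here one uses $\zeta\prec\tilde\zeta$ and that $\zeta,\tilde\zeta\in C_0^\infty(\prod U_k)$ so the slice-wise subellipticity neighbourhood $U_k$ contains the relevant support.

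Next I would multiply through by $\lambda_k(\overline{x^k})\ge 0$, which is legitimate because $\lambda_k$ depends only on $\overline{x^k}$ and therefore commutes with $\Lambda_{x^k}^{\varepsilon_k}$ and with the $x^k$-derivatives; this yields
\begin{equation*}
\left\Vert \sqrt{\lambda_k}\,\Lambda_{x^k}^{\varepsilon_k}(\zeta\mathbf{u})\right\Vert^2 \le C\left\{ \left\vert \sum_{i,j=1}^{n_k}\sum_{p,q=1}^{N}\left(\lambda_k a_{pqij}^k\zeta(u_q)_{x_i^k},\zeta(u_p)_{x_j^k}\right)\right\vert + \left\Vert\tilde\zeta\mathbf{u}\right\Vert^2\right\},
\end{equation*}
where the error terms from commuting $\sqrt{\lambda_k}$ through the cutoff derivatives are again of lower order and absorbed via Lemma~\ref{general} (the smoothness and boundedness of $\lambda_k$ near $\mathrm{support}\,\tilde\zeta$ is what makes these terms legitimate pseudodifferential operators of the right order). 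Replacing $\varepsilon_k$ by the smaller $\varepsilon=\min_k\varepsilon_k$ on the left only weakens the inequality. Summing over $k=1,\dots,m$ gives the first inequality of \eqref{est1}.

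For the second inequality I would integrate by parts in the quadratic form, slice by slice, mimicking the second half of the proof of Lemma~\ref{subel_lem}: for each $k$,
\begin{equation*}
\sum_{i,j,p,q}\left(\lambda_k a_{pqij}^k\zeta(u_q)_{x_i^k},\zeta(u_p)_{x_j^k}\right) = \left(\zeta\lambda_k\mathbf{L}_k\mathbf{u},\zeta\mathbf{u}\right) - \left(\lambda_k\mathbf{b}^k\nabla_k(\zeta\mathbf{u}),\zeta\mathbf{u}\right) - \left(\lambda_k\mathbf{c}^k\zeta\mathbf{u},\zeta\mathbf{u}\right) + (\text{lower order}),
\end{equation*}
where the lower-order remainder comes from differentiating $\lambda_k\zeta^2$ and is controlled by $C\Vert\tilde\zeta\mathbf{u}\Vert^2$ through Lemma~\ref{general}; the symmetry hypothesis $a_{pqij}^k=a_{qpij}^k$ is used exactly here to make the integration by parts symmetric in $p,q$. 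Summing over $k$ collapses $\sum_k\zeta\lambda_k\mathbf{L}_k\mathbf{u}=\zeta\mathbf{Lu}$, and the first-order and zeroth-order terms are bounded by $C\Vert\tilde\zeta\mathbf{u}\Vert^2$. Finally, the extension to $\mathbf{u}\in\prod H^2(\mathbb{R}^n)$ is the density argument already used in Lemma~\ref{general}: approximate by the mollified sequence $\widehat{u_n}(\xi)=e^{-|\xi|^2/n^2}\widehat{u}(\xi)$, apply the estimate for each $u_n$, and pass to the limit, noting that $H^2$ regularity is more than enough to make every term in the quadratic form and its integration-by-parts rearrangement converge. The main obstacle I anticipate is purely bookkeeping: tracking that every time a cutoff or a $\sqrt{\lambda_k}$ is commuted past a derivative the resulting operator is genuinely of order one less and is supported where $\tilde\zeta\equiv 1$, so that Lemma~\ref{general} applies with the stated cutoff pair; the structural fact that $\lambda_k$ is independent of $x^k$ is what keeps these commutators harmless and prevents any genuine analytic difficulty at this stage.
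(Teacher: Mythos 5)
Your proposal is correct and follows essentially the same route as the paper's proof: a slice-wise application of the subelliptic estimate of Lemma \ref{subel_lem} in the $x^{k}$ variables, weighted by $\lambda _{k}(\overline{x^{k}})\geq 0$ (just note that the multiplication by $\lambda _{k}$ must be done on each slice \emph{before} integrating in $\overline{x^{k}}$, a harmless reordering of your write-up, and is immediate precisely because $\lambda _{k}$ is constant in $x^{k}$), then integration and summation over $k$, followed by integration by parts plus Lemma \ref{general} for the second inequality, and the same mollification/density argument for the extension to $\prod_{k=1}^{N}H^{2}(\mathbb{R}^{n})$.
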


\begin{proof}
It is enough to consider the scalar case $N=1$. First, consider $\mathbf{u}%
\in \prod_{k=1}^{N}C^{\infty }(\mathbb{R}^{n})$. Since for each $k$ the
operator $L_{k}$ is subelliptic it follows from (\ref{epsnorm}) that for
each fixed $\overline{x^{k}}\in \mathbb{R}^{n_{k}}$ we have%
\begin{equation*}
\begin{split}
\lambda _{k}& (\overline{x^{k}})\int\limits_{\mathbb{R}^{n_{k}}}|\Lambda
_{x^{k}}^{\varepsilon }(\zeta u)(x)|^{2}\,dx^{k} \\
& \leq C\sum\limits_{i,j=1}^{n_{k}}\int\limits_{\mathbb{R}^{n_{k}}}\lambda
_{k}(\overline{x^{k}})a_{ij}^{k}(x^{k})\zeta (x)u_{x_{i}^{k}}(x)\zeta
(x)u_{x_{j}^{k}}(x)\,dx^{k} \\
& +C\int_{\mathbb{R}^{n_{k}}}\left\vert \tilde{\zeta}\left( x\right) u\left(
x\right) \right\vert ^{2}~dx^{k}.
\end{split}
\label{int}
\end{equation*}%
Integrating the above inequality with respect to $\overline{x^{k}}$ and
summing over $k=1,\cdots ,m$ we obtain the first part of (\ref{est1}). To
show that the inequality holds for $\mathbf{u}\in \prod_{k=1}^{N}H^{2 }(%
\mathbb{R}^{n})$ we perform an approximation in the same way it has been
done in the proof of Lemma \ref{general}.

To prove the second part consider each term of the triple sum in the first
inequality of the lemma and integrate by parts 
\begin{equation}
(\lambda _{k}a_{ij}^{k}\zeta u_{x_{i}^{k}},\zeta u_{x_{j}^{k}})=-\left(
\lambda _{k}a_{ij}^{k}\zeta u_{x_{i}^{k}x_{j}^{k}},\zeta u\right) -\left(
\left( \lambda _{k}a_{ij}^{k}\zeta ^{2}\right) _{x_{j}^{k}}u_{x_{i}^{k}},%
\tilde{\zeta}u\right) .  \label{parts1}
\end{equation}%
We then have%
\begin{eqnarray*}
\sum_{k=1}^{m}\sum_{i,j=1}^{n_{k}}\left( \lambda _{k}a_{ij}^{k}\zeta
u_{x_{i}^{k}},\zeta u_{x_{j}^{k}}\right) &=&\left( \zeta Lu,\zeta u\right)
-\sum_{k=1}^{m}\sum_{i=1}^{n_{k}}\left( \lambda _{k}b_{i}^{k}\zeta
u_{x_{i}^{k}},\zeta u\right) \\
&&-\sum_{k=1}^{m}\left( \lambda _{k}c^{k}\zeta u,\zeta u\right)
-\sum_{k=1}^{m}\sum_{i,j=1}^{n_{k}}\left( \left( \lambda _{k}a_{ij}^{k}\zeta
^{2}\right) _{x_{j}^{k}}u_{x_{i}^{k}},\tilde{\zeta}u\right) .
\end{eqnarray*}%
The second inequality of the Lemma \ref{inter_lem} then follows from Lemma %
\ref{general}.
\end{proof}

We now formulate the main technical result which allows us to deal with
terms involving commutators $[L,\Lambda ^{s}\zeta ]$. The proof relies on
Lemma \ref{general} and Lemma \ref{composition}.

\begin{lemma}
\label{hard}Given $s\in \mathbb{R}$, there exists $C>0$ such that%
\begin{equation*}
\left\vert \left( \tilde{\zeta}\left[ \mathbf{L},\Lambda ^{s}S_{\delta
}\zeta \right] \mathbf{u},\tilde{\zeta}\Lambda ^{s}S_{\delta }\zeta \mathbf{u%
}\right) \right\vert \leq C\left\{ \left\Vert \tilde{\zeta}S_{\delta }\zeta 
\mathbf{u}\right\Vert _{s}^{2}+\sum\limits_{k=1}^{m}\left\Vert \lambda
_{k}\zeta _{0}^{k}S_{\delta }\zeta _{0}^{k}\mathbf{u}\right\Vert
_{s}^{2}+\left\Vert \zeta ^{\prime }S_{\delta }\zeta ^{\prime }\mathbf{u}%
\right\Vert _{s-1/2}^{2}\right\}
\end{equation*}%
for all functions $\mathbf{u}\in \prod_{k=1}^{N}H^{s-2}(\mathbb{R}^{n})$ and
all $0<\delta \leq 1$. Here $\zeta ,\tilde{\zeta}$, and $\zeta _{0}$ are the
cutoff functions defined above.
\end{lemma}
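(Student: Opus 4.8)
The plan is to expand the commutator $[\mathbf{L},\Lambda^s S_\delta\zeta]$ into a sum of terms, each of which will be controlled either by pseudodifferential calculus (Lemma \ref{composition}) or by the integration-by-parts estimate of Lemma \ref{general}. Working in the scalar case $N=1$, write $T_\delta=\Lambda^s S_\delta$, a pseudodifferential operator of order $s$ uniformly in $\delta\le 1$ by Lemma \ref{properties_S}(\ref{0_S}). Since $L=\sum_k\lambda_k L_k$ and $\lambda_k=\lambda_k(\overline{x^k})$ depends only on the variables omitted from $x^k$, the operator $\lambda_k$ commutes with every $x^k$-derivative; thus $[L,T_\delta\zeta]=\sum_k\bigl(\lambda_k[L_k,T_\delta\zeta]+[\lambda_k,T_\delta\zeta]L_k\bigr)$. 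I would handle the two pieces separately.

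For the first piece, $[L_k,T_\delta\zeta]$: since $L_k$ is a second-order operator in $x^k$ with smooth coefficients, and $T_\delta\zeta$ has a symbol supported (in $x$) in $\mathrm{support}\,\zeta$, Lemma \ref{composition} shows the commutator is a sum of terms of the form $c(x)\,\partial_{x^k}\,R_\delta^{s}\zeta'$ (first-order in $x^k$-derivatives, symbol of order $s$) plus terms of order $s$ with no derivative, plus a remainder of order $s-1$; the $\zeta$-derivatives that arise are supported where $\zeta'\equiv 1$. Pairing $\lambda_k[L_k,T_\delta\zeta]u$ against $\tilde\zeta T_\delta\zeta u$ and invoking Lemma \ref{general} (with $P$ absorbing the $x^k$-derivative and the order-$s$ factor, $Q=\Lambda^s$ composed with the smoothing, total orders summing appropriately) converts each such pairing into $C(\|\tilde\zeta S_\delta\zeta u\|_s^2+\|\zeta' S_\delta\zeta' u\|_{s-1/2}^2)$, after using that $S_\delta$ is self-adjoint of order $0$ and commutes with $\Lambda^s$, and that half a derivative is gained relative to $s$ whenever the full weight $\lambda_k$ is not present, i.e. on the terms where a cutoff derivative has been produced.

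For the second piece, $[\lambda_k,T_\delta\zeta]L_k$: write $[\lambda_k,T_\delta\zeta]=\lambda_k[\cdot,T_\delta]\zeta\text{-type terms}$; more precisely $T_\delta\zeta$ differentiates in all variables, and Lemma \ref{properties_S}(\ref{4_S}) gives $[\lambda_k,S_\delta]=\sum_{l,j}(\partial_{x_j^l}\lambda_k)\partial_{x_j^l}R_\delta^{-2}S_\delta+Q_\delta^{-2}S_\delta$; since $\partial_{x^k}\lambda_k=0$, only $l\ne k$ survives in the leading sum, so the commutator gains two orders and lands on the cutoffs $\zeta_0^k$ where $\nabla_{x^k}\zeta$ (hence the relevant localization) lives. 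Composing with the second-order operator $L_k$ (which carries $x^k$-derivatives that the gained smoothness in the $l\ne k$ directions does not directly absorb) and pairing against $\tilde\zeta T_\delta\zeta u$ reduces — again via Lemma \ref{general} and the support relations $\zeta_0^k\prec\tilde\zeta$, $\tilde\zeta_0^k\prec\zeta'$ — to the terms $\|\lambda_k\zeta_0^k S_\delta\zeta_0^k u\|_s^2$ and $\|\zeta' S_\delta\zeta' u\|_{s-1/2}^2$; the factor $\lambda_k$ is retained because it was never differentiated away on the diagonal $x^k$-derivative contributions and can be pulled through $S_\delta$ up to lower-order commutator errors.

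The main obstacle I anticipate is the careful bookkeeping of \emph{which} cutoff supports each error term, i.e. verifying that every commutator in which a derivative falls on $\zeta$ is supported where $\tilde\zeta$ or $\zeta'$ equals $1$, and that every $\lambda_k$-weighted term genuinely localizes to $\zeta_0^k$ so that the weight $\lambda_k$ can legitimately be kept on the right-hand side (this is exactly the subtlety, flagged in the introduction, that does not arise when $m=2$). A secondary technical point is justifying the estimate for merely $u\in H^{s-2}$ rather than $C^\infty$: as in the proof of Lemma \ref{general}, one approximates $u$ by $u_n$ with $\widehat{u_n}(\xi)=e^{-|\xi|^2/n^2}\widehat u(\xi)$, notes that $L_k$ raises the order by $2$ so all pairings make sense with $s-2$ regularity, applies the smooth estimate to $u_n$, and passes to the limit using the uniform-in-$\delta$ boundedness from Lemma \ref{properties_S}.
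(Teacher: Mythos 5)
Your overall scaffolding (commutator expansion, symbol calculus, Lemma \ref{general}, mollification to treat $u\in H^{s-2}$) follows the same general strategy as the paper, but the proposal has a genuine gap precisely where the lemma is hard. First, every term on the right-hand side of the lemma carries the smoothing operator $S_{\delta }$, and this is essential: Lemma \ref{general} as stated yields bounds of the form $\left\Vert \zeta u\right\Vert _{(p+q)/2}^{2}+\left\Vert \eta u\right\Vert _{(p+q)/2}^{2}$ with no $S_{\delta }$ and no $\lambda _{k}$-weight, and for $u\in H^{s-2}$ these quantities are in general infinite, while the later argument (letting $\delta \rightarrow 0$) needs bounds uniform in $\delta $ in terms of mollified norms. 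The paper achieves this through the identity (\ref{comm_lemm1}), which writes $\left[ \Lambda ^{s}S_{\delta },f\right] $ as uniformly bounded operators of order $0$ composed with $\Lambda ^{s-2}$, first-order derivative factors, and $S_{\delta }$ kept on the right, and only then applies Lemma \ref{general} to the mollified function $S_{\delta }\zeta u$. Your sketch asserts the conversion "into $C\left( \left\Vert \tilde{\zeta}S_{\delta }\zeta u\right\Vert _{s}^{2}+\left\Vert \zeta ^{\prime }S_{\delta }\zeta ^{\prime }u\right\Vert _{s-1/2}^{2}\right) $" without supplying this mechanism.

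Second, and more seriously, the genuinely difficult contribution is the term $-2\Lambda ^{s}S_{\delta }\lambda _{k}a_{ij}^{k}\zeta _{x_{i}^{k}}\partial _{x_{j}^{k}}u$ coming from $\left[ L_{k},\zeta \right] $ (term $VI$ in the paper), where one must recover the \emph{full} weight $\lambda _{k}$ in $\left\Vert \lambda _{k}\zeta _{0}^{k}S_{\delta }\zeta _{0}^{k}u\right\Vert _{s}^{2}$ while conceding only half a derivative in the unweighted error. Your own decomposition inevitably produces terms weighted by derivatives of $\lambda _{k}$ (you note the surviving $\partial _{x^{l}}\lambda _{k}$, $l\neq k$), and for a general nonnegative smooth $\lambda _{k}$ one cannot bound $\left\vert \nabla \lambda _{k}\right\vert $ by $C\lambda _{k}$. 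The paper's resolution is a seven-term integration-by-parts decomposition of $VI$, in which one factor is commuted across the inner product so that the square $\left( \left( \lambda _{k}\right) _{x}\right) ^{2}$ (and likewise $\left( \zeta _{x_{i}^{k}}\right) ^{2}$) appears, and then the Wirtinger-type inequality $\left\vert D\phi \right\vert ^{2}\leq C\phi $ of (\ref{ineq-Wirtinger}) is applied to the nonnegative functions $\lambda _{k}$ and $\zeta $; this is exactly what turns derivative-of-$\lambda _{k}$ weights back into $\lambda _{k}$ and produces the $s-1/2$ norm. None of this appears in your proposal; the statements "the factor $\lambda _{k}$ is retained because it was never differentiated away" and "half a derivative is gained" are precisely the claims that require this machinery. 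A further unsupported step is the assertion that $\left[ \lambda _{k},T_{\delta }\zeta \right] L_{k}$ "lands on the cutoffs $\zeta _{0}^{k}$": in the paper the $\zeta _{0}^{k}$-localization comes from $\zeta _{x_{i}^{k}}=\zeta _{0}^{k}\zeta _{x_{i}^{k}}$, i.e. from $x^{k}$-derivatives of the cutoff, not from the commutator with $\lambda _{k}$.
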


\begin{proof}
Again, it is enough to consider the scalar case $N=1$. We have%
\begin{eqnarray}
\left[ L,\Lambda ^{s}S_{\delta }\zeta \right] u &=&\left[ L,\Lambda
^{s}S_{\delta }\right] \zeta u+\Lambda ^{s}S_{\delta }\left[ L,\zeta \right]
u  \notag \\
&=&\sum\limits_{k=1}^{m}\left[ \lambda _{k}L_{k},\Lambda ^{s}S_{\delta }%
\right] \zeta u+\sum\limits_{k=1}^{m}\Lambda ^{s}S_{\delta }\left[ \lambda
_{k}L_{k},\zeta \right] u.  \label{triv}
\end{eqnarray}%
Next, for any $k=1,\cdots ,m$,%
\begin{equation}
\begin{tabular}{rcl}
$\left[ \lambda _{k}L_{k},\Lambda ^{s}S_{\delta }\right] $ & $=$ & $%
-\sum\limits_{i,j=1}^{n_{k}}\left[ \lambda _{k}a_{ij}^{k},\Lambda
^{s}S_{\delta }\right] \dfrac{\partial ^{2}}{\partial x_{i}^{k}\partial
x_{j}^{k}}$ \\ 
&  & $+\sum\limits_{i=1}^{n_{k}}\left[ \lambda _{k}b_{i}^{k},\Lambda
^{s}S_{\delta }\right] \dfrac{\partial }{\partial x_{i}^{k}}+\left[ \lambda
_{k}c^{k},\Lambda ^{s}S_{\delta }\right] .$%
\end{tabular}
\label{comm1}
\end{equation}%
and%
\begin{equation}
\begin{tabular}{rcl}
$\left[ \lambda _{k}L_{k},\zeta \right] $ & $=$ & $\lambda _{k}\left[
L_{k},\zeta \right] $ \\ 
& $=$ & $\lambda _{k}\left( -\sum\limits_{i,j=1}^{n_{k}}a_{ij}^{k}\zeta
_{x_{i}^{k}x_{j}^{k}}+\sum\limits_{i=1}^{n_{k}}b_{i}^{k}\zeta
_{x_{i}^{k}}\right) -2\lambda _{k}\sum\limits_{i,j=1}^{n_{k}}a_{ij}^{k}\zeta
_{x_{i}^{k}}\dfrac{\partial }{\partial x_{j}^{k}},$%
\end{tabular}
\label{comm2}
\end{equation}%
where we used the symmetry of $a^{k}$ on the last term. It follows that%
\begin{eqnarray}
&&\left( \tilde{\zeta}\left[ L,\Lambda ^{s}S_{\delta }\zeta \right] u,\tilde{%
\zeta}\Lambda ^{s}S_{\delta }\zeta u\right)   \notag \\
&=&-\sum\limits_{k=1}^{m}\sum\limits_{i,j=1}^{n_{k}}\left( \tilde{\zeta}%
\left[ \lambda _{k}a_{ij}^{k},\Lambda ^{s}S_{\delta }\right] \dfrac{\partial
^{2}}{\partial x_{i}^{k}\partial x_{j}^{k}}\zeta u,\tilde{\zeta}\Lambda
^{s}S_{\delta }\zeta u\right)   \notag \\
&&+\sum\limits_{k=1}^{m}\sum\limits_{i=1}^{n_{k}}\left( \tilde{\zeta}\left[
\lambda _{k}b_{i}^{k},\Lambda ^{s}S_{\delta }\right] \dfrac{\partial }{%
\partial x_{i}^{k}}\zeta u,\tilde{\zeta}\Lambda ^{s}S_{\delta }\zeta
u\right)   \notag \\
&&+\sum\limits_{k=1}^{m}\left( \tilde{\zeta}\left[ \lambda _{k}c^{k},\Lambda
^{s}S_{\delta }\right] \zeta u,\tilde{\zeta}\Lambda ^{s}S_{\delta }\zeta
u\right)   \notag \\
&&-\sum\limits_{k=1}^{m}\sum\limits_{i,j=1}^{n_{k}}\left( \tilde{\zeta}%
\Lambda ^{s}S_{\delta }\lambda _{k}a_{ij}^{k}\zeta _{x_{i}^{k}x_{j}^{k}}u,%
\tilde{\zeta}\Lambda ^{s}S_{\delta }\zeta u\right)   \notag \\
&&+\sum\limits_{k=1}^{m}\sum\limits_{i=1}^{n_{k}}\left( \tilde{\zeta}\Lambda
^{s}S_{\delta }\lambda _{k}b_{i}^{k}\zeta _{x_{i}^{k}}u,\tilde{\zeta}\Lambda
^{s}S_{\delta }\zeta u\right)   \notag \\
&&-2\sum\limits_{k=1}^{m}\sum\limits_{i,j=1}^{n_{k}}\left( \tilde{\zeta}%
\Lambda ^{s}S_{\delta }\lambda _{k}a_{ij}^{k}\zeta _{x_{i}^{k}}\dfrac{%
\partial }{\partial x_{j}^{k}}u,\tilde{\zeta}\Lambda ^{s}S_{\delta }\zeta
u\right)   \notag \\
&=&I+II+III+IV+V+VI.  \label{comm_lemm-00}
\end{eqnarray}%
Using Lemma \ref{composition} and property (\ref{4_S}) of the operator $%
S_{\delta }$, for a smooth function $f$ we have%
\begin{equation}
\left[ \Lambda ^{s}S_{\delta },f\right] =\widetilde{R}_{s,\delta
}^{0}\Lambda ^{s-1}f_{0}S_{\delta }=R_{s,\delta }^{0}\Lambda
^{s-2}\sum_{r=1}^{m}\sum_{\ell =1}^{n_{k}}f_{x_{\ell }^{r}}\frac{\partial }{%
\partial x_{\ell }^{r}}S_{\delta }+Q_{s,\delta }^{-0}\Lambda
^{s-2}f_{0}S_{\delta },  \label{comm_lemm1}
\end{equation}%
where $\widetilde{R}_{s,\delta }^{0},R_{s,\delta }^{0},Q_{s,\delta }^{0}$
are pseudodifferential operators of order $0$ uniformly in $0\leq \delta
\leq 1$, and $f_{0}$ is a cutoff function such that $f_{0}\succ \left\vert
\nabla f\right\vert $. Moreover, $R_{s,\delta }^{0}-\left( R_{s,\delta
}^{0}\right) ^{\ast }$ is of degree $-1$ uniformly in $0\leq \delta \leq 1$.

We use this to estimate the first term on the right in (\ref{comm1}), we
obtain%
\begin{equation}
\begin{tabular}{rcl}
$\left\vert I\right\vert $ & $\leq $ & $\sum\limits_{k,r=1}^{m}\sum%
\limits_{i,j,\ell =1}^{n_{k}}\left\vert \left( \tilde{\zeta}R_{s,\delta
}^{0}\Lambda ^{s-2}\left( \lambda _{k}a_{ij}^{k}\right) _{x_{\ell }^{r}}%
\dfrac{\partial ^{2}}{\partial x_{i}^{k}\partial x_{j}^{k}}\frac{\partial }{%
\partial x_{\ell }^{r}}S_{\delta }\zeta u,\tilde{\zeta}\Lambda ^{s}S_{\delta
}\zeta u\right) \right\vert $ \\ 
&  & $+\sum\limits_{k=1}^{m}\sum\limits_{i,j=1}^{n_{k}}\left\vert \left( 
\tilde{\zeta}Q_{s,\delta }^{-0}\Lambda ^{s-2}\left( \lambda
_{k}a_{ij}^{k}\right) _{0}\dfrac{\partial ^{2}}{\partial x_{i}^{k}\partial
x_{j}^{k}}S_{\delta }\zeta u,\tilde{\zeta}\Lambda ^{s}S_{\delta }\zeta
u\right) \right\vert .$%
\end{tabular}
\label{comm_lemm4}
\end{equation}%
We apply to each term on the the first sum Lemma \ref{general} with 
\begin{equation*}
P=\tilde{\zeta}R_{s,\delta }^{0}\Lambda ^{s-2}\left( \lambda
_{k}a_{ij}^{k}\right) _{x_{\ell }^{r}}\dfrac{\partial ^{2}}{\partial
x_{i}^{k}\partial x_{j}^{k}},\qquad \text{and}\qquad Q=\tilde{\zeta}\Lambda
^{s}.
\end{equation*}
Note, that both operators $P$ and $Q$ have order $s$, and since $u\in
H^{s-2} $ it follows that $S_\delta u\in H^{s+1}$ and therefore Lemma \ref%
{general} is applicable. Since the second term on the right of (\ref%
{comm_lemm4}) is dominated by $C\left\{ \left\Vert \tilde{\zeta}S_{\delta
}\zeta u\right\Vert _{s}^{2}+\left\Vert S_{\delta }\zeta u\right\Vert
_{s-1}^{2}\right\} $, we obtain 
\begin{equation}
\left\vert I\right\vert \leq C\left( \left\Vert \tilde{\zeta}S_{\delta
}\zeta u\right\Vert _{s}^{2}+\left\Vert \zeta ^{\prime }S_{\delta }\zeta
u\right\Vert _{s-1/2}^{2}\right) .  \label{comm_lemm-I}
\end{equation}

By the first identity in (\ref{comm_lemm1}) it follows that%
\begin{eqnarray}
\left\vert II\right\vert +\left\vert III\right\vert &\leq
&\sum\limits_{k=1}^{m}\sum\limits_{i=1}^{n_{k}}\left\vert \left( \tilde{\zeta%
}\widetilde{R}_{s,\delta }^{0}\Lambda ^{s-1}\left( \lambda
_{k}b_{i}^{k}\right) _{0}\dfrac{\partial }{\partial x_{i}^{k}}S_{\delta
}\zeta u,\tilde{\zeta}\Lambda ^{s}S_{\delta }\zeta u\right) \right\vert 
\notag \\
&&+\sum\limits_{k=1}^{m}\left\vert \left( \tilde{\zeta}\widetilde{R}%
_{s,\delta }^{0}\Lambda ^{s-1}\left( \lambda _{k}c^{k}\right) _{0}S_{\delta
}\zeta u,\tilde{\zeta}\Lambda ^{s}S_{\delta }\zeta u\right) \right\vert 
\notag \\
&\leq &C\left( \left\Vert \tilde{\zeta}S_{\delta }\zeta u\right\Vert
_{s}^{2}+\left\Vert \zeta ^{\prime }S_{\delta }\zeta u\right\Vert
_{s-1}^{2}\right) .  \label{comm_lemm-II-III}
\end{eqnarray}

Using that $\zeta _{x_{i}^{k}x_{j}^{k}}=\zeta _{0}^{k}$ $\zeta
_{x_{i}^{k}x_{j}^{k}}$ and $\zeta _{x_{i}^{k}}=\zeta _{0}^{k}\zeta
_{x_{i}^{k}}$, and (\ref{comm_lemm1}), it follows that%
\begin{eqnarray}
\left\vert IV\right\vert +\left\vert V\right\vert &\leq
&\sum\limits_{k=1}^{m}\sum\limits_{i,j=1}^{n_{k}}\left\vert \left( \tilde{%
\zeta}\Lambda ^{s}S_{\delta }a_{ij}^{k}\zeta _{x_{i}^{k}x_{j}^{k}}\lambda
_{k}\zeta _{0}^{k}u,\tilde{\zeta}\Lambda ^{s}S_{\delta }\zeta u\right)
\right\vert  \notag \\
&&+\sum\limits_{k=1}^{m}\sum\limits_{i=1}^{n_{k}}\left\vert \left( \tilde{%
\zeta}\Lambda ^{s}S_{\delta }b_{i}^{k}\zeta _{x_{i}^{k}}\lambda _{k}\zeta
_{0}^{k}u,\tilde{\zeta}\Lambda ^{s}S_{\delta }\zeta u\right) \right\vert 
\notag \\
&\leq &C\left( \left\Vert \tilde{\zeta}S_{\delta }\zeta u\right\Vert
_{s}^{2}+\sum\limits_{k=1}^{m}\left\Vert \lambda _{k}\zeta _{0}^{k}S_{\delta
}\zeta _{0}^{k}u\right\Vert _{s}^{2}+\left\Vert \tilde{\zeta}S_{\delta }%
\tilde{\zeta}u\right\Vert _{s-1}^{2}\right) .  \label{comm_lemm-IV-V}
\end{eqnarray}

Now, for each term in $VI$ we commute the functions $\lambda
_{k}a_{ij}^{k}\zeta _{x_{i}^{k}}$ and $\zeta $, and carry out an integration
by parts. We obtain the identity%
\begin{eqnarray}
~VI^{kij} &=&2\left( \tilde{\zeta}\Lambda ^{s}S_{\delta }\lambda
_{k}a_{ij}^{k}\zeta _{x_{i}^{k}}\dfrac{\partial }{\partial x_{j}^{k}}u,%
\tilde{\zeta}\Lambda ^{s}S_{\delta }\zeta u\right)  \notag \\
&=&-\left( \tilde{\zeta}\Lambda ^{s}S_{\delta }\zeta u,\tilde{\zeta}\Lambda
^{s}S_{\delta }\left( \lambda _{k}a_{ij}^{k}\zeta _{x_{i}^{k}}\right)
_{x_{j}^{k}}u\right)  \label{VI-kij} \\
&&-2\left( \tilde{\zeta}\Lambda ^{s}S_{\delta }\zeta u,\tilde{\zeta}%
_{x_{j}^{k}}\Lambda ^{s}S_{\delta }\lambda _{k}a_{ij}^{k}\zeta
_{x_{i}^{k}}u\right)  \notag \\
&&-\left( \tilde{\zeta}\Lambda ^{s}S_{\delta }\zeta _{x_{j}^{k}}u,\tilde{%
\zeta}\Lambda ^{s}S_{\delta }\lambda _{k}a_{ij}^{k}\zeta _{x_{i}^{k}}u\right)
\notag \\
&&+\left( \tilde{\zeta}\left[ \zeta ,\Lambda ^{s}S_{\delta }\right] \tilde{%
\zeta}\dfrac{\partial }{\partial x_{j}^{k}}u,\tilde{\zeta}\Lambda
^{s}S_{\delta }\lambda _{k}a_{ij}^{k}\zeta _{x_{i}^{k}}u\right)  \notag \\
&&+\left( \tilde{\zeta}\Lambda ^{s}S_{\delta }\tilde{\zeta}\dfrac{\partial }{%
\partial x_{j}^{k}}u,\tilde{\zeta}\left[ \Lambda ^{s}S_{\delta },\zeta %
\right] \lambda _{k}a_{ij}^{k}\zeta _{x_{i}^{k}}u\right)  \notag \\
&&+\left( \tilde{\zeta}\Lambda ^{s}S_{\delta }\tilde{\zeta}\dfrac{\partial }{%
\partial x_{j}^{k}}u,\tilde{\zeta}\left[ \lambda _{k}a_{ij}^{k}\zeta
_{x_{i}^{k}},\Lambda ^{s}S_{\delta }\right] \zeta u\right)  \notag \\
&&+\left( \tilde{\zeta}\left[ \Lambda ^{s}S_{\delta },\lambda
_{k}a_{ij}^{k}\zeta _{x_{i}^{k}}\right] \tilde{\zeta}\dfrac{\partial }{%
\partial x_{j}^{k}}u,\tilde{\zeta}\Lambda ^{s}S_{\delta }\zeta u\right) 
\notag \\
&=&VI_{1}^{kij}+VI_{2}^{kij}+\cdots +VI_{7}^{kij}.  \notag
\end{eqnarray}%
We now consider each term. We have%
\begin{eqnarray}
\left\vert VI_{1}^{kij}\right\vert &\leq &\left\vert \left( \tilde{\zeta}%
\Lambda ^{s}S_{\delta }\zeta u,\tilde{\zeta}\Lambda ^{s}S_{\delta }\lambda
_{k}\left( a_{ij}^{k}\zeta _{x_{i}^{k}}\right) _{x_{j}^{k}}u\right)
\right\vert  \notag \\
&&+\left\vert \left( \tilde{\zeta}\Lambda ^{s}S_{\delta }\zeta u,\tilde{\zeta%
}\Lambda ^{s}S_{\delta }\left( \lambda _{k}\right)
_{x_{j}^{k}}a_{ij}^{k}\zeta _{x_{i}^{k}}u\right) \right\vert  \notag \\
&\leq &C\left\{ \left\Vert \tilde{\zeta}S_{\delta }\zeta u\right\Vert
_{s}^{2}+\left\Vert \zeta ^{\prime }S_{\delta }\zeta u\right\Vert
_{s-1}^{2}+\left\Vert \lambda _{k}\zeta _{0}^{k}S_{\delta }\zeta
_{0}^{k}u\right\Vert _{s}^{2}\right.  \label{VI-1} \\
&&+\left. \left\Vert \left( \lambda _{k}\right) _{x_{j}^{k}}\zeta
_{0}^{k}S_{\delta }\zeta _{x_{i}^{k}}u\right\Vert _{s}^{2}+\left\Vert \tilde{%
\zeta}S_{\delta }\tilde{\zeta}u\right\Vert _{s-1}^{2}\right\} .  \notag
\end{eqnarray}%
We will use the following Wirtinger-type inequality (see e.g. Appendix in 
\cite{Saw}): If $\phi \in C^{2}\left( U\right) $ with $U$ open in $\mathbb{R}%
^{n}$, $\phi $ nonnegative, then for any compact subset $F\subset U$ there
exists a constant $C$ depending on $\left\Vert D^{2}{\phi }\right\Vert
_{L^{\infty }\left( V\right) }$, with $V$ open and $F\subset V\Subset U$,
and $\mathrm
{dist}\left( F,\partial V\right)>0 $ such that 
\begin{equation}
\left\vert D\phi \left( x\right) \right\vert ^{2}\leq C\phi \left( x\right) .
\label{ineq-Wirtinger}
\end{equation}%
We consider the penultimate term in (\ref{VI-1}),%
\begin{equation*}
\left\Vert \left( \lambda _{k}\right) _{x_{j}^{k}}\zeta _{0}^{k}S_{\delta
}\zeta _{x_{i}^{k}}u\right\Vert _{s}^{2}=\left( \Lambda ^{s}\left( \lambda
_{k}\right) _{x_{j}^{k}}\zeta _{0}^{k}S_{\delta }\zeta _{x_{i}^{k}}u,\Lambda
^{s}\left( \lambda _{k}\right) _{x_{j}^{k}}\zeta _{0}^{k}S_{\delta }\zeta
_{x_{i}^{k}}u\right) .
\end{equation*}%
We commute $\zeta _{x_{i}^{k}}$ from the right into the left and $\left(
\lambda _{k}\right) _{x_{j}^{k}}$ from the left into the right. We obtain%
\begin{eqnarray}
&&\left\Vert \left( \lambda _{k}\right) _{x_{j}^{k}}\zeta _{0}^{k}S_{\delta
}\zeta _{x_{i}^{k}}u\right\Vert _{s}^{2}  \notag \\
&=&\left( \Lambda ^{s}\zeta _{0}^{k}S_{\delta }\left( \zeta
_{x_{i}^{k}}\right) ^{2}u,\Lambda ^{s}\left( \left( \lambda _{k}\right)
_{x_{j}^{k}}\right) ^{2}\zeta _{0}^{k}S_{\delta }\zeta _{0}^{k}u\right) 
\notag \\
&&+\left( \Lambda ^{s}\zeta _{0}^{k}S_{\delta }\left( \zeta
_{x_{i}^{k}}\right) ^{2}u,\left[ \left( \lambda _{k}\right)
_{x_{j}^{k}},\Lambda ^{s}\right] \left( \lambda _{k}\right)
_{x_{j}^{k}}\zeta _{0}^{k}S_{\delta }\zeta _{0}^{k}u\right)  \notag \\
&&+\left( \left[ \Lambda ^{s},\left( \lambda _{k}\right) _{x_{j}^{k}}\right]
\zeta _{0}^{k}S_{\delta }\left( \zeta _{x_{i}^{k}}\right) ^{2}u,\Lambda
^{s}\left( \lambda _{k}\right) _{x_{j}^{k}}\zeta _{0}^{k}S_{\delta }\zeta
_{0}^{k}u\right)  \notag \\
&&+\left( \Lambda ^{s}\left( \lambda _{k}\right) _{x_{j}^{k}}\zeta _{0}^{k} 
\left[ \zeta _{x_{i}^{k}},S_{\delta }\right] \zeta _{x_{i}^{k}}u,\Lambda
^{s}\left( \lambda _{k}\right) _{x_{j}^{k}}\zeta _{0}^{k}S_{\delta }\zeta
_{0}^{k}u\right)  \notag \\
&&+\left( \left[ \zeta _{x_{i}^{k}},\Lambda ^{s}\right] \left( \lambda
_{k}\right) _{x_{j}^{k}}\zeta _{0}^{k}S_{\delta }\zeta _{x_{i}^{k}}u,\Lambda
^{s}\left( \lambda _{k}\right) _{x_{j}^{k}}\zeta _{0}^{k}S_{\delta }\zeta
_{0}^{k}u\right)  \notag \\
&&+\left( \Lambda ^{s}\left( \lambda _{k}\right) _{x_{j}^{k}}\zeta
_{0}^{k}S_{\delta }\zeta _{x_{i}^{k}}u,\left[ \Lambda ^{s},\zeta _{x_{i}^{k}}%
\right] \left( \lambda _{k}\right) _{x_{j}^{k}}\zeta _{0}^{k}S_{\delta
}\zeta _{0}^{k}u\right)  \notag \\
&&+\left( \Lambda ^{s}\left( \lambda _{k}\right) _{x_{j}^{k}}\zeta
_{0}^{k}S_{\delta }\zeta _{x_{i}^{k}}u,\Lambda ^{s}\left( \lambda
_{k}\right) _{x_{j}^{k}}\zeta _{0}^{k}\left[ S_{\delta },\zeta _{x_{i}^{k}}%
\right] \zeta _{0}^{k}u\right)  \notag \\
&\leq &\left\Vert \zeta _{0}^{k}S_{\delta }\left( \zeta _{x_{i}^{k}}\right)
^{2}u\right\Vert _{s}^{2}+\left\Vert \left( \left( \lambda _{k}\right)
_{x_{j}^{k}}\right) ^{2}\zeta _{0}^{k}S_{\delta }\zeta _{0}^{k}u\right\Vert
_{s}^{2}+C\left\Vert \tilde{\zeta}S_{\delta }\tilde{\zeta}u\right\Vert
_{s-1/2}^{2}.  \label{VI-2}
\end{eqnarray}%
The first term on the right is bounded by%
\begin{eqnarray*}
\left\Vert \zeta _{0}^{k}S_{\delta }\left( \zeta _{x_{i}^{k}}\right)
^{2}u\right\Vert _{s}^{2} &\leq &\left\Vert \left( \zeta _{x_{i}^{k}}\right)
^{2}\Lambda ^{s}\zeta _{0}^{k}S_{\delta }\tilde{\zeta}u\right\Vert
^{2}+\left\Vert \left[ \Lambda ^{s},\left( \zeta _{x_{i}^{k}}\right) ^{2}%
\right] \zeta _{0}^{k}S_{\delta }\tilde{\zeta}u\right\Vert ^{2} \\
&&+\left\Vert \Lambda ^{s}\zeta _{0}^{k}\left[ S_{\delta },\left( \zeta
_{x_{i}^{k}}\right) ^{2}\right] \tilde{\zeta}u\right\Vert ^{2}.
\end{eqnarray*}%
By the Wirtinger inequality (\ref{ineq-Wirtinger}), it follows that%
\begin{eqnarray}
\left\Vert \zeta _{0}^{k}S_{\delta }\left( \zeta _{x_{i}^{k}}\right)
^{2}u\right\Vert _{s}^{2} &\leq &C\left\{ \left\Vert \zeta \Lambda ^{s}\zeta
_{0}^{k}S_{\delta }\tilde{\zeta}u\right\Vert ^{2}+\left\Vert \tilde{\zeta}%
S_{\delta }\tilde{\zeta}u\right\Vert _{s-1}^{2}\right\}  \notag \\
&\leq &C\left\{ \left\Vert \tilde{\zeta}S_{\delta }\zeta u\right\Vert
_{s}^{2}+\left\Vert \tilde{\zeta}S_{\delta }\tilde{\zeta}u\right\Vert
_{s-1}^{2}\right\} .  \label{VI-3}
\end{eqnarray}%
Similarly, the second term on the right of (\ref{VI-2}) is bounded by%
\begin{equation*}
\left\Vert \left( \left( \lambda _{k}\right) _{x_{j}^{k}}\right) ^{2}\zeta
_{0}^{k}S_{\delta }\zeta _{0}^{k}u\right\Vert _{s}^{2}\leq C\left\{
\left\Vert \lambda _{k}\zeta _{0}^{k}S_{\delta }\zeta _{0}^{k}u\right\Vert
_{s}^{2}+\left\Vert \tilde{\zeta}S_{\delta }\tilde{\zeta}u\right\Vert
_{s-1}^{2}\right\} .
\end{equation*}%
We obtain%
\begin{equation*}
\left\Vert \left( \lambda _{k}\right) _{x_{j}^{k}}\zeta _{0}^{k}S_{\delta
}\zeta _{x_{i}^{k}}u\right\Vert _{s}^{2}\leq C\left\{ \left\Vert \tilde{\zeta%
}S_{\delta }\zeta u\right\Vert _{s}^{2}+\left\Vert \lambda _{k}\zeta
_{0}^{k}S_{\delta }\zeta _{0}^{k}u\right\Vert _{s}^{2}+\left\Vert \tilde{%
\zeta}S_{\delta }\tilde{\zeta}u\right\Vert _{s-1/2}^{2}\right\} .
\end{equation*}%
Plugging this estimate on the right of (\ref{VI-1}) yields%
\begin{eqnarray}
\left\vert VI_{1}^{kij}\right\vert &\leq &C\left\{ \left\Vert \tilde{\zeta}%
S_{\delta }\zeta u\right\Vert _{s}^{2}+\left\Vert \zeta ^{\prime }S_{\delta
}\zeta u\right\Vert _{s-1}^{2}+\left\Vert \lambda _{k}\zeta
_{0}^{k}S_{\delta }\zeta _{0}^{k}u\right\Vert _{s}^{2}\right\}  \label{VI-a}
\\
&&+C\left\Vert \tilde{\zeta}S_{\delta }\tilde{\zeta}u\right\Vert
_{s-1/2}^{2}.  \notag
\end{eqnarray}

It easily follows that%
\begin{eqnarray}
\left\vert VI_{2}^{kij}\right\vert &\leq &2\left\vert \left( \tilde{\zeta}%
\Lambda ^{s}S_{\delta }\zeta u,\tilde{\zeta}_{x_{j}^{k}}\Lambda
^{s}S_{\delta }\lambda _{k}a_{ij}^{k}\zeta _{x_{i}^{k}}u\right) \right\vert 
\notag \\
&\leq &\left\{ \left\Vert \tilde{\zeta}S_{\delta }\zeta u\right\Vert
_{s}^{2}+\left\Vert \lambda _{k}\zeta _{0}^{k}S_{\delta }\zeta
_{0}^{k}u\right\Vert _{s}^{2}+\left\Vert \zeta ^{\prime }S_{\delta }\tilde{%
\zeta}u\right\Vert _{s-1}^{2}\right\} .  \label{VI-b}
\end{eqnarray}

Commuting $\zeta _{x_{i}^{k}}$ into the left, we have that 
\begin{eqnarray*}
\left\vert VI_{3}^{kij}\right\vert &\leq &\left\vert \left( \tilde{\zeta}%
\Lambda ^{s}S_{\delta }\zeta _{x_{j}^{k}}\zeta _{x_{i}^{k}}u,\tilde{\zeta}%
\Lambda ^{s}S_{\delta }\lambda _{k}a_{ij}^{k}\left( \zeta _{0}^{k}\right)
^{2}u\right) \right\vert \\
&&+\left\vert \left( \tilde{\zeta}\left[ \zeta _{x_{i}^{k}},\Lambda
^{s}S_{\delta }\right] \zeta _{x_{j}^{k}}u,\tilde{\zeta}\Lambda
^{s}S_{\delta }\lambda _{k}a_{ij}^{k}\left( \zeta _{0}^{k}\right)
^{2}u\right) \right\vert \\
&&+\left\vert \left( \tilde{\zeta}\Lambda ^{s}S_{\delta }\zeta
_{x_{j}^{k}}\left( \zeta _{0}^{k}\right) ^{2}u,\tilde{\zeta}\left[ \Lambda
^{s}S_{\delta },\zeta _{x_{i}^{k}}\right] \lambda _{k}a_{ij}^{k}u\right)
\right\vert \\
&\leq &C\left\{ \left\Vert \tilde{\zeta}S_{\delta }\left( \zeta
_{x_{j}^{k}}\right) ^{2}u\right\Vert _{s}^{2}+\left\Vert \lambda _{k}\zeta
_{0}^{k}S_{\delta }\zeta _{0}^{k}u\right\Vert _{s}^{2}+\left\Vert \tilde{%
\zeta}S_{\delta }\tilde{\zeta}u\right\Vert _{s-1}^{2}\right\} .
\end{eqnarray*}%
Applying (\ref{VI-3}) to the first term on the right we obtain%
\begin{equation}
\left\vert VI_{3}^{kij}\right\vert \leq C\left\{ \left\Vert \tilde{\zeta}%
S_{\delta }\zeta u\right\Vert _{s}^{2}+\left\Vert \lambda _{k}\zeta
_{0}^{k}S_{\delta }\zeta _{0}^{k}u\right\Vert _{s}^{2}+\left\Vert \zeta
^{\prime }S_{\delta }\tilde{\zeta}u\right\Vert _{s-1}^{2}\right\} .
\label{VI-c}
\end{equation}

Using the identity (\ref{comm_lemm1}) for $\left[ \zeta ,\Lambda
^{s}S_{\delta }\right] $, we obtain 
\begin{eqnarray}
\left\vert VI_{4}^{kij}\right\vert &\leq &\sum_{r=1}^{m}\sum_{\ell
=1}^{n_{k}}\left\vert \left( \tilde{\zeta}R_{s,\delta }^{0}\Lambda
^{s-2}\zeta _{x_{\ell }^{r}}\frac{\partial }{\partial x_{\ell }^{r}}%
S_{\delta }\tilde{\zeta}\dfrac{\partial }{\partial x_{j}^{k}}u,\tilde{\zeta}%
\Lambda ^{s}S_{\delta }\lambda _{k}a_{ij}^{k}\zeta _{x_{i}^{k}}u\right)
\right\vert  \notag \\
&&+\left\vert \left( \tilde{\zeta}Q_{s,\delta }^{-0}\Lambda ^{s-2}\zeta
_{0}S_{\delta }\tilde{\zeta}\dfrac{\partial }{\partial x_{j}^{k}}u,\tilde{%
\zeta}\Lambda ^{s}S_{\delta }\lambda _{k}a_{ij}^{k}\zeta
_{x_{i}^{k}}u\right) \right\vert  \notag \\
&\leq &\sum_{r=1}^{m}\sum_{\ell =1}^{n_{k}}\left\vert \left( \tilde{\zeta}%
R_{s,\delta }^{0}\Lambda ^{s-2}\zeta _{x_{\ell }^{r}}\frac{\partial }{%
\partial x_{\ell }^{r}}S_{\delta }\tilde{\zeta}\dfrac{\partial }{\partial
x_{j}^{k}}u,\tilde{\zeta}\Lambda ^{s}S_{\delta }\lambda _{k}a_{ij}^{k}\zeta
_{x_{i}^{k}}u\right) \right\vert  \label{VI-4} \\
&&+C\left\{ \left\Vert \lambda _{k}\zeta _{0}^{k}S_{\delta }\zeta
_{0}^{k}u\right\Vert _{s}^{2}+\left\Vert \zeta ^{\prime }S_{\delta }\zeta
^{\prime }u\right\Vert _{s-1}^{2}\right\} .  \notag
\end{eqnarray}%
We treat the first term on the right of (\ref{VI-4}) in a similar way as we
obtain (\ref{VI-2}), we commute $\zeta _{x_{i}^{k}}$ into the left. We
proceed in the same way with $VI_{5}^{kij}$, to obtain%
\begin{equation}
\left\vert VI_{4}^{kij}\right\vert +\left\vert VI_{5}^{kij}\right\vert \leq
C\left\{ \left\Vert \tilde{\zeta}S_{\delta }\zeta u\right\Vert
_{s}^{2}+\left\Vert \lambda _{k}\zeta _{0}^{k}S_{\delta }\zeta
_{0}^{k}u\right\Vert _{s}^{2}+\left\Vert \zeta ^{\prime }S_{\delta }\zeta
^{\prime }u\right\Vert _{s-1/2}^{2}\right\} .  \label{VI-de}
\end{equation}

The treatment of $VI_{6}^{kij}$ and $VI_{7}^{kij}$ is similar. Applying (\ref%
{comm_lemm1}) to $\left[ \lambda _{k}a_{ij}^{k}\zeta _{x_{i}^{k}},\Lambda
^{s}S_{\delta }\right] $, we obtain%
\begin{eqnarray}
\left\vert VI_{6}^{kij}\right\vert &\leq &\sum_{r=1}^{m}\sum_{\ell
=1}^{n_{k}}\left\vert \left( \tilde{\zeta}\Lambda ^{s}S_{\delta }\tilde{\zeta%
}\dfrac{\partial }{\partial x_{j}^{k}}u,\tilde{\zeta}R_{s,\delta
}^{0}\Lambda ^{s-2}\left( \lambda _{k}a_{ij}^{k}\zeta _{x_{i}^{k}}\right)
_{x_{\ell }^{r}}\frac{\partial }{\partial x_{\ell }^{r}}S_{\delta }\zeta
u\right) \right\vert  \notag \\
&&+\left\vert \left( \tilde{\zeta}\Lambda ^{s}S_{\delta }\tilde{\zeta}\dfrac{%
\partial }{\partial x_{j}^{k}}u,\tilde{\zeta}Q_{s,\delta }^{-0}\Lambda
^{s-2}\zeta _{0}S_{\delta }\zeta u\right) \right\vert  \notag \\
&\leq &\sum_{r=1}^{m}\sum_{\ell =1}^{n_{k}}\left\vert \left( \tilde{\zeta}%
\Lambda ^{s}S_{\delta }\tilde{\zeta}\dfrac{\partial }{\partial x_{j}^{k}}u,%
\tilde{\zeta}R_{s,\delta }^{0}\Lambda ^{s-2}\left( \lambda
_{k}a_{ij}^{k}\zeta _{x_{i}^{k}}\right) _{x_{\ell }^{r}}\frac{\partial }{%
\partial x_{\ell }^{r}}S_{\delta }\zeta u\right) \right\vert  \label{VI-5} \\
&&+C\left\{ \left\Vert \tilde{\zeta}S_{\delta }\zeta u\right\Vert
_{s}^{2}+\left\Vert \zeta ^{\prime }S_{\delta }\zeta ^{\prime }u\right\Vert
_{s-1}^{2}\right\} .  \notag
\end{eqnarray}%
We write the terms in the sum above as%
\begin{eqnarray*}
&&\left( \tilde{\zeta}\Lambda ^{s}S_{\delta }\tilde{\zeta}\dfrac{\partial }{%
\partial x_{j}^{k}}u,\tilde{\zeta}R_{s,\delta }^{0}\Lambda ^{s-2}\left(
\lambda _{k}a_{ij}^{k}\zeta _{x_{i}^{k}}\right) _{x_{\ell }^{r}}\frac{%
\partial }{\partial x_{\ell }^{r}}S_{\delta }\zeta u\right) \\
&=&-\left( \frac{\partial }{\partial x_{\ell }^{r}}\left( \lambda
_{k}a_{ij}^{k}\zeta _{x_{i}^{k}}\right) _{x_{\ell }^{r}}\Lambda
^{-2}R_{s,\delta }^{0}\tilde{\zeta}\Lambda ^{s}S_{\delta }\tilde{\zeta}%
\dfrac{\partial }{\partial x_{j}^{k}}u,\Lambda ^{s}\tilde{\zeta}S_{\delta
}\zeta u\right) \\
&&+\left( \tilde{\zeta}\Lambda ^{s}S_{\delta }\tilde{\zeta}\dfrac{\partial }{%
\partial x_{j}^{k}}u,R_{s,\delta }^{0}\left[ \Lambda ^{s}\tilde{\zeta}%
,\Lambda ^{-2}\left( \lambda _{k}a_{ij}^{k}\zeta _{x_{i}^{k}}\right)
_{x_{\ell }^{r}}\frac{\partial }{\partial x_{\ell }^{r}}\right] S_{\delta
}\zeta u\right) \\
&&+\left( \tilde{\zeta}\Lambda ^{s}S_{\delta }\tilde{\zeta}\dfrac{\partial }{%
\partial x_{j}^{k}}u,\left[ \tilde{\zeta},R_{s,\delta }^{0}\Lambda ^{s-2}%
\right] \Lambda ^{-2}\left( \lambda _{k}a_{ij}^{k}\zeta _{x_{i}^{k}}\right)
_{x_{\ell }^{r}}\frac{\partial }{\partial x_{\ell }^{r}}S_{\delta }\zeta
u\right) .
\end{eqnarray*}%
Plugging this into (\ref{VI-5}), we obtain%
\begin{eqnarray*}
\left\vert VI_{6}^{kij}\right\vert &\leq &\sum_{r=1}^{m}\sum_{\ell
=1}^{n_{k}}\left\vert \left( \frac{\partial }{\partial x_{\ell }^{r}}\left(
\lambda _{k}a_{ij}^{k}\zeta _{x_{i}^{k}}\right) _{x_{\ell }^{r}}\Lambda
^{-2}R_{s,\delta }^{0}\tilde{\zeta}\Lambda ^{s}S_{\delta }\tilde{\zeta}%
\dfrac{\partial }{\partial x_{j}^{k}}u,\Lambda ^{s}\tilde{\zeta}S_{\delta
}\zeta u\right) \right\vert \\
&&+C\left\{ \left\Vert \tilde{\zeta}S_{\delta }\zeta u\right\Vert
_{s}^{2}+\left\Vert \zeta ^{\prime }S_{\delta }\zeta ^{\prime }u\right\Vert
_{s-1/2}^{2}\right\} .
\end{eqnarray*}%
We estimate the first term on the right in the same way as we did (\ref{VI-1}%
-\ref{VI-2}). Since $VI_{7}^{kij}$ can be estimated by the same procedure,
we obtain%
\begin{equation}
\left\vert VI_{6}^{kij}\right\vert +\left\vert VI_{7}^{kij}\right\vert \leq
C\left\{ \left\Vert \tilde{\zeta}S_{\delta }\zeta u\right\Vert
_{s}^{2}+\left\Vert \lambda _{k}\zeta _{0}^{k}S_{\delta }\zeta
_{0}^{k}u\right\Vert _{s}^{2}+\left\Vert \zeta ^{\prime }S_{\delta }\zeta
^{\prime }u\right\Vert _{s-1/2}^{2}\right\} .  \label{VI-fg}
\end{equation}%
Combining estimates (\ref{VI-a}), (\ref{VI-b}), (\ref{VI-c}), (\ref{VI-de}),
and (\ref{VI-fg}) yields%
\begin{eqnarray}
\left\vert VI\right\vert &\leq
&2\sum\limits_{k=1}^{m}\sum\limits_{i,j=1}^{n_{k}}\left\vert
VI^{kij}\right\vert  \notag \\
&\leq &C\left\{ \left\Vert \tilde{\zeta}S_{\delta }\zeta u\right\Vert
_{s}^{2}+\sum\limits_{k=1}^{m}\left\Vert \lambda _{k}\zeta _{0}^{k}S_{\delta
}\zeta _{0}^{k}u\right\Vert _{s}^{2}+\left\Vert \zeta ^{\prime }S_{\delta
}\zeta ^{\prime }u\right\Vert _{s-1/2}^{2}\right\} .  \label{comm_lemm-VI}
\end{eqnarray}

Applying (\ref{comm_lemm-I}), (\ref{comm_lemm-II-III}), (\ref{comm_lemm-IV-V}%
), and (\ref{comm_lemm-VI}) to the right of (\ref{comm_lemm-00}) we obtain
the conclusion of the lemma.
\end{proof}

\begin{corollary}
\label{cor_hard} Under the assumptions of Lemma \ref{hard} the following
estimate holds%
\begin{equation*}
\begin{array}{lll}
\sum\limits_{k=1}^{m}\left\Vert \sqrt{\lambda _{k}}\Lambda
_{x^{k}}^{\varepsilon }\tilde{\zeta}\Lambda ^{s}S_{\delta }\zeta \mathbf{u}%
\right\Vert ^{2} & \leq & C\left\{ \left\Vert \tilde{\zeta}S_{\delta }\zeta 
\mathbf{u}\right\Vert _{s}^{2}+\left\vert \left( \tilde{\zeta}\Lambda
^{s}S_{\delta }\tilde{\zeta}\mathbf{Lu},\tilde{\zeta}\Lambda ^{s}S_{\delta
}\zeta \mathbf{u}\right) \right\vert \right. \\ 
&  & \left. \quad +\sum\limits_{k=1}^{m}\left\Vert \lambda _{k}\zeta
_{0}^{k}S_{\delta }\zeta _{0}^{k}\mathbf{u}\right\Vert _{s}^{2}+\left\Vert
\zeta ^{\prime }S_{\delta }\zeta ^{\prime }\mathbf{u}\right\Vert
_{s-1/2}^{2}\right\} .%
\end{array}%
\end{equation*}
for all $\mathbf{u}\in \prod_{k=1}^{N}H^{s-1}(\mathbb{R}^{n})$.
\end{corollary}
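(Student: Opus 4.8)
The plan is to apply Lemma~\ref{inter_lem} to the smoothed vector $v:=\Lambda^{s}S_{\delta}\zeta\mathbf{u}$ and then to dispose of the resulting right-hand side by means of the commutator estimate of Lemma~\ref{hard}. First I would check that $v$ is regular enough: since $\mathbf{u}\in\prod_{k=1}^{N}H^{s-1}(\mathbb{R}^{n})$, multiplication by $\zeta$ keeps us in $\prod H^{s-1}$, property~(\ref{2_S}) of Lemma~\ref{properties_S} gives $S_{\delta}\zeta\mathbf{u}\in\prod H^{s+2}$, and hence $v\in\prod H^{2}$. Therefore Lemma~\ref{inter_lem} applies to $v$ with the cutoff pair $\tilde{\zeta}\prec\zeta^{\prime}$ (both supported, by the construction of the cutoffs, inside the neighbourhoods where the $\mathbf{L}_{k}$ are subelliptic), yielding
\[
\sum_{k=1}^{m}\bigl\|\sqrt{\lambda_{k}}\,\Lambda_{x^{k}}^{\varepsilon}\tilde{\zeta}v\bigr\|^{2}\leq C\bigl\{\bigl|(\tilde{\zeta}\mathbf{L}v,\tilde{\zeta}v)\bigr|+\|\zeta^{\prime}v\|^{2}\bigr\},
\]
whose left side is exactly $\sum_{k}\|\sqrt{\lambda_{k}}\,\Lambda_{x^{k}}^{\varepsilon}\tilde{\zeta}\Lambda^{s}S_{\delta}\zeta\mathbf{u}\|^{2}$.

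It remains to bound the two terms on the right. For $\|\zeta^{\prime}v\|^{2}=\|\zeta^{\prime}\Lambda^{s}S_{\delta}\zeta\mathbf{u}\|^{2}$ I would commute $\zeta^{\prime}$ past $\Lambda^{s}$ (a commutator of order $s-1$ by Lemma~\ref{composition}) and commute cutoffs through $S_{\delta}$ using property~(\ref{4_S}); together with the support relations $\zeta\prec\tilde{\zeta}\prec\zeta^{\prime}$ this produces $C\|\tilde{\zeta}S_{\delta}\zeta\mathbf{u}\|_{s}^{2}$ plus terms controlled by $\|\zeta^{\prime}S_{\delta}\zeta^{\prime}\mathbf{u}\|_{s-1/2}^{2}$. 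For the main term I would use
\[
\mathbf{L}v=\Lambda^{s}S_{\delta}\zeta\,\mathbf{Lu}+[\mathbf{L},\Lambda^{s}S_{\delta}\zeta]\mathbf{u},
\]
so that $(\tilde{\zeta}\mathbf{L}v,\tilde{\zeta}v)$ splits as $(\tilde{\zeta}\Lambda^{s}S_{\delta}\zeta\mathbf{Lu},\tilde{\zeta}\Lambda^{s}S_{\delta}\zeta\mathbf{u})+(\tilde{\zeta}[\mathbf{L},\Lambda^{s}S_{\delta}\zeta]\mathbf{u},\tilde{\zeta}\Lambda^{s}S_{\delta}\zeta\mathbf{u})$. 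The second inner product is estimated directly by Lemma~\ref{hard} (applicable because $\prod H^{s-1}\subset\prod H^{s-2}$), and it produces exactly the terms $\|\tilde{\zeta}S_{\delta}\zeta\mathbf{u}\|_{s}^{2}$, $\sum_{k}\|\lambda_{k}\zeta_{0}^{k}S_{\delta}\zeta_{0}^{k}\mathbf{u}\|_{s}^{2}$ and $\|\zeta^{\prime}S_{\delta}\zeta^{\prime}\mathbf{u}\|_{s-1/2}^{2}$ appearing on the right of the corollary. In the first inner product I would insert $\tilde{\zeta}$ next to $\mathbf{Lu}$ by writing $\zeta=\zeta\tilde{\zeta}$ and then commute one copy of $\zeta$ out of $\Lambda^{s}S_{\delta}\zeta\tilde{\zeta}$ via Lemma~\ref{composition} and property~(\ref{4_S}); the principal part yields the term $|(\tilde{\zeta}\Lambda^{s}S_{\delta}\tilde{\zeta}\mathbf{Lu},\tilde{\zeta}\Lambda^{s}S_{\delta}\zeta\mathbf{u})|$ of the statement, while the commutator remainders, being of lower order, are absorbed after Cauchy--Schwarz and Young's inequality into $\|\tilde{\zeta}S_{\delta}\zeta\mathbf{u}\|_{s}^{2}+\|\zeta^{\prime}S_{\delta}\zeta^{\prime}\mathbf{u}\|_{s-1/2}^{2}$ and into that same $\mathbf{Lu}$ inner product. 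Collecting the bounds gives the asserted estimate.

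The only nontrivial ingredient is Lemma~\ref{hard}; everything else is routine pseudodifferential calculus together with the mapping and commutator properties of $S_{\delta}$ collected in Lemma~\ref{properties_S}. Accordingly, I expect the one real difficulty to be the bookkeeping of the nested cutoffs $\zeta\prec\tilde{\zeta}\prec\zeta^{\prime}$ and of the $\zeta_{0}^{k}$: one must make sure that each error term generated by the commutations lands in one of the three admissible buckets on the right-hand side rather than in a Sobolev norm of order higher than allowed.
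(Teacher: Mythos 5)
Your proposal is essentially the paper's own proof: the paper likewise applies Lemma \ref{inter_lem} with $\tilde{\zeta}$ in place of $\zeta$ and with $\Lambda ^{s}S_{\delta }\zeta \mathbf{u}$ in place of $\mathbf{u}$ (legitimate since $\mathbf{u}\in \prod_{k}H^{s-1}$ gives $\Lambda ^{s}S_{\delta }\zeta \mathbf{u}\in \prod_{k}H^{2}$), and then concludes by commuting $\mathbf{L}$ with $\Lambda ^{s}S_{\delta }\zeta $ and applying Lemma \ref{hard}, exactly as you do. The one loose phrase is that commutator remainders carrying $\mathbf{Lu}$ are ``absorbed into that same $\mathbf{Lu}$ inner product'' --- one cannot absorb an error into the absolute value of an inner product, and such terms are not among the right-hand buckets --- but this concerns only the cosmetic replacement of $\zeta \mathbf{Lu}$ by $\tilde{\zeta}\mathbf{Lu}$ in the inner-product term (harmless in all subsequent uses, where that term is anyway estimated by $\left\Vert \zeta ^{\prime }\mathbf{Lu}\right\Vert _{s}$ via Cauchy--Schwarz), a point the paper's own two-line proof also leaves implicit.
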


\begin{proof}
Since $\mathbf{u}\in \prod_{k=1}^{N}H^{s-1}(\mathbb{R}^{n})$ we have that $%
\Lambda ^{s}S_{\delta }\zeta \mathbf{u}\in \prod_{k=1}^{N}H^{2}(\mathbb{R}%
^{n})$ so we can replace $\zeta $ by $\tilde{\zeta}$ and $\mathbf{u}$ by $%
\Lambda ^{s}S_{\delta }\zeta \mathbf{u}$ in Lemma \ref{inter_lem} to obtain%
\begin{equation*}
\sum\limits_{k=1}^{m}\left\Vert \sqrt{\lambda _{k}}\Lambda
_{x^{k}}^{\varepsilon }\left( \tilde{\zeta}\Lambda ^{s}S_{\delta }\zeta 
\mathbf{u}\right) \right\Vert ^{2}\leq C\left\{ \left\vert \left( \tilde{%
\zeta}\mathbf{L}\Lambda ^{s}S_{\delta }\zeta \mathbf{u},\tilde{\zeta}\Lambda
^{s}S_{\delta }\zeta \mathbf{u}\right) \right\vert +\left\Vert \zeta
^{\prime }\Lambda ^{s}S_{\delta }\zeta \mathbf{u}\right\Vert ^{2}\right\} .
\end{equation*}%
The corollary follows by commuting $\mathbf{L}$ with $\Lambda ^{s}S_{\delta
}\zeta $ and applying Lemma \ref{hard}.
\end{proof}

\begin{lemma}
\label{rem_term2}There exists $C>0$ such that%
\begin{equation*}
\sum\limits_{k=1}^{m}\left\Vert \lambda _{k}\zeta _{0}^{k}S_{\delta }\zeta
_{0}^{k}\mathbf{u}\right\Vert _{s}^{2}\leq C\left\{ \left\vert \left( \zeta
^{\prime }\Lambda ^{s-\varepsilon }S_{\delta }\zeta ^{\prime }\mathbf{Lu}%
,\zeta ^{\prime }\Lambda ^{s-\varepsilon }S_{\delta }\tilde{\zeta}\mathbf{u}%
\right) \right\vert +\left\Vert \zeta ^{\prime \prime }S_{\delta }\zeta
^{\prime \prime }\mathbf{u}\right\Vert _{s-\varepsilon }^{2}\right\}
\end{equation*}
\end{lemma}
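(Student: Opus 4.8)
The plan is to reduce, as elsewhere in the paper, to the scalar case $N=1$, and to estimate each summand $\left\Vert \lambda _{k}\zeta _{0}^{k}S_{\delta }\zeta _{0}^{k}u\right\Vert _{s}^{2}$ separately. The geometric fact that drives the argument is that on $\mathrm{support}\,\zeta _{0}^{k}$ the block $x^{k}$ is bounded away from the origin (because $\mathrm{support}\,\sigma _{0}^{k}\cap U_{0}^{k}=\emptyset $), so for every $l\neq k$ the variable $\overline{x^{l}}$ on which $\lambda _{l}$ depends stays away from $0$ there, whence $\lambda _{l}\geq c_{l}>0$ on $\mathrm{support}\,\zeta _{0}^{k}$ by compactness; when $k=1$ this covers every index (including $l=1$, since $\lambda _{1}\equiv 1$), and the only exception in general is $l=k$ with $k\geq 2$, where $\lambda _{k}$ may vanish.

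First I would record the elementary splitting of the Sobolev norm. We may take $0<\varepsilon \leq 1$, and then subadditivity of $t\mapsto t^{\varepsilon }$ together with $1+\left\vert \xi \right\vert ^{2}\leq \sum_{j=1}^{m}(1+\left\vert \xi ^{j}\right\vert ^{2})$ gives $(1+\left\vert \xi \right\vert ^{2})^{\varepsilon }\leq \sum_{j}(1+\left\vert \xi ^{j}\right\vert ^{2})^{\varepsilon }$, so that for any $v$
\[
\left\Vert v\right\Vert _{s}^{2}\leq \sum_{j=1}^{m}\left\Vert \Lambda _{x^{j}}^{\varepsilon }v\right\Vert _{s-\varepsilon }^{2}.
\]
Apply this to $v=\lambda _{k}\zeta _{0}^{k}S_{\delta }\zeta _{0}^{k}u$. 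For each $j$ I then move a weight outside, using the crucial fact that multiplication by a function of $\overline{x^{j}}$ commutes \emph{exactly} with $\Lambda _{x^{j}}^{\varepsilon }$ (both are diagonal in the partial Fourier variables $\xi ^{j}$, on which $\lambda _{j}$ does not depend). When $j\neq k$, write $\lambda _{k}\zeta _{0}^{k}=g_{kj}\sqrt{\lambda _{j}}$ with $g_{kj}:=\lambda _{k}\zeta _{0}^{k}/\sqrt{\lambda _{j}}\in C_{0}^{\infty }(\mathbb{R}^{n})$ supported in $\mathrm{support}\,\zeta _{0}^{k}$ (legitimate since $\lambda _{j}\geq c_{j}$ there), so the $j$-th term becomes $\left\Vert \sqrt{\lambda _{j}}\,\Lambda _{x^{j}}^{\varepsilon }g_{kj}S_{\delta }\zeta _{0}^{k}u\right\Vert _{s-\varepsilon }^{2}$. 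When $j=k$ (only an issue for $k\geq 2$) write instead $\lambda _{k}=\sqrt{\lambda _{k}}\cdot \sqrt{\lambda _{k}}$, keep one $\sqrt{\lambda _{k}}$ as the subelliptic weight and absorb the bounded factor $\sqrt{\lambda _{k}}\,\zeta _{0}^{k}$ into the constant, the mild lack of smoothness of $\sqrt{\lambda _{k}}$ being handled by the Wirtinger inequality (\ref{ineq-Wirtinger}).

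Next, using the cutoff tower $\zeta _{0}^{k}\prec \tilde{\zeta}\prec \zeta ^{\prime }\prec \zeta ^{\prime \prime }$, the identity $\zeta _{0}^{k}u=\zeta _{0}^{k}\tilde{\zeta}u$, and property (\ref{4_S}) of Lemma \ref{properties_S} (every commutator with $S_{\delta }$ retains $S_{\delta }$ on the right and lowers the order by at least one), I would replace the interior localization $g_{kj}S_{\delta }\zeta _{0}^{k}u$ by $\zeta ^{\prime }\Lambda ^{s-\varepsilon }S_{\delta }\tilde{\zeta}u$ up to errors dominated by $C\left\Vert \zeta ^{\prime \prime }S_{\delta }\zeta ^{\prime \prime }u\right\Vert _{s-\varepsilon }^{2}$. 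Summing over $j$ and $k$, matters reduce to bounding $\sum_{j}\left\Vert \sqrt{\lambda _{j}}\,\Lambda _{x^{j}}^{\varepsilon }\big( \zeta ^{\prime }\Lambda ^{s-\varepsilon }S_{\delta }\tilde{\zeta}u\big) \right\Vert ^{2}$. To this I apply Lemma \ref{inter_lem} with $\zeta ,\tilde{\zeta}$ there replaced by $\zeta ^{\prime },\zeta ^{\prime \prime }$ and with $u$ replaced by $\Lambda ^{s-\varepsilon }S_{\delta }\tilde{\zeta}u$ (which lies in $H^{2}$ for $u$ in the ambient Sobolev class, by Lemma \ref{properties_S}), obtaining
\[
\sum_{j=1}^{m}\left\Vert \sqrt{\lambda _{j}}\,\Lambda _{x^{j}}^{\varepsilon }\big( \zeta ^{\prime }\Lambda ^{s-\varepsilon }S_{\delta }\tilde{\zeta}u\big) \right\Vert ^{2}\leq C\left\{ \left\vert \left( \zeta ^{\prime }L\Lambda ^{s-\varepsilon }S_{\delta }\tilde{\zeta}u,\zeta ^{\prime }\Lambda ^{s-\varepsilon }S_{\delta }\tilde{\zeta}u\right) \right\vert +\left\Vert \zeta ^{\prime \prime }\Lambda ^{s-\varepsilon }S_{\delta }\tilde{\zeta}u\right\Vert ^{2}\right\} ,
\]
the last norm being again $\leq C\left\Vert \zeta ^{\prime \prime }S_{\delta }\zeta ^{\prime \prime }u\right\Vert _{s-\varepsilon }^{2}$ modulo lower-order commutators. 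Finally I commute $L$ past $\Lambda ^{s-\varepsilon }S_{\delta }\tilde{\zeta}$: the principal term is the asserted $\left\vert \left( \zeta ^{\prime }\Lambda ^{s-\varepsilon }S_{\delta }\zeta ^{\prime }Lu,\zeta ^{\prime }\Lambda ^{s-\varepsilon }S_{\delta }\tilde{\zeta}u\right) \right\vert $ after one further cutoff swap, while the commutator $\left( \zeta ^{\prime }[L,\Lambda ^{s-\varepsilon }S_{\delta }\tilde{\zeta}]u,\zeta ^{\prime }\Lambda ^{s-\varepsilon }S_{\delta }\tilde{\zeta}u\right) $ is estimated by Corollary \ref{cor_hard} (equivalently Lemma \ref{hard}) applied at level $s-\varepsilon $; this produces only terms of order $\leq s-\varepsilon $ — among them $\sum_{k}\left\Vert \lambda _{k}\zeta _{0}^{k}S_{\delta }\zeta _{0}^{k}u\right\Vert _{s-\varepsilon }^{2}$, absorbed by a self-improving application of the present lemma at level $s-\varepsilon $ — all of which fit into $C\left\Vert \zeta ^{\prime \prime }S_{\delta }\zeta ^{\prime \prime }u\right\Vert _{s-\varepsilon }^{2}$.

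The substance of the proof is the bookkeeping rather than any single identity, and two points are the crux. The diagonal term $j=k$ (for $k\geq 2$) cannot borrow a uniformly positive neighbouring coefficient and must pass through the subellipticity of $L_{k}$ itself; the decisive algebraic observation is that the weight appearing in $\left\Vert \lambda _{k}\zeta _{0}^{k}S_{\delta }\zeta _{0}^{k}u\right\Vert $ is the \emph{full} $\lambda _{k}=\sqrt{\lambda _{k}}\cdot \sqrt{\lambda _{k}}$, so one factor supplies precisely the $\varepsilon $-gain the subelliptic estimate for $L_{k}$ provides while the other is a harmless bounded multiplier. Secondly, one must check, uniformly in $\delta \in (0,1]$, that each commutator and each replacement of a $\zeta _{0}^{k}$-localization by a $\zeta ^{\prime }$- or $\zeta ^{\prime \prime }$-localization contributes only terms of order $\leq s-\varepsilon $ still carrying the smoothing factor $S_{\delta }$, so that the right-hand side genuinely collapses to the two terms in the statement; this $\delta$-uniform control of the remainders is the part that requires care.
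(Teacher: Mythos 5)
Your overall route coincides with the paper's: split the $\varepsilon$-gain as $\Vert v\Vert_{s}^{2}\lesssim\sum_{j}\Vert\Lambda_{x^{j}}^{\varepsilon}v\Vert_{s-\varepsilon}^{2}$, exploit that $\lambda_{\ell}\geq c_{\ell}>0$ on $\mathrm{support}\,\zeta_{0}^{k}$ for $\ell\neq k$ (and $\lambda_{k}\zeta_{0}^{k}\leq C\sqrt{\lambda_{k}}$ on the diagonal), reduce to $\sum_{j}\Vert\sqrt{\lambda_{j}}\Lambda_{x^{j}}^{\varepsilon}(\zeta'\Lambda^{s-\varepsilon}S_{\delta}\tilde{\zeta}\mathbf{u})\Vert^{2}$, and then invoke Corollary \ref{cor_hard} at level $s-\varepsilon$ with shifted cutoffs. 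However, one step as written would fail: the ``self-improving application of the present lemma at level $s-\varepsilon$'' used to absorb $\sum_{k}\Vert\lambda_{k}\zeta_{0}^{k}S_{\delta}\zeta_{0}^{k}u\Vert_{s-\varepsilon}^{2}$. Invoking the lemma being proved at level $s-\varepsilon$ while proving it at level $s$ is circular (there is no induction parameter or base case, $s$ being an arbitrary real), and even granting it, it would return an inner-product term $\vert(\zeta'\Lambda^{s-2\varepsilon}S_{\delta}\zeta' L\mathbf{u},\zeta'\Lambda^{s-2\varepsilon}S_{\delta}\tilde{\zeta}\mathbf{u})\vert$, which does not ``fit into'' $C\Vert\zeta''S_{\delta}\zeta''\mathbf{u}\Vert_{s-\varepsilon}^{2}$ nor into the two terms of the statement. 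No recursion is needed: once the Sobolev index has dropped to $s-\varepsilon$, the factor $\lambda_{k}\tilde{\zeta}_{0}^{k}$ is merely a bounded smooth compactly supported multiplier, so (after commuting $S_{\delta}$ with cutoffs via Lemma \ref{properties_S}(iv)) that residual term is dominated directly by $C\Vert\zeta''S_{\delta}\zeta''\mathbf{u}\Vert_{s-\varepsilon}^{2}$; this is exactly how the paper disposes of it.

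A second, repairable but glossed, point concerns the order of operations in your factorization $\lambda_{k}\zeta_{0}^{k}=g_{kj}\sqrt{\lambda_{j}}$. You split the norm first and keep the outer $\Vert\cdot\Vert_{s-\varepsilon}$, so to arrive at $\Vert\sqrt{\lambda_{j}}\Lambda_{x^{j}}^{\varepsilon}(\zeta'\Lambda^{s-\varepsilon}S_{\delta}\tilde{\zeta}u)\Vert$ you must commute the full $\Lambda^{s-\varepsilon}$ past $\sqrt{\lambda_{j}}$, which is in general not smooth (only $\lambda_{j}$ is), so the standard symbol calculus does not apply without a smooth modification of $\sqrt{\lambda_{j}}$ away from the relevant support. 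The paper's ordering sidesteps this entirely: it first commutes $\Lambda^{s-\varepsilon}$, $S_{\delta}$ and the partial operators past the \emph{smooth} factor $\lambda_{k}\zeta_{0}^{k}$ and the cutoffs, and only at the very end uses the pointwise domination $\lambda_{k}\zeta_{0}^{k}\leq C\lambda_{\ell}\zeta_{0}^{k}\leq C'\sqrt{\lambda_{\ell}}$ applied to the same integrand, so that no operator is ever commuted across a square root. Incidentally, your appeal to the Wirtinger inequality for the diagonal term $j=k$ is unnecessary: $\lambda_{k}$ does not depend on $x^{k}$, so it commutes exactly with $\Lambda_{x^{k}}^{\varepsilon}$, and $\sqrt{\lambda_{k}}\zeta_{0}^{k}$ is simply bounded.
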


\begin{proof}
We have%
\begin{eqnarray}
\sum\limits_{k=1}^{m}\left\Vert \lambda _{k}\zeta _{0}^{k}S_{\delta }\zeta
_{0}^{k}\mathbf{u}\right\Vert _{s}^{2}
&=&\sum\limits_{k=1}^{m}\sum\limits_{p=1}^{N}\left\Vert \Lambda ^{s}\lambda
_{k}\zeta _{0}^{k}S_{\delta }\zeta _{0}^{k}u_{p}\right\Vert ^{2}  \notag \\
&\leq &\sum\limits_{k=1}^{m}\sum\limits_{p=1}^{N}\left\Vert \Lambda
^{\varepsilon }\lambda _{k}\zeta _{0}^{k}\Lambda ^{s-\varepsilon }S_{\delta }%
\tilde{\zeta}u_{p}\right\Vert ^{2}+C\left\Vert \zeta ^{\prime }S_{\delta }%
\tilde{\zeta}\mathbf{u}\right\Vert _{s-1}^{2}  \notag \\
&\leq &C\sum\limits_{k,\ell =1}^{m}\sum\limits_{p=1}^{N}\left\Vert \Lambda
_{x^{\ell }}^{\varepsilon }\lambda _{k}\zeta _{0}^{k}\Lambda ^{s-\varepsilon
}S_{\delta }\tilde{\zeta}u_{p}\right\Vert ^{2}+C\left\Vert \zeta ^{\prime
}S_{\delta }\tilde{\zeta}\mathbf{u}\right\Vert _{s-1}^{2}  \notag \\
&\leq &C\sum\limits_{k,\ell =1}^{m}\sum\limits_{p=1}^{N}\left\Vert \lambda
_{k}\zeta _{0}^{k}\Lambda _{x^{\ell }}^{\varepsilon }\tilde{\zeta}\Lambda
^{s-\varepsilon }S_{\delta }\tilde{\zeta}u_{p}\right\Vert ^{2}+C\left\Vert
\zeta ^{\prime }S_{\delta }\tilde{\zeta}\mathbf{u}\right\Vert _{s-1}^{2}.
\label{term2-1}
\end{eqnarray}

Now we recall that by the hypotheses on $\lambda _{k}$, we have that if $%
1\leq \ell \leq m$, and $\ell \neq k$ then $\lambda _{\ell }\left( x\right)
>0$ for all $x$ in an open neighbourhood of the support of $\zeta _{0}^{k}$.
Hence%
\begin{equation*}
\delta _{k}=\min_{\substack{ 1\leq \ell \leq m  \\ \ell \neq k}}\inf_{x\in 
\mathrm
{support}\left( \zeta _{0}^{k}\right) }\lambda _{\ell }\left(
x\right) >0,\qquad 1\leq \ell \leq m.
\end{equation*}%
Consequently, 
\begin{equation}
\lambda _{k}\zeta _{0}^{k}\leq \frac{\left\Vert \lambda _{k}\right\Vert
_{\infty }}{\delta _{k}}\lambda _{\ell }\left( x\right) \zeta
_{0}^{k},\qquad 1\leq \ell ,k\leq m.  \label{ineq-lambdas}
\end{equation}

We apply (\ref{ineq-lambdas}) to the right side of (\ref{term2-1}) to obtain%
\begin{eqnarray*}
\sum\limits_{k=1}^{m}\left\Vert \lambda _{k}\zeta _{0}^{k}S_{\delta }\zeta
_{0}^{k}\mathbf{u}\right\Vert _{s}^{2} &\leq
&C\sum\limits_{k=1}^{m}\sum\limits_{p=1}^{N}\left\Vert \lambda _{k}\zeta
_{0}^{k}\Lambda _{x^{k}}^{\varepsilon }\tilde{\zeta}\Lambda ^{s-\varepsilon
}S_{\delta }\tilde{\zeta}u_{p}\right\Vert ^{2}+C\left\Vert \zeta ^{\prime
}S_{\delta }\tilde{\zeta}\mathbf{u}\right\Vert _{s-1}^{2} \\
&\leq &C\sum\limits_{k=1}^{m}\left\Vert \sqrt{\lambda _{k}}\Lambda
_{x^{k}}^{\varepsilon }\tilde{\zeta}\Lambda ^{s-\varepsilon }S_{\delta }%
\tilde{\zeta}\mathbf{u}\right\Vert ^{2}+C\left\Vert \zeta ^{\prime
}S_{\delta }\tilde{\zeta}\mathbf{u}\right\Vert _{s-1}^{2}.
\end{eqnarray*}%
We apply Corollary \ref{cor_hard} with $\tilde{\zeta}$ instead of $\zeta $, $%
\zeta ^{\prime }$ instead of $\tilde{\zeta}$, etc., to the first term on the
right to obtain%
\begin{eqnarray*}
\sum\limits_{k=1}^{m}\left\Vert \lambda _{k}\zeta _{0}^{k}S_{\delta }\zeta
_{0}^{k}\mathbf{u}\right\Vert _{s}^{2} &\leq
&C\sum\limits_{k=1}^{m}\left\Vert \sqrt{\lambda _{k}}\Lambda
_{x^{k}}^{\varepsilon }\tilde{\zeta}\Lambda ^{s-\varepsilon }S_{\delta }%
\tilde{\zeta}\mathbf{u}\right\Vert ^{2}+C\left\Vert \zeta ^{\prime
}S_{\delta }\tilde{\zeta}\mathbf{u}\right\Vert _{s-1}^{2} \\
&\leq &C\left\{ \left\Vert \zeta ^{\prime }S_{\delta }\tilde{\zeta}\mathbf{u}%
\right\Vert _{s-\varepsilon }^{2}+\left\vert \left( \zeta ^{\prime }\Lambda
^{s-\varepsilon }S_{\delta }\zeta ^{\prime }\mathbf{Lu},\zeta ^{\prime
}\Lambda ^{s-\varepsilon }S_{\delta }\tilde{\zeta}\mathbf{u}\right)
\right\vert \quad \right. \\
&&\left. +\sum\limits_{k=1}^{m}\left\Vert \lambda _{k}\tilde{\zeta}%
_{0}^{k}S_{\delta }\tilde{\zeta}_{0}^{k}\mathbf{u}\right\Vert
_{s-\varepsilon }^{2}+\left\Vert \zeta ^{\prime \prime }S_{\delta }\zeta
^{\prime \prime }\mathbf{u}\right\Vert _{s-\varepsilon -1/2}^{2}+C\left\Vert
\zeta ^{\prime }S_{\delta }\tilde{\zeta}\mathbf{u}\right\Vert
_{s-1}^{2}\right\} . \\
&\leq &C\left\{ \left\vert \left( \zeta ^{\prime }\Lambda ^{s-\varepsilon
}S_{\delta }\zeta ^{\prime }\mathbf{Lu},\zeta ^{\prime }\Lambda
^{s-\varepsilon }S_{\delta }\tilde{\zeta}\mathbf{u}\right) \right\vert
+\left\Vert \zeta ^{\prime \prime }S_{\delta }\zeta ^{\prime \prime }\mathbf{%
u}\right\Vert _{s-\varepsilon }^{2}\right\} .
\end{eqnarray*}
\end{proof}

The last auxiliary result we need is the following Poincar\'{e}-type
inequality.

\begin{lemma}[Poincar\'{e}-type inequality]
\label{lemma-Poinc}For every $0<\varepsilon <1$ and cutoff $\sigma _{1}$
such that $d_{0}=\mathrm
{diam}\left( \mathrm
{support}\,\sigma _{1}\right)
\leq \sqrt{\varepsilon /2}$, there exists $C>0$ such that for any $\mathbf{u}%
\in \prod_{k=1}^{N}H^{s+1}\left( \mathbb{R}^{n}\right) $%
\begin{equation*}
\left\Vert \tilde{\zeta}\Lambda ^{s}\zeta \mathbf{u}\right\Vert \leq
Cd_{0}^{\alpha \left( \varepsilon \right) }\left\{ \left\Vert \Lambda
_{x^{1}}^{\varepsilon }\tilde{\zeta}\Lambda ^{s}\zeta \mathbf{u}\right\Vert
+d_{0}^{-1}\left\Vert \zeta \mathbf{u}\right\Vert _{s-1}\right\} ,
\end{equation*}%
where $\alpha \left( \varepsilon \right) =1/\left( 4\log _{2}\left( \dfrac{1%
}{\varepsilon }+1\right) \right) $.
\end{lemma}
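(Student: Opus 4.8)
The plan is to reduce the assertion to a Fourier estimate in the block of variables $x^{1}$ alone, after first isolating from $v:=\tilde\zeta\Lambda^{s}\zeta\mathbf u$ a piece that is literally supported in a thin slab in $x^{1}$. Since $\zeta=\sigma_{1}(x^{1})\prod_{k\ge2}\sigma_{k}(x^{k})$ and $\tilde\sigma_{1}\equiv1$ on $\operatorname{support}\sigma_{1}$, I would use Lemma~\ref{composition} to commute $\Lambda^{s}$ past multiplication by $\sigma_{1}(x^{1})$ and write $v=\sigma_{1}(x^{1})G+E$, where $G:=\bigl(\prod_{k\ge2}\tilde\sigma_{k}\bigr)\Lambda^{s}\bigl(\prod_{k\ge2}\sigma_{k}\,\mathbf u\bigr)$ and $E=\tilde\zeta[\Lambda^{s},\sigma_{1}(x^{1})]\prod_{k\ge2}\sigma_{k}\mathbf u$ is an operator of order $s-1$ applied to $\zeta\mathbf u$. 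The principal symbol of $[\Lambda^{s},\sigma_{1}]$ carries a single derivative $\partial_{x^{1}}\sigma_{1}$, of size $d_{0}^{-1}$, so one obtains $\|E\|_{0}\le Cd_{0}^{-1}\|\zeta\mathbf u\|_{s-1}$ and, since $\varepsilon<1$, also $\|\Lambda^{\varepsilon}_{x^{1}}E\|_{0}\le Cd_{0}^{-1}\|\zeta\mathbf u\|_{s-1}$; these account for the error term in the lemma. The gain is that $\sigma_{1}(x^{1})G$ is supported, in the $x^{1}$ variables, in a ball of radius $\lesssim d_{0}$.

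The heart of the matter is then a model Poincar\'e estimate: for $w\in H^{s+1}(\mathbb R^{n})$ supported in $\{|x^{1}-x^{1}_{0}|\le d_{0}\}$ with $d_{0}\le\sqrt{\varepsilon/2}$,
\[
\|w\|_{0}\le Cd_{0}^{\alpha(\varepsilon)}\bigl(\|\Lambda^{\varepsilon}_{x^{1}}w\|_{0}+d_{0}^{-1}\|w\|_{-1}\bigr).
\]
I would prove this via the partial Fourier transform $\mathfrak{F}_{x^{1}}$, splitting the $\xi^{1}$–integral at a threshold $R$. For $|\xi^{1}|>R$ one has $\int|\mathfrak{F}_{x^{1}}w|^{2}\le R^{-2\varepsilon}\|\Lambda^{\varepsilon}_{x^{1}}w\|_{0}^{2}$; for $|\xi^{1}|\le R$ the slab support gives $|\mathfrak{F}_{x^{1}}w(\xi^{1},\overline{x^{1}})|\le Cd_{0}^{n_{1}/2}\|w(\cdot,\overline{x^{1}})\|_{L^{2}_{x^{1}}}$, whence a contribution $\le C(Rd_{0})^{n_{1}}\|w\|_{0}^{2}$ upon integrating in $\overline{x^{1}}$. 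A single split, optimized by absorption, only produces the exponent $\varepsilon$; since $\alpha(\varepsilon)>\varepsilon$ for small $\varepsilon$, in that regime one must iterate. I would run the decomposition over a geometric sequence of thresholds, each stage transferring a fixed fraction of $\|w\|_{0}^{2}$ into $\Lambda^{\varepsilon}_{x^{1}}$–controlled and $H^{-1}$–controlled parts, and then optimize the number of stages; the hypothesis $d_{0}\le\sqrt{\varepsilon/2}$ is exactly what guarantees that each stage yields a genuine gain and caps the number of useful stages at $\sim\log_{2}(1/\varepsilon)$, which is the source of the factor $4\log_{2}(\tfrac1\varepsilon+1)$ in $\alpha(\varepsilon)$.

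To finish, apply the model estimate to $w=\sigma_{1}(x^{1})G=v-E$; using $\Lambda^{\varepsilon}_{x^{1}}(\sigma_{1}G)=\Lambda^{\varepsilon}_{x^{1}}v-\Lambda^{\varepsilon}_{x^{1}}E$, the bounds on $E$, and $\|w\|_{-1}\le C\bigl(\|\zeta\mathbf u\|_{s-1}+\|\zeta\mathbf u\|_{s-2}\bigr)$ (obtained by commuting $\Lambda^{-1}$ through $\tilde\zeta\Lambda^{s}$), one arrives at $\|v\|_{0}\le Cd_{0}^{\alpha(\varepsilon)}\{\|\Lambda^{\varepsilon}_{x^{1}}v\|_{0}+d_{0}^{-1}\|\zeta\mathbf u\|_{s-1}\}$, the factor $d_{0}^{\alpha(\varepsilon)}$ in front of the $E$–terms being harmless because $d_{0}^{\alpha(\varepsilon)-1}\ge1$. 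The vector case is obtained component by component.

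The main obstacle is the iteration in the model estimate: a single frequency split yields only the exponent $\varepsilon$, which is too weak for small $\varepsilon$, and squeezing out the precise logarithmic exponent $\alpha(\varepsilon)=1/(4\log_{2}(\tfrac1\varepsilon+1))$ requires the iterated decomposition and a delicate optimization in which $d_{0}\le\sqrt{\varepsilon/2}$ plays the decisive role. A secondary nuisance is the $d_{0}$–bookkeeping in the commutator estimates, ensuring that only the single power $d_{0}^{-1}$ appears in the error term.
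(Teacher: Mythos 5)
Your reduction to a ``model estimate'' for slab-supported $w$ is where the proof actually lives, and that step is not carried out; worse, as formulated it cannot be carried out. You concede that one frequency split at a threshold $R\sim 1/d_{0}$ only yields the factor $d_{0}^{\varepsilon}$, and you propose to reach $d_{0}^{\alpha(\varepsilon)}$ by re-splitting over a geometric sequence of thresholds; but each such split pays the same $R^{-2\varepsilon}$ versus $(Rd_{0})^{n_{1}}\Vert w\Vert^{2}$ trade-off, so iterating it never improves the exponent beyond $\varepsilon$ --- there is no mechanism in your scheme by which the $H^{-1}$ term contributes the extra gain, and the asserted role of $d_{0}\le\sqrt{\varepsilon/2}$ (``capping the number of stages'') is a guess, not an argument. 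In fact the clean, $d_{0}$-uniform inequality you want, $\Vert w\Vert\le Cd_{0}^{\alpha(\varepsilon)}\bigl(\Vert\Lambda^{\varepsilon}_{x^{1}}w\Vert+d_{0}^{-1}\Vert w\Vert_{-1}\bigr)$ for all $w$ supported in a slab of width $d_{0}$, is false in the regime where $\alpha(\varepsilon)>\varepsilon$ (i.e.\ small $\varepsilon$, which is the relevant one): take $w(x^{1})=d_{0}^{-1/2}\phi(x^{1}/d_{0})$ with $\phi$ a fixed mean-zero bump; then $\Vert w\Vert=1$, $\Vert\Lambda^{\varepsilon}_{x^{1}}w\Vert\approx d_{0}^{-\varepsilon}$ and $\Vert w\Vert_{-1}\approx d_{0}$, so your right-hand side is $\approx d_{0}^{\alpha-\varepsilon}+d_{0}^{\alpha}\to0$ as $d_{0}\to0$. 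So no refinement of the threshold iteration can close this gap; whatever constants appear must be allowed to depend on the cutoffs (as in the paper's error terms $C\Vert\zeta\mathbf{u}\Vert_{s-1}$), and your bookkeeping, which tracks only explicit powers of $d_{0}$, does not accommodate that.

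The paper's mechanism is different and is what actually produces $\alpha(\varepsilon)$. One writes $\Vert v\Vert^{2}=(\Lambda^{\varepsilon}_{x^{1}}v,\Lambda^{-\varepsilon}_{x^{1}}v)$ and applies Young's inequality repeatedly, doubling the negative order at each step ($-\varepsilon,-3\varepsilon,\dots,-(2^{N}-1)\varepsilon$) until it drops below $-1$, with $N$ chosen so that $(2^{N}-1)\varepsilon\ge1>(2^{N-1}-1)\varepsilon$; at order $-1$ a \emph{single} classical Poincar\'e inequality in $x^{1}$ (after inserting $\zeta'$, at the price of $C\Vert\zeta\mathbf{u}\Vert_{s-1}$ errors with cutoff-dependent constants) converts $\Vert\Lambda^{-1}_{x^{1}}\Lambda^{\varepsilon}_{x^{1}}v\Vert$ into $d_{0}\Vert\Lambda^{\varepsilon}_{x^{1}}v\Vert$, i.e.\ a full power of $d_{0}$; finally the Young parameter is chosen as $16a^{2}=d_{0}^{2/N}$, and the hypothesis $d_{0}\le\sqrt{\varepsilon/2}$ serves precisely to ensure $d_{0}^{2/N}<1/\sqrt{2}$ so the accumulated factor $2a/(1-16a^{2})$ stays comparable to $d_{0}^{1/N}$. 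The logarithm in $\alpha(\varepsilon)$ thus comes from spreading one full power of $d_{0}$ over $N\approx\log_{2}(1/\varepsilon+1)$ Young steps, not from counting frequency splits. A secondary flaw in your setup: for $E=\tilde\zeta[\Lambda^{s},\sigma_{1}]\prod_{k\ge2}\sigma_{k}\mathbf{u}$ the quantity $\Vert\Lambda^{\varepsilon}_{x^{1}}E\Vert$ is of order $s-1+\varepsilon$, so it is controlled by $\Vert\zeta\mathbf{u}\Vert_{s-1+\varepsilon}$, not $\Vert\zeta\mathbf{u}\Vert_{s-1}$; ``since $\varepsilon<1$'' does not repair this, and the lower-order terms of the commutator carry higher powers of $d_{0}^{-1}$ than the single one you allow.
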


\begin{proof}
We can write 
\begin{equation*}
\left\Vert \tilde{\zeta}\Lambda ^{s}\zeta \mathbf{u}\right\Vert
^{2}=\sum\limits_{p=1}^{N}||\tilde{\zeta}\Lambda ^{s}\zeta
u_{p}||^{2}=\sum\limits_{p=1}^{N}\left( \Lambda _{x^{1}}^{\varepsilon }%
\tilde{\zeta}\Lambda ^{s}\zeta u_{p},\Lambda _{x^{1}}^{-\varepsilon }\tilde{%
\zeta}\Lambda ^{s}\zeta u_{p}\right)
\end{equation*}%
For simplicity, we write $u=u_{p}$. Using Young's inequality, we obtain 
\begin{equation*}
\left\Vert \tilde{\zeta}\Lambda ^{s}\zeta u\right\Vert ^{2}\leq a\left\Vert
\Lambda _{x^{1}}^{\varepsilon }\tilde{\zeta}\Lambda ^{s}\zeta u\right\Vert
^{2}+\frac{1}{4a}\left\Vert \Lambda _{x^{1}}^{-\varepsilon }\tilde{\zeta}%
\Lambda ^{s}\zeta u\right\Vert ^{2}
\end{equation*}%
For the second term on the right we similarly obtain 
\begin{equation*}
\left\Vert \Lambda _{x^{1}}^{-\varepsilon }\tilde{\zeta}\Lambda ^{s}\zeta
u\right\Vert ^{2}\leq 4a^{2}\left\Vert \Lambda _{x^{1}}^{\varepsilon }\tilde{%
\zeta}\Lambda ^{s}\zeta u\right\Vert ^{2}+\frac{1}{4^{2}a^{2}}\left\Vert
\Lambda _{x^{1}}^{-3\varepsilon }\tilde{\zeta}\Lambda ^{s}\zeta u\right\Vert
^{2}.
\end{equation*}%
Iterating $N$ times yields%
\begin{equation*}
\left\Vert \tilde{\zeta}\Lambda ^{s}\zeta u\right\Vert ^{2}\leq \frac{2a}{%
1-4^{2}a^{2}}\left\Vert \Lambda _{x^{1}}^{\varepsilon }\tilde{\zeta}\Lambda
^{s}\zeta u\right\Vert ^{2}+\frac{1}{\left( 4^{2}a^{2}\right) ^{N-1/2}}%
\left\Vert \Lambda _{x^{1}}^{-\left( 2^{N}-1\right) \varepsilon }\tilde{\zeta%
}\Lambda ^{s}\zeta u\right\Vert ^{2}.
\end{equation*}%
We choose an integer $N$ such that $\left( 2^{N}-1\right) \varepsilon \geq
1>\left( 2^{N-1}-1\right) \varepsilon $, that is,%
\begin{equation}
N-1<\log _{2}\left( \dfrac{1}{\varepsilon }+1\right) \leq N.
\label{ineq-P-0}
\end{equation}%
Using Poincar\'{e}'s inequality with respect to $x^{1}$, and the
monotonicity of the $||\cdot ||_{s}$ norms we have%
\begin{eqnarray*}
\left\Vert \Lambda _{x^{1}}^{-\left( 2^{N}-1\right) \varepsilon }\tilde{\zeta%
}\Lambda ^{s}\zeta u\right\Vert ^{2} &\leq &\left\Vert \Lambda
_{x^{1}}^{-1}\Lambda _{x^{1}}^{\varepsilon }\tilde{\zeta}\Lambda ^{s}\zeta
u\right\Vert ^{2}\leq \left\Vert \zeta ^{\prime }\Lambda
_{x^{1}}^{-1}\Lambda _{x^{1}}^{\varepsilon }\tilde{\zeta}\Lambda ^{s}\zeta
u\right\Vert ^{2}+C\left\Vert \zeta u\right\Vert _{s-1}^{2} \\
&\leq &d_{0}^{2}\left\Vert \nabla _{x^{1}}\zeta ^{\prime }\Lambda
_{x^{1}}^{-1}\Lambda _{x^{1}}^{\varepsilon }\tilde{\zeta}\Lambda ^{s}\zeta
u\right\Vert ^{2}+C\left\Vert \zeta u\right\Vert _{s-1}^{2} \\
&\leq &d_{0}^{2}\left\Vert \Lambda _{x^{1}}^{\varepsilon }\tilde{\zeta}%
\Lambda ^{s}\zeta u\right\Vert ^{2}+C\left\Vert \zeta u\right\Vert
_{s-1}^{2}.
\end{eqnarray*}%
Therefore,%
\begin{equation*}
\left\Vert \tilde{\zeta}\Lambda ^{s}\zeta u\right\Vert ^{2}\leq \left( 
\dfrac{2a}{1-4^{2}a^{2}}+\frac{d_{0}^{2}}{\left( 4^{2}a^{2}\right) ^{N-1/2}}%
\right) \left\Vert \Lambda _{x^{1}}^{\varepsilon }\tilde{\zeta}\Lambda
^{s}\zeta u\right\Vert ^{2}+\frac{C}{\left( 4^{2}a^{2}\right) ^{N-1/2}}%
\left\Vert \zeta u\right\Vert _{s-1}^{2}.
\end{equation*}%
We take $4^{2}a^{2}=d_{0}^{2/N}$, note that since $d_{0}^{2}\leq \varepsilon
/2$, and $2/N>1/\log _{2}\left( \dfrac{1}{\varepsilon }+1\right) $, we have%
\begin{equation*}
d_{0}^{2/N}\leq \left( \dfrac{\varepsilon }{2}\right) ^{\frac{1}{N}}<\left( 
\dfrac{\varepsilon }{2}\right) ^{\frac{1}{2\log _{2}\left( \frac{1}{%
\varepsilon }+1\right) }}<\left( \dfrac{1}{\frac{1}{\varepsilon }+1}\right)
^{\frac{1}{2\log _{2}\left( \frac{1}{\varepsilon }+1\right) }}=\dfrac{1}{%
\sqrt{2}}.
\end{equation*}%
Thus, we obtain$.$%
\begin{equation*}
\left\Vert \tilde{\zeta}\Lambda ^{s}\zeta u\right\Vert \leq Cd_{0}^{1/\left(
2N\right) }\left( \left\Vert \Lambda _{x^{1}}^{\varepsilon }\tilde{\zeta}%
\Lambda ^{s}\zeta u\right\Vert +d_{0}^{-1}\left\Vert \zeta u\right\Vert
_{s-1}\right) .
\end{equation*}%
Because of (\ref{ineq-P-0}) this implies the result in the lemma with $%
\alpha \left( \varepsilon \right) =1/\left( 4\log _{2}\left( \dfrac{1}{%
\varepsilon }+1\right) \right) $.
\end{proof}

\begin{lemma}
\label{main_apriori}There exist $\varepsilon ,C>0$ such that for all $%
\mathbf{u}\in \prod_{k=1}^{N}H^{s-1}(\mathbb{R}^{n})$ and all $0<\delta \leq
1$%
\begin{equation}
\begin{tabular}{rcl}
$\left\Vert S_{\delta }\zeta \mathbf{u}\right\Vert _{s}^{2}$ & $\leq $ & $%
Cd_{0}^{\alpha \left( \varepsilon \right) }\left\vert \left( \tilde{\zeta}%
\Lambda ^{s}S_{\delta }\tilde{\zeta}\mathbf{Lu},\tilde{\zeta}\Lambda
^{s}S_{\delta }\zeta \mathbf{u}\right) \right\vert $ \\ 
&  & $Cd_{0}^{\alpha \left( \varepsilon \right) }\left\vert \left( \zeta
^{\prime }\Lambda ^{s-\varepsilon }S_{\delta }\zeta ^{\prime }\mathbf{Lu}%
,\zeta ^{\prime }\Lambda ^{s-\varepsilon }S_{\delta }\tilde{\zeta}\mathbf{u}%
\right) \right\vert $ \\ 
&  & $+\left( Cd_{0}^{-2\left( 1-\alpha \left( \varepsilon \right) \right)
}+1\right) \left\Vert S_{\delta }\zeta ^{\prime \prime }\mathbf{u}%
\right\Vert _{s-\varepsilon }^{2},$%
\end{tabular}
\label{prop1}
\end{equation}%
where $d_{0},\alpha \left( \varepsilon \right) $ are as in Lemma \ref%
{lemma-Poinc}. We also have that%
\begin{equation}
\left\Vert S_{\delta }\zeta \mathbf{u}\right\Vert _{s}\leq C\left\{
d_{0}^{2\alpha \left( \varepsilon \right) }\left\Vert S_{\delta }\zeta
^{\prime }\mathbf{Lu}\right\Vert _{s}^{2}+\left( d_{0}^{-2\left( 1-\alpha
\left( \varepsilon \right) \right) }+1\right) \left\Vert S_{\delta }\zeta
^{\prime \prime }\mathbf{u}\right\Vert _{s-\varepsilon }^{2}\right\} .
\label{prop2}
\end{equation}
\end{lemma}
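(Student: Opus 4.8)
\emph{Plan.} I would derive (\ref{prop1}) by chaining Corollary~\ref{cor_hard} and Lemma~\ref{rem_term2} into the Poincar\'e-type inequality Lemma~\ref{lemma-Poinc}, and then obtain (\ref{prop2}) from (\ref{prop1}) by Cauchy--Schwarz and Young's inequality. Lemma~\ref{rem_term2} will serve to remove the $\lambda_k$-weighted remainder $\sum_k\left\Vert\lambda_k\zeta_0^k S_\delta\zeta_0^k\mathbf u\right\Vert_s^2$ appearing on the right of Corollary~\ref{cor_hard}, and Lemma~\ref{lemma-Poinc}, combined with the hypothesis $\lambda_1\equiv1$, will supply the small prefactor $d_0^{\alpha(\varepsilon)}$ that makes the leading error term absorbable.

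\emph{First step.} Since $\lambda_1\equiv1$ (take $\varepsilon=\min_k\varepsilon_k$), the quantity $\left\Vert\Lambda_{x^1}^\varepsilon\tilde\zeta\Lambda^s S_\delta\zeta\mathbf u\right\Vert^2$ is among the summands on the left of Corollary~\ref{cor_hard}, so that corollary controls it by a constant times
\[
\left\Vert\tilde\zeta S_\delta\zeta\mathbf u\right\Vert_s^2+\left|(\tilde\zeta\Lambda^s S_\delta\tilde\zeta\mathbf{Lu},\tilde\zeta\Lambda^s S_\delta\zeta\mathbf u)\right|+\sum_{k=1}^m\left\Vert\lambda_k\zeta_0^k S_\delta\zeta_0^k\mathbf u\right\Vert_s^2+\left\Vert\zeta'S_\delta\zeta'\mathbf u\right\Vert_{s-1/2}^2 .
\]
I would then apply Lemma~\ref{rem_term2} to the third term, and, assuming $\varepsilon<1/2$ (so that the indices $s-\tfrac12$, $s-1$, $s-2$ arising in the various commutator remainders are all $\le s-\varepsilon$), collapse the leftover lower-order pieces into $\left\Vert S_\delta\zeta''\mathbf u\right\Vert_{s-\varepsilon}^2$ after the usual manipulations moving cutoffs through $\Lambda^s$ and $S_\delta$. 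This gives
\[
\left\Vert\Lambda_{x^1}^\varepsilon\tilde\zeta\Lambda^s S_\delta\zeta\mathbf u\right\Vert^2\le C\Big\{\left\Vert\tilde\zeta S_\delta\zeta\mathbf u\right\Vert_s^2+|\mathrm{I}_1|+|\mathrm{I}_2|+\left\Vert S_\delta\zeta''\mathbf u\right\Vert_{s-\varepsilon}^2\Big\},
\]
where $\mathrm{I}_1=(\tilde\zeta\Lambda^s S_\delta\tilde\zeta\mathbf{Lu},\tilde\zeta\Lambda^s S_\delta\zeta\mathbf u)$ and $\mathrm{I}_2=(\zeta'\Lambda^{s-\varepsilon}S_\delta\zeta'\mathbf{Lu},\zeta'\Lambda^{s-\varepsilon}S_\delta\tilde\zeta\mathbf u)$.

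\emph{Second step.} As $\mathbf u\in\prod_{k=1}^{N}H^{s-1}(\mathbb{R}^n)$ we have $S_\delta\zeta\mathbf u\in\prod_{k=1}^{N}H^{s+2}(\mathbb{R}^n)$, so Lemma~\ref{lemma-Poinc} may be applied with $S_\delta\zeta\mathbf u$ in the role of its function; choosing $\sigma_1$ with $x^1$-support of diameter $d_0\le\sqrt{\varepsilon/2}$ and moving the cutoffs through $\Lambda^s$, $\Lambda_{x^1}^\varepsilon$ and $S_\delta$ (discarded terms being of order $\le s-\varepsilon$, by Lemma~\ref{properties_S}(\ref{4_S}) and standard calculus) one gets
\[
\left\Vert S_\delta\zeta\mathbf u\right\Vert_s\le Cd_0^{\alpha(\varepsilon)}\left\Vert\Lambda_{x^1}^\varepsilon\tilde\zeta\Lambda^s S_\delta\zeta\mathbf u\right\Vert+Cd_0^{\alpha(\varepsilon)-1}\left\Vert S_\delta\zeta''\mathbf u\right\Vert_{s-\varepsilon} .
\]
Squaring, substituting the first-step bound, and using $\left\Vert\tilde\zeta S_\delta\zeta\mathbf u\right\Vert_s\le C\left\Vert S_\delta\zeta\mathbf u\right\Vert_s$, I would arrive at
\[
\left\Vert S_\delta\zeta\mathbf u\right\Vert_s^2\le Cd_0^{2\alpha(\varepsilon)}\left\Vert S_\delta\zeta\mathbf u\right\Vert_s^2+Cd_0^{2\alpha(\varepsilon)}(|\mathrm{I}_1|+|\mathrm{I}_2|)+\big(Cd_0^{-2(1-\alpha(\varepsilon))}+1\big)\left\Vert S_\delta\zeta''\mathbf u\right\Vert_{s-\varepsilon}^2 ,
\]
and then shrink $d_0$ so that $Cd_0^{2\alpha(\varepsilon)}<\tfrac12$ (possible since $\alpha(\varepsilon)>0$) to absorb the first term on the left; as $d_0<1$ gives $d_0^{2\alpha(\varepsilon)}\le d_0^{\alpha(\varepsilon)}$, this is exactly (\ref{prop1}). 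Finally, for (\ref{prop2}) I would estimate $|\mathrm{I}_1|\le C\left\Vert S_\delta\zeta'\mathbf{Lu}\right\Vert_s\left\Vert S_\delta\zeta\mathbf u\right\Vert_s$ and $|\mathrm{I}_2|\le C\left\Vert S_\delta\zeta'\mathbf{Lu}\right\Vert_s\left\Vert S_\delta\zeta''\mathbf u\right\Vert_{s-\varepsilon}$ by Cauchy--Schwarz (discarding order-$\le s-\varepsilon$ cutoff-commutator terms into the last norm), and then use Young's inequality to move $\left\Vert S_\delta\zeta\mathbf u\right\Vert_s$ to the left, which turns the $d_0^{\alpha(\varepsilon)}$ prefactor into $d_0^{2\alpha(\varepsilon)}$ on the $\mathbf{Lu}$-norm; after absorption this yields (\ref{prop2}).

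\emph{Main obstacle.} The crux is the absorption in the second step: it goes through only because the coefficient $Cd_0^{2\alpha(\varepsilon)}$ of $\left\Vert\tilde\zeta S_\delta\zeta\mathbf u\right\Vert_s^2$ can be pushed below $\tfrac12$, and this is available only because Lemma~\ref{lemma-Poinc} trades the $\Lambda_{x^1}^\varepsilon$-gain furnished by Corollary~\ref{cor_hard} — present precisely thanks to $\lambda_1\equiv1$ — for the power $d_0^{\alpha(\varepsilon)}$ of the freely shrinkable $x^1$-support diameter. The rest is book-keeping, but not entirely routine: one must check that every commutator with $S_\delta$ (via Lemma~\ref{properties_S}(\ref{4_S})) and every product of disjointly supported cutoffs against a pseudodifferential operator still retains an $S_\delta$ and produces a Sobolev norm of order $\le s-\varepsilon$, so that all such remainders genuinely fall inside $\left\Vert S_\delta\zeta''\mathbf u\right\Vert_{s-\varepsilon}^2$; this is where the hypothesis $\varepsilon<\tfrac12$ is used.
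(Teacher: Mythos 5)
Your proposal is correct and follows essentially the same route as the paper: chaining Corollary \ref{cor_hard} (which uses $\lambda_1\equiv 1$) and Lemma \ref{rem_term2} with the Poincar\'e-type Lemma \ref{lemma-Poinc}, then absorbing the $d_0^{\alpha(\varepsilon)}\left\Vert S_{\delta}\zeta\mathbf{u}\right\Vert_s^2$ term, and getting (\ref{prop2}) from (\ref{prop1}) by Cauchy--Schwarz/Young and a second absorption. The only differences are cosmetic: you apply Lemma \ref{rem_term2} before the Poincar\'e step rather than after, and you justify the absorption by shrinking the support diameter $d_0$ rather than the paper's ``taking $\varepsilon$ small enough,'' which is an equally valid (arguably cleaner) reading of the same mechanism.
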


\begin{proof}
We have%
\begin{eqnarray*}
\left\Vert S_{\delta }\zeta \mathbf{u}\right\Vert _{s}^{2} &=&\left\Vert
\Lambda ^{s}S_{\delta }\tilde{\zeta}^{2}\zeta \mathbf{u}\right\Vert ^{2}\leq
\left\Vert \tilde{\zeta}\Lambda ^{s}S_{\delta }\tilde{\zeta}\zeta \mathbf{u}%
\right\Vert ^{2}+C\left\Vert S_{\delta }\zeta \mathbf{u}\right\Vert
_{s-1}^{2} \\
&\leq &\left\Vert \tilde{\zeta}\Lambda ^{s}\zeta S_{\delta }\tilde{\zeta}%
\mathbf{u}\right\Vert ^{2}+C\left\Vert S_{\delta }\tilde{\zeta}\mathbf{u}%
\right\Vert _{s-1}^{2}.
\end{eqnarray*}

We apply the Poincar\'{e} inequality, Lemma \ref{lemma-Poinc}, to the first
term on the right,%
\begin{eqnarray*}
\left\Vert S_{\delta }\zeta \mathbf{u}\right\Vert _{s}^{2} &\leq
&Cd_{0}^{\alpha \left( \varepsilon \right) }\left\Vert \Lambda
_{x^{1}}^{\varepsilon }\tilde{\zeta}\Lambda ^{s}\zeta S_{\delta }\tilde{\zeta%
}\mathbf{u}\right\Vert +C\left( d_{0}^{\alpha \left( \varepsilon \right)
-1}+1\right) \left\Vert S_{\delta }\tilde{\zeta}\mathbf{u}\right\Vert _{s-1}
\\
&\leq &Cd_{0}^{\alpha \left( \varepsilon \right) }\left\Vert \Lambda
_{x^{1}}^{\varepsilon }\tilde{\zeta}\Lambda ^{s}S_{\delta }\zeta \mathbf{u}%
\right\Vert +C\left( d_{0}^{\alpha \left( \varepsilon \right) -1}+1\right)
\left\Vert S_{\delta }\tilde{\zeta}\mathbf{u}\right\Vert _{s-1}
\end{eqnarray*}%
Since $\lambda _{1}\equiv 1$, by Corollary \ref{cor_hard} we then have%
\begin{eqnarray*}
\left\Vert S_{\delta }\zeta \mathbf{u}\right\Vert _{s}^{2} &\leq
&Cd_{0}^{\alpha \left( \varepsilon \right) }\left\{ \left\Vert S_{\delta
}\zeta \mathbf{u}\right\Vert _{s}^{2}+\left\vert \left( \tilde{\zeta}\Lambda
^{s}S_{\delta }\tilde{\zeta}\mathbf{Lu},\tilde{\zeta}\Lambda ^{s}S_{\delta
}\zeta \mathbf{u}\right) \right\vert +\sum\limits_{k=1}^{m}\left\Vert
\lambda _{k}\zeta _{0}^{k}S_{\delta }\zeta _{0}^{k}\mathbf{u}\right\Vert
_{s}^{2}\right\} \\
&&+C\left( d_{0}^{\alpha \left( \varepsilon \right) -1}+1\right) \left\Vert
S_{\delta }\zeta ^{\prime }\mathbf{u}\right\Vert _{s-1}.
\end{eqnarray*}%
Taking $\varepsilon $ small enough (we assume that at least $\varepsilon
\leq 1$), we absorb the first term on the right into the left, and apply
Lemma \ref{rem_term2} to the third term on the right. We get%
\begin{eqnarray*}
\left\Vert S_{\delta }\zeta \mathbf{u}\right\Vert _{s}^{2} &\leq
&Cd_{0}^{\alpha \left( \varepsilon \right) }\left\{ \left\vert \left( \tilde{%
\zeta}\Lambda ^{s}S_{\delta }\tilde{\zeta}\mathbf{Lu},\tilde{\zeta}\Lambda
^{s}S_{\delta }\zeta \mathbf{u}\right) \right\vert +\left\vert \left( \zeta
^{\prime }\Lambda ^{s-\varepsilon }S_{\delta }\zeta ^{\prime }\mathbf{Lu}%
,\zeta ^{\prime }\Lambda ^{s-\varepsilon }S_{\delta }\tilde{\zeta}\mathbf{u}%
\right) \right\vert \right\} \\
&&+C\left( d_{0}^{\alpha \left( \varepsilon \right) -1}+1\right) \left\Vert
S_{\delta }\zeta ^{\prime \prime }\mathbf{u}\right\Vert _{s-\varepsilon }.
\end{eqnarray*}%
This proves (\ref{prop1}). The second inequality, (\ref{prop2}), follows
from (\ref{prop1}) after another absorption into the left.
\end{proof}

\section{Proof of Theorems \protect\ref{main} and \protect\ref{main_s}\label%
{hypoellipticity}}

Suppose $\mathbf{u}$ is a distribution on $\left( \mathbb{R}^{n}\right) ^{N}$
such that $\mathbf{u}\in \prod_{k=1}^{N}H^{s-1}(\mathbb{R}^{n})$. If
moreover $\zeta \mathbf{Lu}\in H^{s}\left( \mathbb{R}^{n}\right) $ for all $%
\zeta \in C_{0}^{\infty }\left( U\right) $, then, by (\ref{prop2}), for all $%
0<\delta \leq 1$ we have%
\begin{equation*}
\left\Vert S_{\delta }\zeta \mathbf{u}\right\Vert _{s}\leq Cd_{0}^{2\alpha
\left( \varepsilon \right) }\left\Vert S_{\delta }\zeta ^{\prime }\mathbf{Lu}%
\right\Vert _{s}^{2}+C\left( d_{0}^{-2\left( 1-\alpha \left( \varepsilon
\right) \right) }+1\right) \left\Vert S_{\delta }\zeta ^{\prime \prime }%
\mathbf{u}\right\Vert _{s-\varepsilon }^{2},
\end{equation*}%
where the cutoff functions $\zeta \prec \zeta ^{\prime }\prec \zeta ^{\prime
\prime }$ are supported in $U$ and the constants $C$, $\alpha $, $d_{0}$ do
not depend on $\delta $. Letting $\delta \rightarrow 0^{+}$, by (\ref{3_S})
in Lemma \ref{properties_S} we obtain%
\begin{equation}
\left\Vert \zeta \mathbf{u}\right\Vert _{s}\leq Cd_{0}^{2\alpha \left(
\varepsilon \right) }\left\Vert \zeta ^{\prime }\mathbf{Lu}\right\Vert
_{s}^{2}+C\left( d_{0}^{-2\left( 1-\alpha \left( \varepsilon \right) \right)
}+1\right) \left\Vert \zeta ^{\prime \prime }\mathbf{u}\right\Vert
_{s-\varepsilon }^{2},  \label{main-pf-01}
\end{equation}%
This inequality suffices to prove the hypoellipticity, without loss of
derivatives, of $\mathbf{L}$. Indeed, since $\zeta ^{\prime \prime }\mathbf{u%
}$ is a compactly supported distribution there exists $s_{0}\in \mathbb{R}$
such that $\left\Vert \varphi \mathbf{u}\right\Vert _{s_{0}}\leq \infty $
for all $\varphi \in C_{0}^{\infty }\left( U\right) $. If $s_{0}\geq
s-\varepsilon $ then (\ref{main-pf-01}) implies that $\zeta \mathbf{u}\in
H^{s}\left( \mathbb{R}^{n}\right) $ for all $\zeta \in C_{0}^{\infty }\left(
U\right) $. On the other hand, if $s_{0}<s-\varepsilon $, let $N$ be the
positive integer such that $N\varepsilon \leq s-s_{0}<\left( N+1\right)
\varepsilon $. Let $\zeta =\zeta ^{0}\prec \zeta ^{1}\prec \zeta ^{2}\prec
\cdots \prec \zeta ^{2N}$ be a sequence of cutoff functions supported in $U$%
. Iterating (\ref{main-pf-01})we obtain%
\begin{equation*}
\left\Vert \zeta ^{2\left( j-1\right) }\mathbf{u}\right\Vert _{s-\left(
j-1\right) \varepsilon }^{2}\leq Cd_{0}^{2\alpha \left( \varepsilon \right)
}\left\Vert \zeta ^{2j-1}\mathbf{Lu}\right\Vert _{s-\left( j-1\right)
\varepsilon }^{2}+C\left( d_{0}^{-2\left( 1-\alpha \left( \varepsilon
\right) \right) }+1\right) \left\Vert \zeta ^{2j}\mathbf{u}\right\Vert
_{s-j\varepsilon }^{2},
\end{equation*}%
$\qquad $\ for $j=1,\dots ,N$. Assembling these estimates yields%
\begin{eqnarray*}
\left\Vert \zeta \mathbf{u}\right\Vert _{s} &\leq &Cd_{0}^{2\alpha \left(
\varepsilon \right) }\sum_{j=1}^{N}\left( d_{0}^{-2\left( 1-\alpha \left(
\varepsilon \right) \right) }+1\right) ^{j-1}\left\Vert \zeta ^{2j-1}\mathbf{%
Lu}\right\Vert _{s-\left( 2j-1\right) \varepsilon }^{2} \\
&&+C\left( d_{0}^{-2\left( 1-\alpha \left( \varepsilon \right) \right)
}+1\right) ^{2N}\left\Vert \zeta ^{2N}\mathbf{u}\right\Vert _{s-N\varepsilon
}^{2}.
\end{eqnarray*}%
By the monotonicity of the $H^{s}$-norms, and since $s_{0}\leq
s-N\varepsilon $, it follows that%
\begin{equation*}
\begin{array}{rcl}
\left\Vert \zeta \mathbf{u}\right\Vert _{s} & \leq  & CNd_{0}^{2\alpha
\left( \varepsilon \right) }\left( d_{0}^{-2\left( 1-\alpha \left(
\varepsilon \right) \right) }+1\right) ^{N-1}\left\Vert \zeta ^{2N-1}\mathbf{%
Lu}\right\Vert _{s-\varepsilon }^{2} \\ 
&  & +C\left( d_{0}^{-2\left( 1-\alpha \left( \varepsilon \right) \right)
}+1\right) ^{2N}\left\Vert \zeta ^{2N}\mathbf{u}\right\Vert
_{s_{0}}^{2}<\infty 
\end{array}%
\end{equation*}%
Hence $\zeta \mathbf{u}\in H^{s}\left( \mathbb{R}^{n}\right) $ for all $%
\zeta \in C_{0}^{\infty }\left( U\right) $ and this finishes the proof. $%
\hfill \square $


\begin{thebibliography}{99}
\bibitem{Fedii} V. S. Fedi\u{\i}, On a criterion for hypoellipticity, \emph{%
Mat. Sb.} \textbf{14} (1971), 15--45.

\bibitem{Fef} C. Fefferman, D. H. Phong, Subelliptic eigenvalue problems, 
\emph{Conf. in Honor of A. Zygmund}, Wadsworth Math. Series 1981.

\bibitem{Hor} L. H\"{o}rmander, Hypoelliptic second order differential
equations, \emph{Acta. Math.} \textbf{119} (1967), 141--171.

\bibitem{Horm} L. H\"{o}rmander, The Analysis of Linear Partial Differential
Operators I-IV, \emph{Springer-Verlag, Berlin}, 1983.

\bibitem{Kohn} J. J. Kohn, Hypoellipticity of some degenerate subelliptic
operators, \emph{J. Funct. Anal.} \textbf{159} (1998), no. 1, 203--216.

\bibitem{Kus} S. Kusuoka and D. Strook, Applications of the Mallavain
calculus II, \emph{J. Fac. Sci. Univ. Tokyo Sect. IA Math.} \textbf{32}
(1985), no. 1, 1--76.

\bibitem{Lad} O. A. Ladyzhenskaya, N. N. Uraltseva, Linear and Quasilinear
Elliptic Equations, \emph{New York, Academic Press}, 1968.

\bibitem{Mor0} Morimoto, Y., On the hypoellipticity for infinitely
degenerate semi-elliptic operators, Journal of the Mathematical Society of
Japan, 1978, \textbf{30}, no.2, 327--358.

\bibitem{Mor} Morimoto, Y., Hypoellipticity for infinitely degenerate
elliptic operators, \emph{Osaka J. Math.} \textbf{24} (1987), no. 1, 13--35.

\bibitem{Mor1} Morimoto, Y., A criterion for hypoellipticity of second order
differential operators, \emph{Osaka J. Math.} \textbf{24} (1987), no. 1,
661--675.

\bibitem{Mor3} Morimoto, Y. and Xu, C.J, Nonlinear hypoellipticity of
infinite, \emph{Funkcialaj Ekvacioj}, \textbf{50} (2007), no. 1, 33--65.

\bibitem{Mor4} Morimoto, Y. and Xu, C.J, Hypoellipticity for a class of
kinetic equations, \emph{Kyoto Journal of Mathematics}, 2007, 47, no.1,
129--152.

\bibitem{Morio} Morioka, T, Hypoellipticity for some infinitely degenerate
operators of second order, \emph{J. Math. Kyoto Univ.} 32-2, 1992, 373-386.

\bibitem{Rios1} C. Rios, E. T. Sawyer, R. L. Wheeden, A higher-dimensional
partial Legendre transform, and regularity of degenerate Monge-Amp\`{e}re
equations, \emph{Adv. Math.} \textbf{193} (2005), no. 2, 373---415.

\bibitem{Rios2} C. Rios, E. T. Sawyer, R. L. Wheeden, A priori estimates for
infinitely degenerate quasilinear equations, \emph{Differential and Integral
Equations-Athens}, \ 2008, 21, no.1, 131--200.

\bibitem{Rios} C. Rios, E. Sawyer, R. Wheeden, Hypoellipticity for
infinitely degenerate quasilinear equations and the Dirichlet problem, \emph{%
To appear Journal d'Analyse Mathematique} (2012).

\bibitem{Saw0} E. Sawyer and R. Wheeden, A priori estimates for quasilinear
equations related to the Monge-Amp\`{e}re equation in two dimensions, \ 
\emph{Journal d'Analyse Math\`{e}matique}, 2005, 97, no.1, 257--316.

\bibitem{Saw} E. Sawyer and R. Wheeden, Regularity of degenerate Monge-Amp%
\`{e}re and prescribed Gaussian curvature equations in two dimensions, \emph{%
Potential Analysis}, \textbf{24} (2006), no. 3, 267--301.

\bibitem{Tay} M. E. Taylor, Pseudodifferential operators and nonlinear PDE,
Progress in Mathematics \textbf{100}. \emph{Birkh\"{a}user Boston, Inc.},
Boston, MA, 1991.

\bibitem{Tri} Tri, N.M, Semilinear hypoelliptic differential operators with
multiple characteristics, \emph{Transactions of the American Mathematical
Society}, 2008, \textbf{360}, no. 7, 3875--3907.

\bibitem{Xu} Xu, C.J. and Zuily, C, Higher interior regularity for
quasilinear subelliptic systems, \emph{Calculus of Variations and Partial
Differential Equations}, 1997, \textbf{5}, no.4, 323--343.
\end{thebibliography}
\end{document}